\newcommandx{\ebltodo}[2][1=]{\todo[linecolor=red,backgroundcolor=red!25,bordercolor=red,#1]{#2}}
{
  \color{olive}%
}%
{}
\DeclareMathAlphabet{\mathpzc}{OT1}{pzc}{m}{it}
\numberwithin{equation}{section}
\newtheorem{thm}[subsection]{Theorem}
\newtheorem{coro}[subsection]{Corollary}
\newtheorem*{cor*}{Corollary}
\newtheorem{lemma}[subsection]{Lemma}
\newtheorem*{sublemma}{Sublemma}
\newtheorem{propos}[subsection]{Proposition}
\newtheorem*{thm*}{Theorem}
\newtheorem*{thma*}{Theorem A}
\newtheorem*{thmb*}{Theorem B}
\newtheorem*{thmc*}{Theorem C}
\theoremstyle{definition}
\newtheorem{definition}[subsection]{Definition}
\newcounter{consta}
\newcounter{constk}
\newcounter{constc}
\newcounter{constE}
\newcounter{constd}
\newcommand*\bigcdot{\mathpalette\bigcdot@{.5}}
\newcommand*\bigcdot@[2]{\mathbin{\vcenter{\hbox{\scalebox{#2}{$\m@th#1\bullet$}}}}}
\def\XXint#1#2#3{{\setbox0=\hbox{$#1{#2#3}{\int}$ }
\vcenter{\hbox{$#2#3$ }}\kern-.6\wd0}}
\DeclareMathOperator{\Lip}{Lip}
\DeclareMathOperator{\supp}{supp}
\DeclareMathOperator{\diff}{d}
\DeclareMathOperator\Mat{Mat}
\newcommand\vol{{\rm{vol}}}
\newcommand\SL{{\rm{SL}}}
\newcommand\GL{{\rm{GL}}}
\newcommand\PGL{{\rm{PGL}}}
\newcommand\SO{{\rm{SO}}}
\newcommand\Lie{{\rm Lie}}
\def\bbz{\mathbb{Z}}
\def\bbq{\mathbb{Q}}
\def\bbr{\mathbb{R}}
\def\bbn{\mathbb{N}}
\def\Q{\bbq}
\def\Z{\bbz}
\def\R{\bbr}
\def\N{\bbn}
\def\pcal{\mathcal{P}}
\def\hfrak{\mathfrak{h}}
\def\gfrak{\mathfrak{g}}
\def\vare{\varepsilon}
\def\zg0{Z_{G_\omega}(s)}
\def\zg{Z_G(s)}
\def\be{\begin{equation}}
\def\ee{\end{equation}}
\def\dist{{\rm dist}}
\def\Sob{{\mathcal S}}
\def\dist{d}
\newcommand {\absolute}[1] {\left| {#1} \right|}
\newcommand {\norm}[1] {\left\| {#1} \right\|}
\newcommand{\hide}[1]{}
\newcommand{\sqf}{Q_0}
\newcommand{\cox}{{\mathsf x}}
\newcommand{\coy}{{\mathsf y}}
\newcommand{\coz}{{\mathsf z}}
\newcommand{\torus}{\mathsf M}
\newcommand\ave{\int_{0}^{2\pi}}
\newcommand\rot{r}
\newcommand{\dat}{\Delta(a_t)}
\newcommand{\drot}{\Delta(\rot_\theta)}
\newcommand{\darot}{\Delta(a_t\rot_\theta)}
\newcommand{\Nt}{\mathcal N_t}
\newcommand\ball{\mathsf D}
\begin{document}
\title[Local statistics of the spectrum of a flat torus]{Quantitative equidistribution and the local statistics of the spectrum of a flat torus}

\author{E.~Lindenstrauss}
\address{E.L.: The Einstein Institute of Mathematics, Edmond J.\ Safra Campus, 
Givat Ram, The Hebrew University of Jerusalem, Jerusalem, 91904, Israel}
\email{elon.bl@mail.huji.ac.il}
\thanks{E.L.\ acknowledges support by ERC 2020 grant HomDyn (grant no.~833423).}

\author{A.~Mohammadi}
\address{A.M.: Department of Mathematics, University of California, San Diego, CA 92093}
\email{ammohammadi@ucsd.edu}
\thanks{A.M.\ acknowledges support by the NSF, grants DMS-1764246 and 2055122.}

\author{Z.~Wang}
\address{Z.W.: Pennsylvania State University,
Department of Mathematics, University Park, PA 16802}
\email{zhirenw@psu.edu}
\thanks{Z.W.\ acknowledges support by the NSF, grant  DMS-1753042.}

\begin{abstract}
We show that pair correlation function for the spectrum of a flat $2$-dimensional torus satisfying an 
explicit Diophantine condition agrees with those of a Poisson process with a polynomial error rate.

The proof is based on a quantitative equidistribution theorem and tools from geometry of numbers.  
\end{abstract}

\maketitle

\setcounter{tocdepth}{1}
\tableofcontents

\section{Introduction}

Let $\Delta\subset\R^2$ be a lattice. The eigenvalues of the Laplacian  of the corresponding flat torus $\torus=\R^2/\Delta$ are the values of the quadratic form 
\be\label{eq: def B-Delta}
B_\torus(\cox,\coy)=4\pi^2\norm{\cox v_1+\coy v_2}^2
\ee
at integer points, where $\{v_1,v_2\}$ is a basis for the dual lattice $\Delta^*$.

Let
\[0
=\lambda_0<\lambda_1\leq\lambda_2\cdots
\]
be the corresponding eigenvalues counted with multiplicity. 
By the Weyl's law we have
\[
\#\{j: \lambda_j\leq T\}\sim \tfrac{\vol(\torus)}{4\pi} T.
\]
Let $\alpha<\beta$, and define the {\it pair correlation function}
\[
R_{\torus}(\alpha,\beta,T)=\frac{\#\{(j,k): j\neq k, \lambda_j,\lambda_k\leq T,\; \alpha\leq\lambda_j-\lambda_k\leq \beta\}}{T}.
\]

The following was proved by Eskin, Margulis, and Mozes~\cite{EMM-22}.

\begin{thm}[\cite{EMM-22}, Theorem 1.7]\label{thm: EMM flat torus}
Let $\torus$ be a two dimensional flat torus, and let
\[
B_\torus(\cox,\coy)=\mathsf a\cox^2+2\mathsf b \cox\coy+ \mathsf c\coy^2
\]
be the associated quadratic form giving the Laplacian spectrum of $\torus$, normalized so that $\mathsf a\mathsf c-\mathsf b^2=1$.
Suppose there exist $A\geq 1$ such that for all $(p_1,p_2,q)\in\Z^3$ with $q\geq 2$, we have 
\be\label{eq: EMM Diop cond}
\Bigl|\tfrac{\mathsf b}{\mathsf a}-\tfrac{p_1}{q}\Bigr|+ \Bigl|\tfrac{\mathsf c}{\mathsf a}-\tfrac{p_2}{q}\Bigr|>q^{-A}.
\ee
Then for any interval $[\alpha,\beta]$ with $0\notin[\alpha,\beta]$, we have
\be\label{eq: pair correlation assymp}
\lim_{T\to\infty}R_{\torus}(\alpha,\beta,T)=\pi^2(\beta-\alpha).
\ee
\end{thm}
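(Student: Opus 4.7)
The plan is to encode the pair correlation count as a lattice-point problem in $\R^4$ for the indefinite quadratic form $Q(\ubf_1,\ubf_2):=B_\torus(\ubf_1)-B_\torus(\ubf_2)$ of signature $(2,2)$, and to evaluate the count via equidistribution of a semisimple orbit on $\SL_4(\Z)\backslash\SL_4(\R)$. Concretely, the numerator of $R_\torus(\alpha,\beta,T)$ is, up to an $O(1)$ boundary/diagonal contribution, the number of pairs $(v,w)\in\Z^2\times\Z^2$ with $v\neq w$ lying in the region
\[
\Omega_T:=\bigl\{(\ubf_1,\ubf_2)\in\R^4:\ B_\torus(\ubf_1),B_\torus(\ubf_2)\le T,\ Q(\ubf_1,\ubf_2)\in[\alpha,\beta]\bigr\},
\]
whose Euclidean volume is $\pi^2(\beta-\alpha)T+o(T)$ by a direct computation in polar-type coordinates for $B_\torus$; this matches the candidate limit in \eqref{eq: pair correlation assymp}.

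I would next pass to homogeneous dynamics. Diagonalizing $B_\torus(\ubf)=\|M\ubf\|^2$ with $M\in\SL_2(\R)$ turns the count into a count of points of the unimodular lattice $\Lambda_\torus:=(M\oplus M)\Z^4$ in the region $\Omega^0_T$ cut out by the standard form $Q_0(\ubf_1,\ubf_2)=\|\ubf_1\|^2-\|\ubf_2\|^2$. Setting $T=e^{2t}$ and choosing a semisimple element $a_t$ in $\SO(Q_0,\R)\simeq(\SL_2(\R)\times\SL_2(\R))/\{\pm 1\}$ whose action dilates the two maximal isotropic $Q_0$-planes by factors $e^{\pm t}$, the region $a_t^{-1}\Omega^0_T$ converges to a fixed bounded set $\Omega_\infty$ of volume $\pi^2(\beta-\alpha)$, and the count rewrites as $\tfrac1T\,\#\bigl(a_t^{-1}\Lambda_\torus\cap\Omega_\infty\bigr)$. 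By the Siegel mean value formula one has $\int \widehat{\mathbbm 1}_{\Omega_\infty}\,dg=\vol(\Omega_\infty)=\pi^2(\beta-\alpha)$, so the theorem reduces to equidistribution of $\{a_t^{-1}\Lambda_\torus\}$ toward Haar on $\SL_4(\Z)\backslash\SL_4(\R)$, in a sense strong enough to integrate the Siegel transform of $\mathbbm 1_{\Omega_\infty}$.

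The equidistribution follows from Dani--Margulis/Ratner theory (or, for a quantitative rate, from the equidistribution theorem established earlier in this paper): since $a_t$ has unipotent expanding horospherical subgroup, either the trajectory equidistributes, or $\Lambda_\torus$ lies on a periodic orbit of a proper intermediate $\Q$-subgroup $\mathbf H\subsetneq\SL_4$. Via the linearization method of Dani--Margulis, any such periodic trajectory forces abnormally good rational approximations to the entries of $B_\torus$; unpacking the rationality relations imposed by such an $\mathbf H$ produces a sequence of triples $(p_1,p_2,q)$ with $|\mathsf b/\mathsf a-p_1/q|+|\mathsf c/\mathsf a-p_2/q|\le q^{-A}$, contradicting \eqref{eq: EMM Diop cond}. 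The main obstacle is upgrading this qualitative equidistribution to a pointwise statement strong enough for the unbounded Siegel transform: one needs a quantitative non-divergence estimate (in the spirit of Dani--Margulis and Kleinbock--Margulis) controlling the time spent by $a_t^{-1}\Lambda_\torus$ in the cusp of $\SL_4(\Z)\backslash\SL_4(\R)$, coupled with the Eskin--Margulis $L^p$-bounds on Siegel transforms of indicators of bounded sets; truncating the Siegel transform at a slowly growing height, applying equidistribution on the truncated part, and bounding the tail by these non-divergence estimates then yields the limit. The hypothesis $0\notin[\alpha,\beta]$ is used here precisely to keep $\Omega_\infty$ away from the null cone $\{Q_0=0\}$, avoiding a singularity of the Siegel transform supported on the diagonal $\|\ubf_1\|=\|\ubf_2\|$ that would otherwise demand substantially more delicate analysis.
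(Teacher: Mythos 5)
Your overall plan (pass to the signature $(2,2)$ form $B_\torus(\ubf_1)-B_\torus(\ubf_2)$, count lattice points via a Siegel transform on the space of lattices in $\R^4$, and use equidistribution plus nondivergence, with the Diophantine hypothesis ruling out intermediate rational obstructions) is essentially the route of~\cite{EMM-22}, which this paper cites rather than re-proves. But the middle reduction as you state it fails. The set $\Omega^0_T=\{\norm{\ubf_1}^2,\norm{\ubf_2}^2\le T,\ Q_0(\ubf_1,\ubf_2)\in[\alpha,\beta]\}$ has Euclidean volume $\sim\pi^2(\beta-\alpha)T$, and $a_t^{-1}$ is unimodular, so $a_t^{-1}\Omega^0_T$ has volume $\to\infty$ and cannot converge to a fixed bounded set $\Omega_\infty$; consequently the count is not $\#\bigl(a_t^{-1}\Lambda_\torus\cap\Omega_\infty\bigr)$ for any fixed bounded $\Omega_\infty$. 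The region is spread over all angular directions around the light cone, and the correct linearization of the count is an average over the rotation angle: one writes the number of lattice points in $e^t\Omega$ as (up to controlled errors) $T\int_0^{2\pi}\hat f(\Delta(a_t r_\theta)x_0)\,\diff\theta$ for a fixed compactly supported $f$ — this is exactly the content of Lemma~\ref{lem: rotating to vertical} here and of Lemma~3.6 in~\cite{EMM-Upp}. Related to this, the dynamical input you invoke is also not the right one: you need equidistribution of the expanded \emph{circle} (or $K$-orbit) translates $\Delta(a_t r_\theta)x_0$, not of the single diagonal trajectory $a_t^{-1}\Lambda_\torus$. For individual one-parameter diagonal orbits there is no Ratner/Dani--Margulis dichotomy of the kind you assert (such orbits can be bounded and non-dense with no rational obstruction whatsoever), so the step ``either the trajectory equidistributes or $\Lambda_\torus$ lies on a periodic orbit of an intermediate $\Q$-group'' is unjustified as stated. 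With the $\theta$-average restored, the dichotomy you want does hold (translated $K$-orbits equidistribute unless the base point is close to a periodic orbit of an intermediate group containing the stabilizer), and then the rest of your outline — linearization turning the obstruction into good rational approximations contradicting~\eqref{eq: EMM Diop cond}, nondivergence plus $L^p$ bounds to truncate the unbounded Siegel transform, and $0\notin[\alpha,\beta]$ to stay away from the diagonal contribution — is the correct skeleton of the EMM argument.

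For comparison, the present paper does not work in $\SL_4(\Z)\backslash\SL_4(\R)$ at all: it uses the identity $\lambda_1-\lambda_2=\sqf(v_1+v_2,\omega(v_1-v_2))$ to replace the signature $(2,2)$ form by the determinant form, so the analysis lives on $X=\SL_2(\R)\times\SL_2(\R)/\Gamma'$ with $H=\Delta(\SL_2(\R))$ as the relevant stabilizer. The counting reduction is the circular-average Lemma~\ref{lem: rotating to vertical}, the equidistribution input is the effective Theorem~\ref{thm: r effective equid} (deduced from~\cite{LMW22}), the cusp is handled by Margulis functions on a product of two modular surfaces (Proposition~\ref{prop: main}), and the possible exceptional contributions are pinned down by the elementary M\"obius-transformation Lemma~\ref{lem: Mobius}. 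That structure is what buys the polynomial error rate and the explicit exceptional term $M_T(u_1,u_2)$ in Theorem~\ref{thm: main}, whereas your (corrected) route reproduces only the qualitative limit of~\cite{EMM-22}.
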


Prior to~\cite{EMM-22}, Sarnak ~\cite{Sarnak-BT} showed that~\eqref{eq: pair correlation assymp}
holds on a set of full measure in the space of flat tori. The case of inhomogeneous forms, which correspond to eigenvalues of quasi-periodic eigenfunctions, was also studied by Marklof~\cite{Marklof-Inhom, Marklof-Inhom-2}, and by Margulis and the second named author~\cite{MM-Inhom}. More recently, Blomer, Bourgain, Radziwill, and Rudnick \cite{BBRR} studied consecutive spacing 
for certain families of rectangular tori, i.e., $\mathsf b=0$. We also refer
to the work of Str\"ombergsson and Vishe~\cite{Strombergsson-Pankaj} where an effective version of~\cite{Marklof-Inhom} is obtained. 

\medskip

In this paper, we prove a {\em polynomially} 
effective version of Theorem~\ref{thm: EMM flat torus}, i.e., we provide a polynomial error term for $R_{\torus}(\alpha,\beta,T)$.

\begin{thm}\label{thm: main}
Let $\torus$ be a two dimensional flat torus, 
\[
B_\torus(\cox,\coy)=\mathsf a\cox^2+2\mathsf b \cox\coy+ \mathsf c\coy^2
\]
the associated quadratic form giving the Laplacian spectrum of $\torus$ normalized so that $\mathsf a\mathsf c-\mathsf b^2=1$, and let $A\geq10^3$. 
Then there are absolute constants $\delta_0$ and $N$, some $A'$ depending on $A$, and $C$ and $T_0$ depending on $A$, $\mathsf a$, $\mathsf b$, and $\mathsf c$, and for every $0<\delta \leq\delta_0$, a $\kappa=\kappa(\delta,A)$ so that the following holds.

Let $T\geq T_0$, assume that for all $(p_1, p_2, q)\in\Z^3$ with $T^{\delta/A'}<q<T^{\delta}$ we have  
\be\label{eq: Diophantine condition}
\Bigl| \tfrac{\mathsf b}{\mathsf a} -\tfrac{p_1}{q}\Bigr|+ \Bigl|\tfrac{\mathsf c}{\mathsf a}-\tfrac{p_2}{q}\Bigr|> q^{-A}.
\ee
Then if
\[
\absolute{R_{\torus}(\alpha,\beta,T)-\pi^2(\beta-\alpha)} > C (1+\absolute{\alpha}+\absolute{\beta})^{N}T^{-\kappa},
\] 
then there are two primitive vectors $u_1,u_2 \in \Z^2$ so that 
\be\label{eq: exceptional thm main}
\norm{u_1},\norm{u_2} \leq T^{\delta/A} \qquad\text{and}  \qquad\absolute{B_\torus (u_1,u_2)}\leq T^{-1+\delta}
\ee
and moreover 
\[
R_{\torus}(\alpha,\beta,T)-\pi^2(\beta-\alpha)=\frac{M_T(u_1,u_2)}{T} + O\Bigl( (1+\absolute{\alpha}+\absolute{\beta})^{N}T^{-\kappa}\Bigr)
\]
with 
\[
M_T(u_1,u_2) = \#\left\{(\ell_1,\ell_2)\in \tfrac12\Z^2: \begin{array}{l}\ell_1u_1\pm \ell_2u_2\in\Z^2,\\ \ B_\torus (\ell_1u_1\pm \ell_2u_2)\leq T,\\
\ \ {4B_\torus(u_1,u_2)}\ell_1\ell_2 \in {[\alpha,\beta]}\end{array}\right\}.
\]
\end{thm}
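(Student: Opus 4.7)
The plan is to translate the pair-correlation count into an equidistribution statement for translates of an $\SO(Q)$-orbit on $X = \SL_4(\R)/\SL_4(\Z)$, where $Q$ is the signature $(2,2)$ form $Q(u,v) = B_\torus(u) - B_\torus(v)$ on $\R^4 = \R^2 \oplus \R^2$, then apply a polynomially effective quantitative equidistribution theorem, and read off the exceptional set arithmetically. The accidental isomorphism $\mathfrak{so}(2,2) \cong \sl_2 \oplus \sl_2$ will let us work with $\SL_2(\R)$-translates rather than higher-rank semisimple ones.

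After smoothing the indicator of the counting region $\{(u,v) \in \R^4 : B_\torus(u), B_\torus(v) \leq T, \; Q(u,v) \in [\alpha,\beta]\}$ (with Siegel-transform tail estimates to keep the smoothing errors polynomially small), the quantity $T \cdot R_\torus(\alpha, \beta, T)$ equals an integral of a Siegel-type observable on a translated $H$-orbit $H g_\torus \Gamma$, where $g_\torus \in \SL_4(\R)$ sends a fixed standard form $Q_0$ to $Q$ and $H = \SO(Q_0)^\circ$. Further slicing the region along the level sets of $Q$ by a one-parameter diagonal $\{a_t\}$ which expands a $Q_0$-null direction reduces the computation to an equidistribution statement for the translates $a_t \ucal g_\torus \Gamma$ of a fixed window $\ucal$ in a unipotent subgroup of $H$, as $t$ ranges up to $\tfrac12 \log T$.

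Next I would invoke the quantitative equidistribution theorem (the technical core of this paper and the authors' companion work): for each such $t$, either the normalized orbital measure of $a_t \ucal g_\torus \Gamma$ is within $T^{-\kappa}$ of Haar measure on $X$, or there exists a proper $\Q$-algebraic subgroup $L \subsetneq \SL_4$ and a periodic $L$-orbit on $X$ within distance $T^{-\kappa}$ of $g_\torus \Gamma$, with arithmetic complexity polynomial in $T^\delta$. Using that $H \cong \SL_2(\R) \times \SL_2(\R)$ has few possible intermediate subgroups, the relevant $L$'s reduce to stabilizers of rational $2$-planes $W \subset \Q^4$ on which $Q|_W$ is very small compared to the covolume of $W \cap \Z^4$. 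Decomposing $W$ in the splitting $\R^4 = \R^2 \oplus \R^2$ encodes such a $W$ by two primitive vectors $u_1, u_2 \in \Z^2$ (with a choice of sign), and the control on $\covol(W \cap \Z^4)$ and $Q|_W$ translates exactly into $\|u_i\| \leq T^{\delta/A}$ and $|B_\torus(u_1, u_2)| \leq T^{-1+\delta}$. The Diophantine hypothesis~\eqref{eq: Diophantine condition} in the range $T^{\delta/A'} < q < T^\delta$ rules out obstructions of intermediate complexity, because any such $(u_1, u_2)$ would yield a rational approximation $(p_1/q, p_2/q)$ to $(\mathsf b/\mathsf a, \mathsf c/\mathsf a)$ of quality strictly better than $q^{-A}$.

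Finally, the polarization identity
\[
B_\torus(\ell_1 u_1 + \ell_2 u_2) - B_\torus(\ell_1 u_1 - \ell_2 u_2) = 4 \ell_1 \ell_2 \, B_\torus(u_1, u_2)
\]
identifies the contribution from the rational plane spanned by $(u_1, u_2)$ and $(u_2, u_1)$ in $\Z^4$ with the sum $M_T(u_1, u_2)$ as defined in the statement, once the correct half-integer lattice of pairs $(\ell_1, \ell_2)$ is parameterized using $u_1, u_2$. The main obstacle I anticipate is the polynomially effective equidistribution in the second step, with an intermediate-subgroup exceptional set explicit enough to be repackaged into just two primitive vectors $u_1, u_2$. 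This requires quantitative versions of both mixing and Ratner-style classification for $\SL_2(\R)$-actions on $\SL_4(\R)/\SL_4(\Z)$, together with careful control on escape-of-mass to the cusps, all with polynomial dependence on $T$. A secondary difficulty is carrying out the cutoff and Siegel-transform tail arguments in Step~1 without degrading the polynomial rate, especially near the boundary $B_\torus = T$ of the counting region.
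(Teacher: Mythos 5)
There is a genuine gap at the heart of your second step. You propose to run the argument on $\SL_4(\R)/\SL_4(\Z)$ and to invoke, for the translates $a_t\,\ucal\, g_\torus\SL_4(\Z)$, a polynomially effective equidistribution dichotomy whose exceptional alternative is a nearby periodic orbit of a proper $\Q$-subgroup $L\subsetneq\SL_4$ of polynomially bounded complexity. No such theorem is available, and it is not ``the technical core of this paper'': the effective input actually used (Theorem~\ref{thm: r effective equid}, deduced from \cite[Thm.~1.1]{LMW22}) lives on quotients of $\SL_2(\R)\times\SL_2(\R)$, with the acting group the diagonal $H=\Delta(\SL_2(\R))$, and its exceptional alternative is a nearby \emph{periodic $H$-orbit of small volume}. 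The missing idea in your proposal is precisely the reduction that makes this applicable: the change of variables $(v_1,v_2)\mapsto(v_1+v_2,\omega(v_1-v_2))$ of \eqref{eq: B and Q0} converts $\lambda_j-\lambda_k$ into values of the \emph{fixed rational} determinant form $\sqf$ on the lattice $g\Lambda$ with $g=(g_\torus,-\omega g_\torus\omega)\in G=\SL_2(\R)\times\SL_2(\R)$, so all translates $\Delta(a_tr_\theta)g\Lambda$ stay on the closed orbit $G\Lambda\cong G/\Gamma'$ and the whole analysis takes place in $X=\SL_2(\R)\times\SL_2(\R)/\Gamma'$, where the companion effective theorem applies and where periodic $H$-orbits are described by commensurability, i.e.\ $g_2^{-1}g_1\approx\lambda Q$ with $Q$ integral --- which is exactly how the Diophantine hypothesis on $(\mathsf b/\mathsf a,\mathsf c/\mathsf a)$ enters. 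Mentioning the accidental isomorphism $\mathfrak{so}(2,2)\cong\sl_2\oplus\sl_2$ does not substitute for this reduction; without it your plan needs effective Ratner-type equidistribution for $\SL_2$-translates in $\SL_4(\R)/\SL_4(\Z)$, which is beyond current technology and certainly not supplied by this paper.

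A second, more arithmetic error: you claim the Diophantine hypothesis \eqref{eq: Diophantine condition} rules out the exceptional planes because ``any such $(u_1,u_2)$ would yield a rational approximation of quality strictly better than $q^{-A}$.'' That cannot be right, and the theorem's statement already tells you so: exceptional pairs $u_1,u_2$ satisfying \eqref{eq: exceptional thm main} can exist even under \eqref{eq: Diophantine condition}, and their contribution is not removed but computed as $M_T(u_1,u_2)/T$. In the paper the Diophantine condition plays two different roles: via the M\"obius-transformation argument (Lemmas~\ref{lem: Mobius}--\ref{lem: super exceptional}) it forces that there are at most two exceptional subspaces (one pair $u_1,u_2$ up to order), and via the sublemma in \S\ref{sec: proof of equi} it excludes nearby low-volume periodic $\Delta(\SL_2)$-orbits so that the equidistribution alternative of Theorem~\ref{thm: r effective equid} holds. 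Finally, the control of the unbounded Siegel-type observable --- the Margulis-function estimates of Proposition~\ref{prop: main} and Lemma~\ref{lem: gen bd for int hat f}, together with Lemma~\ref{lem: special subspace 2} showing only the range \eqref{eq: exceptional thm main} can affect the error term --- is an essential quantitative component that your proposal only names as a difficulty; as written, the upper-bound half of the argument is absent.
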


\noindent 
The proof of Theorem~\ref{thm: main} is effective, and for all of the above implicit constants, one can give explicit expressions if desired.

Let us now elaborate on the term $M_T(u_1, u_2)$ in the statement of Theorem~\ref{thm: main}: Let $u_1, u_2\in\Z^2$ be two primitive vectors satisfying 
\[
0<\norm{u_i}\leq T^{\delta/A}\quad\text{and}\quad\absolute{B_\torus(u_1,u_2)}\leq T^{-1+\delta}.
\]
Then for all $(\ell_1,\ell_2)\in\tfrac12\Z^2$, we have  
\[
B_\torus(\ell_1u_1+\ell_2u_2)-B_\torus(\ell_1u_1-\ell_2u_2)=
4B_\torus(u_1,u_2)\ell_1\ell_2; 
\]
in particular if $T^{-1-\delta}\leq \absolute{B_\torus(u_1,u_2)}\leq T^{-1+\delta}$ then 
$M_T(u_1,u_2)\gg T^{-10\delta}$, hence the contribution to $R_{\torus}(\alpha,\beta,T)$ from the pairs
\[B_\torus(\ell_1u_1+\ell_2u_2),B_\torus(\ell_1u_1-\ell_2u_2)
\]
would be bigger than any fixed polynomial error term. Moreover, even if~\eqref{eq: Diophantine condition} holds, such pairs $u_1,u_2\in \Z^2$ can definitely exist.
If~\eqref{eq: Diophantine condition} holds, up to changing the order such a pair $u_1,u_2$ is unique --- see Lemma~\ref{lem: super exceptional} --- hence there is no need for additional error terms. 

\medskip

We now state a corollary of Theorem~\ref{thm: main}. 
A rectangular torus has extra multiplicities in the spectrum built in, so to accommodate that we consider the modified pair correlation function 
\[
R'_{\torus}(\alpha,\beta,T)=\frac{\#\{(j,k): \lambda_j\neq\lambda_k<T,\; \alpha\leq\lambda_j-\lambda_k\leq \beta\}}{T}.
\]

\begin{coro}\label{cor: Stronger Diophantine}
Let $\torus$ be a two dimensional flat torus, and let 
\[
B_\torus(\cox,\coy)=\mathsf a\cox^2+2\mathsf b \cox\coy+ \mathsf c\coy^2
\] 
be normalized so that $\mathsf a\mathsf c-\mathsf b^2=1$. 

\begin{enumerate}
\item Suppose there exist $A\geq 1$ and $q>0$ such that for all $(m,n,k)\in\Z^3\setminus\{0\}$, 
\begin{equation}\label{eq: stronger Diophantine}
\absolute{\mathsf a m+\mathsf b n+\mathsf c k}> q\norm{(m,n,k)}^{-A}.
\end{equation}
Then 
\[
\absolute{R_{\torus}(\alpha,\beta,T)-\pi^2(\beta-\alpha)} \leq C (1+\absolute{\alpha}+\absolute{\beta})^{10}T^{-\kappa}.
\] 
 
\item Let $\torus$ be a rectangular torus, i.e., $\mathsf b=0$. Assume there exist $A\geq 1$ and $q>0$ such that for all $(m,n)\in\Z^2\setminus\{0\}$ we have 
\[
\absolute{\mathsf a^2 m+n}> q\norm{(m,n)}^{-A}.
\]
Then 
\[
\absolute{R'_{\torus}(\alpha,\beta,T)-\pi^2(\beta-\alpha)} \leq C (1+\absolute{\alpha}+\absolute{\beta})^{10}T^{-\kappa}.
\]
\end{enumerate}
Where $\kappa$ depends on $A$ and $C$ depends on $\mathsf a$, $\mathsf b$, $\mathsf c$, $A$ and $q$.
\end{coro}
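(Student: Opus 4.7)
The plan is to derive both parts of Corollary~\ref{cor: Stronger Diophantine} from Theorem~\ref{thm: main}. The strategy in each case is: (i) verify that the stated Diophantine hypothesis implies condition~\eqref{eq: Diophantine condition}; (ii) analyze any super-exceptional pair $(u_1,u_2)$ the theorem might produce; and (iii) either rule out such a pair (part~(1)) or match the associated term $M_T(u_1,u_2)/T$ with the coincident-eigenvalue contribution distinguishing $R_\torus$ from $R'_\torus$ (part~(2)). Everything rests on the bilinear identity
\[
B_\torus(u_1,u_2)=\mathsf a\,u_1^{(1)}u_2^{(1)}+\mathsf b\bigl(u_1^{(1)}u_2^{(2)}+u_1^{(2)}u_2^{(1)}\bigr)+\mathsf c\,u_1^{(2)}u_2^{(2)}=\mathsf a m+\mathsf b n+\mathsf c k,
\]
which realizes $B_\torus(u_1,u_2)$ as an integer linear combination of $(\mathsf a,\mathsf b,\mathsf c)$ with coefficients $(m,n,k)\in\Z^3$ of absolute value at most $\|u_1\|\,\|u_2\|\leq T^{2\delta/A}$.

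For part~(1), applying~\eqref{eq: stronger Diophantine} to the triples $(-p_1,q,0)$ and $(-p_2,0,q)$ gives $|\mathsf a p_1-\mathsf b q|\gg q^{1-A}$ and $|\mathsf a p_2-\mathsf c q|\gg q^{1-A}$ for $|p_i|\leq q$, from which~\eqref{eq: Diophantine condition} follows for a suitable $A'$. If the polynomial error bound fails, Theorem~\ref{thm: main} produces primitive $u_1,u_2\in\Z^2$ with $\|u_i\|\leq T^{\delta/A}$ and $|B_\torus(u_1,u_2)|\leq T^{-1+\delta}$. If the triple $(m,n,k)$ above is nonzero, then~\eqref{eq: stronger Diophantine} forces $|B_\torus(u_1,u_2)|\geq qT^{-2\delta}$, contradicting the upper bound for $\delta$ small enough and $T$ large enough. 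If instead $(m,n,k)=0$, a short algebraic argument forces $u_1,u_2$ to be parallel, so $u_2=\pm u_1$ by primitivity; but then $|B_\torus(u_1,u_2)|=B_\torus(u_1)$ is bounded below uniformly in $T$, again a contradiction. Hence no exceptional pair can exist, and the theorem yields~(1) directly.

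For part~(2), $\mathsf b=0$ makes~\eqref{eq: Diophantine condition} equivalent to a Diophantine condition on $\mathsf c/\mathsf a=\mathsf a^{-2}$, which follows from the hypothesis. The bilinear expansion reduces to $B_\torus(u_1,u_2)=(\mathsf a^2 m+k)/\mathsf a$ with $m=u_1^{(1)}u_2^{(1)}$, $k=u_1^{(2)}u_2^{(2)}$, and the Diophantine condition on $\mathsf a^2$ forces $(m,k)=0$ for any exceptional pair. For primitive linearly independent $u_1,u_2$, this pins down $(u_1,u_2)$, up to the equivalence of Lemma~\ref{lem: super exceptional}, as the pair $\bigl((1,0),(0,1)\bigr)$. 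For this pair $B_\torus((1,0),(0,1))=0$, hence
\[
M_T\bigl((1,0),(0,1)\bigr)=\mathbb{1}_{[\alpha,\beta]}(0)\cdot\#\{(\ell_1,\ell_2)\in\Z^2:\mathsf a\ell_1^2+\mathsf c\ell_2^2\leq T\}.
\]
Under the Diophantine condition on $\mathsf a^2$, coincidences $B_\torus(\cox,\coy)=B_\torus(\cox',\coy')$ occur only when $|\cox|=|\cox'|$ and $|\coy|=|\coy'|$, so the excess count $R_\torus-R'_\torus$ reduces to a lattice-point count in the same ellipse weighted by the sizes of the $4$-fold symmetry orbits (with a correction from the axis orbits). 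Matching this count against $M_T\bigl((1,0),(0,1)\bigr)/T$ and substituting into the conclusion of Theorem~\ref{thm: main} gives the bound on $R'_\torus-\pi^2(\beta-\alpha)$.

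The main technical obstacle is the combinatorial bookkeeping in the final step of part~(2): correctly accounting for the $4$-fold symmetry of the rectangular torus and the contribution of the axis lattice points $(\cox,0)$ and $(0,\coy)$, with errors controlled by Gauss-circle-type bounds on the lattice-point remainders in the ellipse $\{\mathsf a x^2+\mathsf c y^2\leq T\}$.
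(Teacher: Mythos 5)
Part~(1) of your proposal is correct and is essentially the paper's own argument: you transfer the linear-form hypothesis~\eqref{eq: stronger Diophantine} to the condition~\eqref{eq: Diophantine condition}, and you exclude the exceptional pair of Theorem~\ref{thm: main} by writing $B_\torus(u_1,u_2)=\mathsf a m+\mathsf b n+\mathsf c k$ with $(m,n,k)=(\cox_1\cox_2,\cox_1\coy_2+\coy_1\cox_2,\coy_1\coy_2)$ and $\norm{(m,n,k)}\ll T^{2\delta/A}$, exactly as the paper does. (Your side case $(m,n,k)=0$ is in fact vacuous: for nonzero $u_1,u_2$ this triple cannot vanish, as its vanishing forces one of the $u_i$ to be zero; the way you dispose of it is harmless in any event.)

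Part~(2) has a genuine gap in its final step. You propose to compute the coincidence excess $R_\torus-R'_\torus$ and to match it with $M_T\bigl((1,0),(0,1)\bigr)/T$; these two quantities do not match. Under the Diophantine hypothesis the only coincidences are the sign-symmetry ones, so $T\bigl(R_\torus-R'_\torus\bigr)$ equals, up to boundary terms, $\sum_{\lambda\leq T} m(\lambda)\bigl(m(\lambda)-1\bigr)$; a multiplicity-$4$ eigenvalue contributes $12$ ordered pairs, i.e.\ $3$ per lattice point of its orbit, so this sum is asymptotically $3\,\#\{(\ell_1,\ell_2)\in\Z^2:\mathsf a\ell_1^2+\mathsf c\ell_2^2\leq T\}$, whereas $M_T\bigl((1,0),(0,1)\bigr)$ (when $0\in[\alpha,\beta]$) counts each such lattice point exactly once. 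No bookkeeping of $4$-fold orbits and axis corrections can repair this factor-$3$ mismatch, and carrying out your substitution into the conclusion of Theorem~\ref{thm: main} would leave a spurious discrepancy of constant size (roughly $2\pi$) in $R'_\torus-\pi^2(\beta-\alpha)$ rather than the claimed $O(T^{-\kappa})$ bound. The paper takes a different, and safer, route: once Lemma~\ref{lem: super exceptional} identifies the exceptional pair as $(e_1,e_2)$, one has $B_\torus(u_1,u_2)=\mathsf b=0$, so every pair of lattice vectors entering the exceptional term has eigenvalue difference exactly $0$ and therefore never appears in the count defining $R'_\torus$; running the count of Theorem~\ref{thm: main} for $R'_\torus$, the exceptional contribution is simply absent and the asymptotic $\absolute{R'_\torus(\alpha,\beta,T)-\pi^2(\beta-\alpha)}\ll(1+\absolute{\alpha}+\absolute{\beta})^{N}T^{-\kappa}$ follows directly, with no need to identify $R_\torus-R'_\torus$ with $M_T/T$. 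To repair your write-up, replace the matching step by this one-sided observation: the pairs underlying $M_T$ all satisfy $\lambda_j=\lambda_k$, hence are excluded from $R'_\torus$ by definition, and the remaining (non-exceptional) count obeys the stated error bound.
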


Indeed, under \eqref{eq: stronger Diophantine}, pairs $u_1,u_2$ of primitive integer vectors as in Theorem~\ref{thm: main} do not exist, and if $\torus$ is a rectangular torus the unique (up to order) pair of primitive vectors is given by $e_1=(1,0), e_2=(0,1)$, for which the contribution of $M_T(e_1,e_2)$ can be accounted for by looking at $R'_{\torus}(\alpha,\beta,T)$ instead of $R_{\torus}(\alpha,\beta,T)$.

\medskip
The general strategy of the proof of Theorem~\ref{thm: main} is similar to~\cite{EMM-Upp} and~\cite{EMM-22}. 
That is, we deduce the above theorems from an equidistribution 
theorem for certain unbounded functions in homogeneous spaces.  
Unlike~\cite{EMM-Upp} and~\cite{EMM-22}, where the analysis takes place in the space of unimodular lattices in $\R^4$, 
the homogeneous space in question here is 
\[
X=\SL_2(\R)\times\SL_2(\R)/\Gamma'
\] 
where $\Gamma'$ is a finite index subgroup of $\SL_2(\Z)\times\SL_2(\Z)$. 

This reduction is carried out in~\S\ref{sec: proofs}. The lower bound estimate will be proved using the following effective equidistribution theorem that relies on~\cite[Thm.~1.1]{LMW22}:

Let $G=\SL_2(\R)\times\SL_2(\R)$.
For all $h\in\SL_2(\R)$, we let $\Delta(h)$ denote the element $(h,h)\in G$, and let $H=\Delta(\SL_2(\R))$. For every $t\in\R$ and every $\theta\in[0,2\pi]$, let  
\[
a_t = \begin{pmatrix}e^t &0\\ 0 &e^{-t}\end{pmatrix}\quad\text{and}\quad r_\theta=\begin{pmatrix}\cos\theta &-\sin\theta\\ \sin\theta &\cos\theta\end{pmatrix}.
\]

\begin{thm}\label{thm: r effective equid}
Assume $\Gamma$ is an arithmetic lattice in $G$. 
For every $x_0\in X=G/\Gamma$, and large enough $R$ (depending explicitly on $X$ and the injectivity radius at $x_0$), for any $e^t\geq R^{D}$, at least one of the following holds.
\begin{enumerate}
\item For every $\varphi\in C_c^\infty(X)$ and $2\pi$-periodic smooth function $\xi$ on $\R$, we have 
\[
\biggl|\ave \varphi(\Delta(a_tr_\theta) x_0)\xi(\theta)\diff\!\theta-\int_0^{2\pi}\xi(\theta)\diff\!\theta \int \varphi\diff\!m_X\biggr|\leq \Sob(\varphi)\Sob(\xi)R^{-\kappa_0}
\]
where we use $\Sob(\cdot)$ to denote an appropriate Sobolev norm on both $X$ and $\R$, respectively. 
\item There exists $x\in X$ such that $Hx$ is periodic with $\vol(Hx)\leq R$, and 
\[
\dist_X(x,x_0)\leq R^{D}t^De^{-t}.
\] 
\end{enumerate} 
The constants $D$ and $\kappa_0$ are positive and depend on $X$ but not on $x_0$, and $d_X$ is a fixed metric on $X$. 
\end{thm}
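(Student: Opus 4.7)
The plan is to deduce Theorem~\ref{thm: r effective equid} from the effective equidistribution theorem \cite[Thm.~1.1]{LMW22}, which provides an analogous dichotomy --- polynomial equidistribution with error $R^{-\kappa_0}$ versus proximity of $x_0$ to a periodic $H$-orbit of volume at most $R$ --- for long orbit pieces of the unipotent subgroup $\Delta(N^+)\subset H$ acting on $X=G/\Gamma$. The main task is a geometric reduction from the circle average
$\frac{1}{2\pi}\int_0^{2\pi}\varphi(\Delta(a_tr_\theta)x_0)\xi(\theta)\,d\theta$
to such a horocycle average.

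The key identity is the Bruhat decomposition (with $u_y=\left(\begin{smallmatrix}1&y\\0&1\end{smallmatrix}\right)$ and $u^-_z=\left(\begin{smallmatrix}1&0\\z&1\end{smallmatrix}\right)$)
\[
a_t r_\theta \;=\; u_{-e^{2t}\tan\theta}\;a_{t-\log\cos\theta}\;u^-_{\tan\theta}, \qquad \cos\theta\neq 0,
\]
so that after applying $\Delta$ and acting on $x_0$,
\[
\Delta(a_tr_\theta)x_0 \;=\; \Delta(u_{-e^{2t}\tan\theta})\,g(\theta)\,x_0,
\]
where $g(\theta)=\Delta(a_{t-\log\cos\theta}\,u^-_{\tan\theta})$ is a slowly varying element staying close to $\Delta(a_t)$. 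I would cover the circle by two charts (say $|\theta|<\pi/2-\delta_0$ and $|\theta-\pi|<\pi/2-\delta_0$) via a smooth partition of unity on $[0,2\pi]$ whose cost is absorbed into $\Sob(\xi)$, and on the first chart substitute $Y=-e^{2t}\tan\theta$, so that $Y$ ranges over an interval of length $\sim e^{2t}$ and $d\theta=e^{-2t}\cos^2\theta(Y)\,dY$. The circle average then becomes a weighted integral
\[
\int \varphi\bigl(\Delta(u_Y)\,g(Y)\,x_0\bigr)\,\tilde\xi(Y)\,\frac{dY}{e^{2t}}
\]
along a unipotent orbit piece of length $\sim e^{2t}$; the new weight $\tilde\xi$ inherits $\Sob(\xi)$ up to factors bounded by $\delta_0^{-O(1)}$.

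Applying \cite[Thm.~1.1]{LMW22} now yields the desired dichotomy: either equidistribution of this horocycle average with polynomial rate, which upon reversing the change of variables gives alternative (1); or the existence of a periodic $H$-orbit $Hx$ with $\vol(Hx)\le R$ and $\dist_X(x,x_0)\le R^D t^D e^{-t}$, which is alternative (2). The main obstacle is controlling the effect of the slowly varying base point $g(Y)\,x_0$: although $g(Y)$ remains in a bounded neighborhood of $\Delta(a_t)$ in $H$, its precise $Y$-dependence must be tracked through the reduction. I expect to handle this by absorbing the small $\Delta(A)$-perturbation (of size $|\log\cos\theta|\le\log(1/\delta_0)$) and the $\Delta(N^-)$-perturbation (of size $|\tan\theta|\le 1/\delta_0$) into the test function, paying only $\Sob(\varphi)\cdot\delta_0^{-O(1)}$ by standard Sobolev estimates, so that after replacing $g(Y)$ by the constant $g(0)$ the remaining integral is exactly a weighted unipotent orbit average at a fixed base point to which \cite{LMW22} applies verbatim. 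A secondary subtlety is matching the statement of \cite{LMW22} so that the periodic orbit obstruction produced is located near $x_0$ itself (as opposed to $\Delta(a_t)x_0$); this is a routine consequence of the fact that $\Delta(a_t)\in H$ preserves periodic $H$-orbits and that, after the Bruhat reduction, the relevant unipotent orbit piece is expressed directly in terms of $x_0$.
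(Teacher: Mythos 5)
Your reduction has a genuine gap at the crucial step where you replace the $Y$-dependent element $g(Y)=\Delta(a_{t-\log\cos\theta}\,u^-_{\tan\theta})$ by the constant $g(0)=\Delta(a_t)$. The discrepancy is $g(Y)g(0)^{-1}=\Delta\bigl(a_{-\log\cos\theta}\,u^-_{e^{-2t}\sin\theta\cos\theta}\bigr)$, whose diagonal component $a_{-\log\cos\theta}$ is of \emph{unit} size for macroscopic $\theta$ (it is only bounded by $\delta_0^{-O(1)}$, not small). Sobolev/Lipschitz estimates absorb a perturbation $\epsilon$ at cost $\Sob(\varphi)\,d(\epsilon,e)$, so they only help when $\epsilon$ is close to the identity; a bounded, non-small base-point change produces an $O(\Sob(\varphi))$ error with no gain, and cannot be paid for by $\delta_0^{-O(1)}$ factors. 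Nor can you move the discrepancy to the other side of $\Delta(u_Y)$: conjugating by $u_Y$ with $|Y|\sim e^{2t}$ sends $a_{-\log\cos\theta}$ to $a_{-\log\cos\theta}u_{e^{2t}\sin^2\theta\tan\theta}$ and $u^-_{e^{-2t}\tan\theta}$ to an element with an entry of size $\asymp e^{2t}\tan^3\theta$, i.e.\ the perturbation explodes. Concretely, for $\theta$ bounded away from $0$ the circle point $\Delta(u_Y)g(Y)x_0$ and the horocycle point $\Delta(u_Y)g(0)x_0$ are at distance $\gg 1$ in $X$, so the two weighted integrals cannot be compared pointwise; that both eventually equidistribute is the conclusion you are after, not something you may use. (A secondary, smaller point: \cite[Thm.~1.1]{LMW22} as invoked here is an unweighted average over a fixed unit interval, so even after a valid reduction you would still need to dispose of the weight $\tilde\xi$, e.g.\ by chopping into pieces on which it is nearly constant.)

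The comparison between the expanded circle and expanded unipotent pieces is valid only \emph{locally}, and fixing your argument along these lines essentially reproduces the paper's proof: writing $r_\zeta=u^-_{s^-(\zeta)}a_{\tau(\zeta)}u_{s(\zeta)}$ with $\tau(\zeta)=O(\zeta^2)$, the arc $\{\Delta(a_tr_{\hat\zeta+\zeta'})x_0:0\le \zeta'\le\zeta\}$ lies within $O(e^{-2t}\zeta+\zeta^2)$ of the unipotent piece $\{\Delta(a_{t-\tau}u_s)x_{\hat\zeta}:0\le s\le 1\}$, where $x_{\hat\zeta}=\Delta(a_\tau r_{\hat\zeta})x_0$ and $e^{2\tau}=\zeta^{-1}$; the error is small only because one has localized to arcs of length $\zeta$, and on each arc the weight $\xi$ is pulled out as approximately constant. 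This localization also makes the obstruction case work cleanly: each base point $x_{\hat\zeta}$ is reached from $x_0$ by $\Delta(a_\tau r_{\hat\zeta})$ with $\tau$ only logarithmic in $R$, so by \eqref{eq: d-X Lipschtiz} a periodic $H$-orbit of volume $\le R$ near some $x_{\hat\zeta}$ pulls back to one within $R^Dt^De^{-t}$ of $x_0$. (In your global version this last point would actually have been harmless, since with $T=e^t$ the base point of the resulting average is $x_0$ itself; the fatal step is the base-point replacement, not the relocation of the periodic orbit.)
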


This is a variant of \cite[Thm.\ 1.1]{LMW22}. Indeed, instead of expanding an orbit segment of the unipotent flow $\Delta(u_s)$ where 
\[
u_s=\begin{pmatrix}1 &s\\ 0 &1\end{pmatrix},
\]
here we expand an orbit of the compact group $\{\Delta(r_\theta)\}$. The deduction of Theorem~\ref{thm: r effective equid} from \cite[Thm.\ 1.1]{LMW22} is given in \S\ref{sec: equidistribution} using a fairly simple and standard argument.

\medskip

To prove the upper bound estimate, in addition to Theorem~\ref{thm: r effective equid}, we also need to analyze Margulis functions \'a la ~\cite{EMM-Upp, EMM-22}; our analysis simplifies substantially thanks to simpler structure of the cusp in $\SL_2(\R)\times\SL_2(\R)/\Gamma'$ compared to that in $\SL_4(\R)/\SL_4(\Z)$. This is the content of \S\ref{eq: upper bound}. Indeed Proposition~\ref{prop: main} reduces the analysis to special subspaces, see Definition~\ref{def: special}, that are closely connected to the pairs of almost $B_\torus$-orthogonal vectors discussed above. We study these special subspaces using the elementary Lemma~\ref{lem: Mobius}; in particular, using this lemma we establish Lemma~\ref{lem: special subspace}, which shows that under~\eqref{eq: Diophantine condition} there are at most two special subspaces. Finally, Lemma~\ref{lem: special subspace 2} shows that even for special subspaces, only the range asserted in~\eqref{eq: exceptional thm main} can produce enough solutions to affect the error term.

\subsection*{Acknowledgment}
We would like to thank Jens Marklof for helpful conversations.


\section{Notation and preliminaries}\label{sec: notation}

In this paper 
\[
G=\left\{\begin{pmatrix}g_1 &0\\ 0 &g_2\end{pmatrix}\!:\! g_1,g_2\in \SL_2(\R)\right\}\;\;\text{and}\;\; H=\left\{\begin{pmatrix}g &0\\ 0 &g\end{pmatrix}: g\in \SL_2(\R)\right\}.
\] 
Let $\gfrak=\Lie(G)$ and $\hfrak=\Lie(H)$. 

We identify $G$ with $\SL_2(\R)\times\SL_2(\R)$ and $H$ with 
\[
\{(g,g): g\in \SL_2(\bbr)\}\subset\SL_2(\R)\times\SL_2(\R).
\] 
Indeed, to simplify the notation, we will often denote 
\[
g=\begin{pmatrix}g_1 &0\\ 0 &g_2\end{pmatrix}\in G
\] 
by $(g_1,g_2)$. Given $v=(\cox_1,\coy_1, \cox_2,\coy_2)\in\R^4$, we  write
$g.v=(g_1v_1,g_2v_2)$ where $v_i=(\cox_i,\coy_i)\in\R$ for $i=1,2$ (for purely typographical reasons, we prefer to work with row vectors even though representing these as column vectors would be more consistent).

For all $h\in\SL_2(\R)$, we let $\Delta(h)=(h,h)\in H$. In particular, 
for every $t\in\R$ and every $\theta\in[0,2\pi]$, $\dat$ and $\drot$ denote the images of 
\[
\begin{pmatrix}e^t &0\\ 0 &e^{-t}\end{pmatrix}\quad\text{and}\quad\begin{pmatrix}\cos\theta &-\sin\theta\\ \sin\theta &\cos\theta\end{pmatrix}
\]
in $H$, respectively. 

\subsection{Quadratic Forms}\label{sec: quad form}
Let $\sqf$ denote the {\em determinant} form on $\R^4$:
\[
\sqf(\cox_1,\coy_1, \cox_2,\coy_2)=\cox_1\coy_2-\cox_2\coy_1.
\]
Note that $H= G\cap\SO(\sqf)$.

Let $\Delta\subset\R^2$ be a lattice and let $\Delta^*$ be the dual lattice. 
We normalize $\Delta^*$ to have covolume $(2\pi)^{-2}$ and fix $g_\torus\in\SL_2(\R)$ so that 
\[
2\pi\Delta^*=g_\torus \Z^2. 
\]
The eigenvalues of the Laplacian on $\R^2/\Delta$  are   
$\norm{v}^2$ for $v\in 2\pi\Delta^*$. 
Therefore, given two eigenvalues $\lambda_i=\norm{v_i}^2$, $i=1,2$, we have 
\be\label{eq: B and Q0}
\begin{aligned}
  \lambda_1-\lambda_2&=(\norm{v_1}^2-\norm{v_2}^2)=(v_1+v_2)\cdot(v_1-v_2)\\
  &=\sqf(v_1+v_2, \omega(v_1-v_2))
\end{aligned}
\ee
where $\omega=\begin{pmatrix}0 & -1\\ 1& 0\end{pmatrix}$. 

Recall that $G=\SL_2(\R)\times\SL_2(\R)\subset\SL_4(\R)$. 
Define 
\[
\Lambda = \{(v_1+v_2, \omega(v_1-v_2)): v_1,v_2\in \Z^2\}\subset \R^4.
\]
Then 
$\{(v_1+v_2, \omega(v_1-v_2)): v_1,v_2\in 2\pi\Delta^*\}= (g_\torus, -\omega g_\torus\omega)\Lambda$.

Let $\Gamma'$ be the maximal subgroup of $\SL_2(\Z)\times\SL_2(\Z)$ which preserves $\Lambda$. More explicitly, 
\[
\Gamma'=\{(\gamma_1,\gamma_2)\in\SL_2(\Z)\times\SL_2(\Z): \gamma_1 \equiv \omega \gamma_2 \omega \pmod{2}\}.
\]
Let $X=G/\Gamma'$.

\subsection*{M\"{o}bius transformations}
In this section, we prove an elementary lemma concerning M\"{o}bius transformations. This lemma will be used to complete the proof of Lemma~\ref{lem: super exceptional}; it also will be used in the proof of Lemma~\ref{lem: not many solutions}.  

Let $\pcal$ denote the set of primitive vectors in $\Z^2$. For every $t\geq 1$, let 
\[
\pcal(t)=\{v\in\pcal: \norm{v}< e^t\}.
\]

\begin{lemma}\label{lem: Mobius}
Let $A\geq 10^3$, $s>0$ and $0<\eta^A<e^{-s/100}$. 
Assume that for $i=1,2$ there are $v_i, v_i', v_i''\in \mathcal P(s)$ satisfying
\be\label{eq: Mobius}
1\leq\absolute{\sqf(v,w)}\ll \eta^{-4}, \qquad \text{for $v, w\in\{v_i,v_i',v_i''\}$.}
\ee
Also suppose there are $h\in\PGL_2(\R)$ and $C>0$ so that 
\be\label{eq: Mobius h}
hv_1=\mu v_2 + w_{1,2},\;\; hv_1'=\mu' v_2' + w'_{1,2},\;\; hv_1''=\mu'' v_2'' + w_{1,2} 
\ee
where $\absolute{\mu},\absolute{\mu'},\absolute{\mu''}\geq C^{-1}$ and $\norm{w}\leq C\eta^Ae^{-s}$ for $w\in\{w_{1,2}, w'_{1,2}, w''_{1,2}\}$. 

Then 
there exists $Q\in\Mat_2(\Z)$ with $\norm{Q}\leq\eta^{-100}$ and $\lambda\in\R$ 
so that 
\[
\norm{h-\lambda Q}\leq C'\eta^{A-50},
\]
where $C'$ depends on $C$ and polynomially on $\norm{h}$. 
\end{lemma}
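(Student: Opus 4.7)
The plan is to convert the three approximate identities \eqref{eq: Mobius h} into a Diophantine approximation $\mu'/\mu\approx p/q$ with $|p|,|q|\ll\eta^{-16}$, and then extract $Q$ as an explicit integer matrix in $v_1,v_1',v_2,v_2',p,q$, handling the norm bound $\|Q\|\le\eta^{-100}$ via a case split on $\|h\|$ combined with Minkowski reduction of the sublattices of $\Z^2$ spanned by the $v_i$'s.

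Since $|\sqf(v_1,v_1')|\ge 1$, the set $\{v_1,v_1'\}$ is an $\R$-basis of $\R^2$, and there is a unique linear dependence
\[
\sqf(v_1,v_1')\,v_1''=\sqf(v_1'',v_1')\,v_1+\sqf(v_1,v_1'')\,v_1'
\]
with integer coefficients of absolute value $\ll\eta^{-4}$. Applying $h$ and substituting \eqref{eq: Mobius h} gives $\sqf(v_1,v_1')\mu''v_2''-\sqf(v_1'',v_1')\mu v_2-\sqf(v_1,v_1'')\mu'v_2'=O(\eta^{A-4}e^{-s})$. Using the analogous identity on the $v_2$-side to eliminate $v_2''$ (and clearing the denominator $\sqf(v_2,v_2')$) produces $\alpha v_2+\beta v_2'=O(\eta^{A-8}e^{-s})$, where $\alpha,\beta$ are integer-weighted linear combinations of $\mu,\mu',\mu''$ with weights $\ll\eta^{-8}$ (each weight being a product of two $\sqf$'s). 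Pairing both sides against $v_2$ and $v_2'$ via $\sqf(\cdot,\cdot)$ and using $|\sqf(v_2,v_2')|\ge 1$ and $\|v_2\|,\|v_2'\|\le e^s$ gives $|\alpha|,|\beta|\ll\eta^{A-8}$; the lower bounds $|\mu|,|\mu'|,|\mu''|\ge C^{-1}$ then allow us to divide the two resulting rational approximations and conclude $\mu'/\mu=p/q+O(\eta^{A-16})$ with integers $|p|,|q|\ll\eta^{-16}$.

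With $V:=[v_1\,|\,v_1']$, set $Q:=[q\,v_2\,|\,p\,v_2']\operatorname{adj}(V)\in\Mat_2(\Z)$ and $\lambda:=\mu/(q\,\sqf(v_1,v_1'))$. A direct check gives $\lambda Q=[\mu v_2\,|\,(p\mu/q)v_2']V^{-1}$, so that $(h-\lambda Q)V=[w_{1,2}\,|\,w'_{1,2}+(\mu'-p\mu/q)v_2']$ has column norms $\ll\eta^Ae^{-s}+|\mu|\eta^{A-16}e^s$. Multiplying by $V^{-1}$ and using $\|V^{-1}\|\le e^s$ together with $\eta^A<e^{-s/100}$ (which converts the $e^{O(s)}$ factors into polynomial-in-$\eta^{-1}$ factors, and hence into a polynomial-in-$\|h\|$ prefactor after slightly increasing $A$) gives $\|h-\lambda Q\|\ll\mathrm{poly}(\|h\|)\cdot\eta^{A-50}$. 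To bound $\|Q\|$ split into two regimes: if $\|h\|$ exceeds a fixed polynomial threshold in $\eta^{-1}$, take $Q=0,\lambda=0$ and absorb $\|h-\lambda Q\|=\|h\|$ into the polynomial-in-$\|h\|$ factor of $C'$; if $\|h\|\ll\eta^{-O(1)}$, pass to Minkowski-reduced bases of the sublattices $\Z v_1+\Z v_1',\Z v_2+\Z v_2'\subset\Z^2$ (both of covolume $\le\eta^{-4}$, hence admitting bases of length $\ll\eta^{-2}$), and rewrite the above $Q$ in these short bases to obtain $\|Q\|\ll\eta^{-O(1)}\le\eta^{-100}$.

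\textbf{Main obstacle.} The delicate point is the norm bound $\|Q\|\le\eta^{-100}$. Naively, the ingredients $v_i,v_2,v_2',\operatorname{adj}(V)$ and $p,q$ yield only $\|Q\|\ll\eta^{-16}e^{2s}$, which is vastly too large. The saving combines three ingredients: the hypothesis $\eta^A<e^{-s/100}$ (which controls $e^{O(s)}$ by a polynomial in $\eta^{-1}$), Minkowski reduction of the index-$\le\eta^{-4}$ sublattices of $\Z^2$ (to exhibit short integer bases absorbing the exponential factors), and the trivial choice $Q=0$ once $\|h\|$ itself is too large for any nontrivial integer approximation to be useful.
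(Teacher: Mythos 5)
Your algebraic core is sound and in fact runs parallel to the paper's argument: eliminating $v_1''$ and $v_2''$ by Cramer's rule and pairing against $v_2,v_2'$ does give nonzero integers $p,q$ with $\absolute{p},\absolute{q}\ll\eta^{-16}$ and $\absolute{q\mu'-p\mu}\ll\eta^{A-16}$, and your matrix $Q=[q\,v_2\mid p\,v_2']\operatorname{adj}(V)$ is, up to bounded rational scalars, the same integer matrix the paper produces by clearing denominators in $M_2M_1$ (the paper normalizes the two triples to $(0{:}1),(1{:}0),(1{:}1)$ in $\mathbb P^1$; your $p/q$ is the same cross-ratio datum as its rational $r$). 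The divergence is in the two final estimates, and there your proposal has a genuine gap.

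The decisive gap is the claimed absorption of the $e^{O(s)}$ factors. Your own computation yields $\norm{h-\lambda Q}\ll\eta^A+\absolute{\mu}\,\eta^{A-16}e^{2s}$ (one factor $e^s$ from $\norm{v_2'}$, one from $\norm{V^{-1}}$, and possibly $\absolute{\mu}\ll\norm{h}e^s$ on top), and you propose to neutralize $e^{O(s)}$ via the hypothesis $\eta^A<e^{-s/100}$. But that hypothesis only gives $e^{s}\le\eta^{-100A}$, so $e^{2s}\le\eta^{-200A}$: a power of $\eta^{-1}$ whose exponent grows linearly in $A$ wipes out the gain $\eta^{A-16}$ completely, and it can be hidden neither inside $\eta^{A-50}$ nor inside $C'$, which is allowed to depend only on $C$ and polynomially on $\norm{h}$ (it is a power of $\eta^{-1}$, not of $\norm{h}$). ``Slightly increasing $A$'' is not available: $A$ is given, the error hypothesis $\norm{w}\le C\eta^Ae^{-s}$ is tied to it, and the loss scales with $A$ anyway. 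The absorption is legitimate only in a regime such as $\eta\asymp e^{-s}$ (this is exactly how the lemma is invoked in the proof of Lemma~\ref{lem: special subspace}, where $\eta=e^{-\rho t}$, $s=\rho t$), which is far stronger than the stated hypothesis your argument leans on.

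The auxiliary devices for $\norm{Q}\le\eta^{-100}$ also do not hold up. Taking $Q=0,\lambda=0$ when $\norm{h}$ is large would require $\norm{h}\le C'\eta^{A-50}$, i.e.\ $C'\gtrsim\norm{h}\eta^{-(A-50)}$, which is not of the permitted form. And Minkowski-reducing $\Z v_1+\Z v_1'$ and $\Z v_2+\Z v_2'$ does not rescue the bound: the hypotheses give no control of $h$ on the short basis vectors (the change of basis from $(v_1,v_1')$ to a reduced basis has entries of size $e^{O(s)}$, so the errors blow up), and the ``rewritten'' matrix is neither the $Q$ you estimated nor obviously one with $\norm{h-\lambda Q}$ small. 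The paper gets the norm bound a posteriori and much more simply: once $\norm{h-\lambda Q}$ is known to be small and $\absolute{\lambda}\gg\eta^{O(1)}$ (in your normalization $\absolute{\lambda}=\absolute{\mu}/\absolute{q\,\sqf(v_1,v_1')}\ge C^{-1}\eta^{20}$), one has $\norm{Q}\le\absolute{\lambda}^{-1}(\norm{h}+1)\ll\eta^{-20}\norm{h}$. That route is open to you as well, but only after the error estimate — which is exactly the missing piece.
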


\begin{proof}
Let us write $v_i=(\cox_i,\coy_i)$, $v_i'=(\cox_i',\coy_i')$, and $v_i''=(\cox_i'',\coy_i'')$. The matrix 
\[
M_1=\begin{pmatrix}\coy_1 & -\cox_1\\ \coy'_1\mathsf z_1 & -\cox'_1\mathsf z_1\end{pmatrix} \qquad \text{for } \mathsf z_1=\tfrac{\cox_1''\coy_1-\cox_1\coy_1''}{\cox_1''\coy_1'-\cox_1'\coy_1''}
\]
acting on $\mathbb P^1$ takes $(\cox_1:\coy_1)$ to $(0:1)$, $(\cox'_1:\coy'_1)$ to $(1:0)$ and $(\cox''_1:\coy''_1)$ to $(1:1)$. The matrix
\[
M_2=\begin{pmatrix}-\cox'_2\mathsf z_2 & \cox_2\\ -\coy'_2\mathsf z_2 & \coy_2\end{pmatrix} \qquad \text{for }\mathsf z_2=\tfrac{\cox_2''\coy_2-\cox_2\coy_2''}{\cox_2''\coy_2'-\cox_2'\coy_2''}
\]
in turn takes $(0:1)$ to $(\cox_2:\coy_2)$, $(1:0)$ to $(\cox'_2:\coy'_2)$ and $(1:1)$ to $(\cox''_2:\coy''_2)$.
By~\eqref{eq: Mobius}, we have that $r=\absolute{\det(M_1)\det(M_2)}^{-1}$ is a rational number of height $\ll \eta^{-20}$. Thus by~\eqref{eq: Mobius h} 
\begin{align*}
&h=\pm\sqrt{r}M_2M_1 + O(\eta^{A-50}) \quad\text{or}\\
&h=\pm\sqrt{r}\begin{pmatrix} 1 &0\\ 0 &-1\end{pmatrix} M_2M_1 + O(\eta^{A-50}).
\end{align*}
Since the denominators of the entries of $M_1$ and $M_2$ are bounded by $\eta^{-4}$, and since all our implicit constants are allowed to depend on $\norm{h}$, we may conclude the claim.
\end{proof}

We draw some corollaries of Lemma~\ref{lem: Mobius}. 

\begin{definition}\label{def: special}
Let $g=(g_1,g_2)\in G$.
A two dimensional $g\Z^4$-rational linear subspace $L\subset\R ^4$ is called $(\rho, A,t)$-exceptional if 
there are $(v_1,0), (0,v_2)\in\Z^4$ satisfying  
\be\label{eq: quasi-null 1}
\norm{g_1v_1}, \norm{g_2v_2}\leq e^{\rho t} \quad\text{and}\quad \absolute{\sqf(g_1v_1, g_2v_2)}\leq e^{-A\rho t}
\ee
so that $L \cap g\Z^4$ is spanned by $\{(g_1v_1,0), (0,g_2v_2)\}$.

Given a $(\rho, A,t)$-special subspace $L$, we will refer to $\{(g_1v_1,0), (0,g_2v_2)\}$ as a {\em spanning set} for $L$.   
\end{definition}

\begin{lemma}\label{lem: special subspace}
Let $A\geq 10^{3}$, and let $g=(g_1,g_2)\in G$. Let $\rho\leq A/100$. Then for all $t$ large enough, depending on $\norm{g}$, at least one of the following holds:
\begin{enumerate}
\item There are at most two different $(\rho, A, t)$-exceptional subspaces.
\item There exists $Q\in\Mat_2(\Z)$ whose entries are bounded by $e^{100\rho t}$ and $\lambda\in\R$ 
satisfying $\norm{g_2^{-1}g_1-\lambda Q}\leq e^{-(A-100)\rho t}$.
\end{enumerate} 
\end{lemma}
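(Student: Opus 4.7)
\medskip

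\noindent\textbf{Proof proposal.} The plan is to argue by contraposition: assume there are three distinct $(\rho,A,t)$-exceptional subspaces $L_1,L_2,L_3$, with spanning data given by primitive vectors $v_1^{(j)},v_2^{(j)}\in\Z^2$ for $j=1,2,3$, and show that conclusion (2) must hold. Throughout, all implicit constants are allowed to depend on $\norm{g}$, and $t$ is assumed large enough (depending on $g$) to absorb such constants into factors of the form $e^{\varepsilon\rho t}$.

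\textbf{Step 1: pairwise non-proportionality of the spanning vectors.} I will first show that the three $v_1^{(j)}$'s are pairwise non-proportional (and similarly for $v_2^{(j)}$'s). Suppose, for contradiction, that $v_1^{(1)}=\pm v_1^{(2)}$; since $L_1\neq L_2$ this forces $v_2^{(1)}\neq\pm v_2^{(2)}$, whence $\absolute{\sqf(v_2^{(1)},v_2^{(2)})}\geq 1$ by integrality. On the other hand, writing $u=g_1v_1^{(1)}$ and decomposing $g_2v_2^{(j)}=\alpha_j u+\beta_j u^\perp$, the exceptional condition $\absolute{\sqf(g_1v_1^{(1)},g_2v_2^{(j)})}\leq e^{-A\rho t}$ together with $\norm{u}\geq\norm{g_1^{-1}}^{-1}$ yields $\absolute{\beta_j}\ll_g e^{-A\rho t}$, while $\absolute{\alpha_j}\leq\norm{u}^{-1}\norm{g_2v_2^{(j)}}\ll_g e^{\rho t}$. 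Therefore
\[
\absolute{\sqf(g_2v_2^{(1)},g_2v_2^{(2)})}
=\absolute{\alpha_1\beta_2-\alpha_2\beta_1}\cdot\norm{u}^2
\ll_g e^{-(A-3)\rho t},
\]
and by $\SL_2$-invariance of $\sqf$ this equals $\absolute{\sqf(v_2^{(1)},v_2^{(2)})}\geq 1$, a contradiction for $t$ large since $A\geq 10^3$. The symmetric argument rules out $v_2^{(1)}=\pm v_2^{(2)}$.

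\textbf{Step 2: setting up Lemma~\ref{lem: Mobius}.} Set $h:=g_2^{-1}g_1$; by the $\SL_2$-invariance of $\sqf$, the exceptional condition becomes
\[
\absolute{\sqf(hv_1^{(j)},v_2^{(j)})}
=\absolute{\sqf(g_1v_1^{(j)},g_2v_2^{(j)})}\leq e^{-A\rho t}.
\]
Writing $hv_1^{(j)}=\mu_jv_2^{(j)}+w_j$ with $w_j\perp v_2^{(j)}$, one obtains $\norm{w_j}\leq e^{-A\rho t}/\norm{v_2^{(j)}}\leq e^{-A\rho t}$ and $\absolute{\mu_j}\geq c_g e^{-\rho t}$ (from $\norm{hv_1^{(j)}}\geq\norm{h^{-1}}^{-1}$ and $\norm{v_2^{(j)}}\leq\norm{g_2^{-1}}e^{\rho t}$). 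I now apply Lemma~\ref{lem: Mobius} with $s=\rho t+O_g(1)$ chosen so the six primitive vectors lie in $\pcal(s)$, with $\eta=e^{-\rho t}$, and with constant $C=C_g e^{\rho t}$. The hypotheses check out: $\eta^A=e^{-A\rho t}<e^{-s/100}$ since $A\geq 10^3$; the pairings $\sqf(v_i^{(j)},v_i^{(k)})$ for $j\neq k$ are nonzero integers bounded by $e^{2s}\ll\eta^{-4}$ thanks to Step~1; and the bounds on $w_j,\mu_j$ are consistent with the chosen $C$.

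\textbf{Step 3: extracting conclusion (2).} Lemma~\ref{lem: Mobius} produces $Q\in\Mat_2(\Z)$ and $\lambda\in\R$ with $\norm{Q}\leq\eta^{-100}=e^{100\rho t}$ (giving the required bound on $Q$) and $\norm{h-\lambda Q}\leq C'\eta^{A-50}$, where $C'$ depends polynomially on $C=O_g(e^{\rho t})$ and on $\norm{h}=O_g(1)$. Inspecting the proof of Lemma~\ref{lem: Mobius}, the factors $\sqrt{r}\ll\eta^{-10}$ and the denominators of $M_1M_2$ ($\ll\eta^{-8}$) bound $C'$ by $O_g(\eta^{-k})$ for some fixed small $k$, so for $t$ large (depending on $g$) one gets
\[
\norm{g_2^{-1}g_1-\lambda Q}=\norm{h-\lambda Q}\leq e^{-(A-50-k-\varepsilon)\rho t}\leq e^{-(A-100)\rho t},
\]
which is conclusion (2).

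\textbf{Main obstacle.} The most delicate point is the bookkeeping in Step~3: verifying that the polynomial dependence of $C'$ on $C$ and $\norm{h}$ in Lemma~\ref{lem: Mobius} actually fits inside the $50$-unit buffer between the exponents $A-50$ (produced by that lemma) and $A-100$ (required here). This requires opening the proof of Lemma~\ref{lem: Mobius} to track the contributions of $\sqrt{r}$ and of the denominators of the matrices $M_1,M_2$, and then choosing the threshold for ``$t$ large enough'' to dominate the remaining $g$-dependent constants. The first step is a short elementary computation and the second step is a direct invocation, so the rest of the proof is routine.
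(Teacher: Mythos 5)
Your argument is essentially the paper's: assume three distinct $(\rho,A,t)$-exceptional subspaces, pass to $h=g_2^{-1}g_1$, verify the hypotheses of Lemma~\ref{lem: Mobius} with $\eta=e^{-\rho t}$ and $s=\rho t+O_g(1)$, and use the slack between the exponents $A-50$ and $A-100$ to absorb constants; your Step~1 moreover supplies the pairwise non-proportionality (hence the lower bound $1\le\absolute{\sqf(\cdot,\cdot)}$ in \eqref{eq: Mobius}) that the paper's proof simply asserts. The one real difference is the treatment of the coefficients $\mu_j$: the paper claims $\absolute{\mu}\gg_g 1$ and so invokes Lemma~\ref{lem: Mobius} as a black box with a constant $C$ (and exponent $A-1$), whereas your bound $\absolute{\mu_j}\gg_g e^{-\rho t}$ --- which is what the definition actually yields, since $\norm{v_2^{(j)}}$ may be as large as $\asymp_g e^{\rho t}$ --- forces $C\asymp_g e^{\rho t}$, a regime the statement of Lemma~\ref{lem: Mobius} does not literally cover because it does not quantify how $C'$ depends on $C$; you flag exactly this and resolve it by inspecting that lemma's proof, where every quantity is controlled by fixed powers of $\eta^{-1}$, so the extra loss fits inside the $50\rho t$ buffer. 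Modulo actually carrying out that inspection, your write-up is correct, and on this point it is if anything more careful than the paper's own argument.
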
  

\begin{proof}
We begin by proving the first assertion in the lemma. 
Let $\eta=e^{-\rho t}$ and $s=\rho t$. 
Indeed assume there are three different $(\rho, A, t)$-special subspaces in $\R^4$, and let $v_i, v_i', v_i''\in\pcal_{s}$, $i=1,2$, be the corresponding spanning vectors. 
Then 
\[
1< |\sqf(v,w)|\ll e^{2\rho t},\qquad \text{for $v,w\in\{v_1,v'_1,v_1''\}$}
\]
That is, $\{v_1,v_1',v_1''\}$ satisfies \eqref{eq: Mobius} with $\eta=e^{-\rho t}$ so long as $t$ is large enough to account for the implied constant.  
Moreover, if we put $h=g_2^{-1}g_1$, then   
\[
hv_1=\mu v_2 + w_{1,2}
\]
where $\mu\in\R$ satisfies $\absolute{\mu}\gg 1$ and 
$\norm{w_{1,2}}\ll e^{-A\rho t}=\eta^{(A-1)}e^{-s}$ (recall that the implicit constants in these inequalities are allowed to depend polynomially on $\norm{g_1}$ and $\norm{g_2}$). Similarly, 
\[
hv'_1=\mu' v'_2 + w'_{1,2}\quad\text{and}\quad hv''_1=\mu'' v''_2 + w''_{1,2}
\]  
where $\mu',\mu''\in\R$ satisfy $\absolute{\mu'},\absolute{\mu''}\gg 1$ and 
$\norm{w'_{1,2}}, \norm{w''_{1,2}}\ll e^{-A\rho t}$. 
Therefore, $\{v_2,v_2',v_2''\}$ also satisfy \eqref{eq: Mobius}. Moreover, 
$h=g_2^{-1} g_1$ satisfies~\eqref{eq: Mobius h} with $A-1$, $\eta$, and $s$, in view of the above discussion. Hence, Lemma~\ref{lem: Mobius} implies that the assertion in part~(2) of this lemma holds so long as $t$ is large enough.   
\end{proof}

\subsection*{Special subspaces and the spectrum of flat tori}
Using the discussion in \S\ref{sec: quad form}, we translate the conclusion of Lemma~\ref{lem: special subspace} to a similar statement about the quadratic form $B_\torus$.

\begin{lemma}\label{lem: super exceptional}
Let $A\geq 10^{4}$, and let $\rho\leq A/100$. 
Recall that 
\[
B_\torus(\cox, \coy)=\mathsf a\cox^2+2\mathsf b \cox\coy+\mathsf c\coy^2
\]
is renormalized so that $\mathsf a\mathsf c-\mathsf b^2=1$. 
Then for all $t\geq t_0$, depending on $\rho$, $\absolute{\mathsf a}$, $\absolute{\mathsf b}$, and $\absolute{\mathsf c}$, at least one of the following holds:
\begin{enumerate}
\item There is a unique, up to change of order, pair of primitive vectors $u_1, u_2\in\Z^2\setminus\{0\}$ satisfying 
\[
\norm{u_i}\leq e^{\rho t}\quad\text{and}\quad |B_\torus( u_1,u_2)|\leq e^{-A\rho t}
\]
    \item There exists $Q\in\Mat_2(\Z)$ whose entries are bounded by $e^{100\rho t}$ and $\lambda\in\R$ 
satisfying 
\[
\norm{\begin{pmatrix}\mathsf a & \mathsf b\\ \mathsf b &\mathsf c\end{pmatrix}-{\lambda}Q}\leq e^{-(A-100)\rho t}.
\]
\end{enumerate}

In particular, if $\torus$ is a rectangular torus, i.e., $\mathsf b=0$, then $t_0$ may be chosen so that
if part~(2) is not satisfied, then (up to changing the order) $u_1=(1,0)$ and $u_2=(0,1)$.  
\end{lemma}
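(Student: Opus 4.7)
\emph{Setup and translation.} Choose $g_\torus \in \SL_2(\R)$ with $g_\torus^{\,T} g_\torus = M_\torus := \begin{pmatrix}\mathsf a & \mathsf b\\ \mathsf b & \mathsf c\end{pmatrix}$ (possible since $M_\torus$ is positive definite of determinant $1$), so that $B_\torus(u,v) = \langle g_\torus u, g_\torus v\rangle$. Consider the element
\[
g = (g_1,g_2) := (g_\torus,\, \omega g_\torus) \in G.
\]
Using the elementary identity $\sqf(a,\omega b)=\langle a,b\rangle$ for $a,b\in\R^2$, one obtains the dictionary
\[
\sqf(g_\torus u_1,\, \omega g_\torus u_2) \;=\; B_\torus(u_1,u_2), \qquad \|g_\torus u\| = \sqrt{B_\torus(u,u)} \asymp \|u\|,
\]
where the implied constants depend only on $\|g_\torus^{\pm 1}\|$ (hence on $|\mathsf a|,|\mathsf b|,|\mathsf c|$). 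Consequently, for $t$ large enough (depending on $\|g_\torus^{\pm 1}\|$), any pair of primitive $u_1,u_2\in\Z^2$ with $\|u_i\|\le e^{\rho t}$ and $|B_\torus(u_1,u_2)|\le e^{-A\rho t}$ gives rise to a $(\rho',A',t)$-exceptional subspace of $\R^4$ for the lattice $g\Z^4$, where $\rho'$ and $A'$ differ from $\rho$ and $A$ by an additive $O(1/t)$; absorbing this slack is the only nuisance (and is comfortable thanks to the threshold $A\ge 10^4$).

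\emph{Applying Lemma~\ref{lem: special subspace} to $g=(g_\torus,\omega g_\torus)$.} The lemma produces one of two alternatives.

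\emph{Case 1: at most two $(\rho,A,t)$-exceptional subspaces.} An exceptional subspace is determined by the two coordinate lines $L\cap(\R^2\oplus 0)$ and $L\cap(0\oplus\R^2)$, hence by an ordered pair of primitive directions $(\pm u_1,\pm u_2)$. By symmetry of $B_\torus$, the swapped pair $(u_2,u_1)$ also satisfies the required bounds, so it produces a second exceptional subspace, distinct from the first provided $u_1\ne\pm u_2$. But the latter inequality is automatic for $t$ large: if $u_1=\pm u_2$ then $|B_\torus(u_1,u_2)|=B_\torus(u_1)\ge \|g_\torus^{-1}\|^{-2}>0$, violating $|B_\torus(u_1,u_2)|\le e^{-A\rho t}$. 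Hence two exceptional subspaces correspond to a single unordered pair, giving conclusion~(1).

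\emph{Case 2: $\|g_2^{-1}g_1-\lambda Q\|\le e^{-(A-100)\rho t}$ for some $Q\in\Mat_2(\Z)$ of norm $\le e^{100\rho t}$ and some $\lambda\in\R$.} A direct computation (using $g_\torus^{-1}\omega g_\torus=\omega M_\torus$, which one verifies from $ps-qr=1$ and the definition of $M_\torus$) yields
\[
g_2^{-1}g_1 \;=\; g_\torus^{-1}\omega^{-1}g_\torus \;=\; -\omega M_\torus.
\]
Multiplying the estimate on the left by $-\omega^{-1}$ (an isometry) gives $\|M_\torus-\lambda Q'\|\le e^{-(A-100)\rho t}$ with $Q':=\omega Q\in\Mat_2(\Z)$ of norm $\le e^{100\rho t}$, which is conclusion~(2).

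\emph{Rectangular case.} When $\mathsf b=0$, the vectors $u_1=(1,0),\, u_2=(0,1)$ satisfy $\|u_i\|=1\le e^{\rho t}$ and $B_\torus(u_1,u_2)=\mathsf b=0\le e^{-A\rho t}$, so this pair is always a valid one. Hence if conclusion~(2) fails, the uniqueness established in conclusion~(1) forces $\{u_1,u_2\}=\{(1,0),(0,1)\}$.

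\emph{Main obstacle.} The conceptual content is the identification $\sqf(g_\torus u_1,\omega g_\torus u_2)=B_\torus(u_1,u_2)$, which reduces everything to Lemma~\ref{lem: special subspace}. The only technical care required is tracking the small losses $\rho\mapsto\rho'$ and $A\mapsto A'$ needed to pass between $\|u\|$ and $\|g_\torus u\|$; these are absorbed by enlarging $t_0$ in terms of $\|g_\torus^{\pm 1}\|$ and by the generous gap in the hypothesis $A\ge 10^4$ (compared with the $A\ge 10^3$ in Lemma~\ref{lem: special subspace}).
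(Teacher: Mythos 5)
Your proposal is correct and follows essentially the paper's route: translate the pair $(u_1,u_2)$ into a spanning set of a $(\rho,A,t)$-exceptional subspace for a suitable $g\in G$, invoke Lemma~\ref{lem: special subspace}, use the symmetry of $B_\torus$ to show each admissible unordered pair accounts for two distinct exceptional subspaces (hence uniqueness in alternative (1)), and transfer alternative (2) back to the matrix $\begin{pmatrix}\mathsf a & \mathsf b\\ \mathsf b &\mathsf c\end{pmatrix}$. The paper takes $g=(M_\torus,1)$ with the identity $B_\torus(u_1,u_2)=\sqf\bigl(M_\torus u_1,\omega u_2\bigr)$, so that $g_2^{-1}g_1=M_\torus$ on the nose, whereas you take $g=(g_\torus,\omega g_\torus)$ and need the (correct) computations $g_2^{-1}g_1=-\omega M_\torus$ and the $\omega$-twist of $Q$; this is only a cosmetic difference, and the norm-distortion slack you flag is handled at the same level of rigor as in the paper, which likewise absorbs $\norm{g_1}$ into the ``$t$ large'' threshold. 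The one genuinely different step is the rectangular case: you simply observe that for $\mathsf b=0$ the pair $(1,0),(0,1)$ always satisfies the bounds (with $B_\torus$-value $0$), so the uniqueness already established forces any admissible pair to be this one; the paper instead argues directly that $\coy_1\coy_2=\cox_1\cox_2=0$ via a rational-approximation contradiction with alternative (2). Your version is shorter and buys the same conclusion, at the cost of relying on the existence of the trivial pair rather than analyzing an arbitrary one.
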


\begin{proof}
Let $t_1$ be large enough so that Lemma~\ref{lem: special subspace} holds for all $t\geq t_1$. 
Since $B_\torus$ is positive definite, there exists $t'_0$ so that if $t\geq t'_0$, then 
\[
\absolute{B_\torus(u_1,u_2)}<e^{-A\rho t}
\]
implies that $\{u_1,u_2\}$ is linearly independent. 

Let $t_0=\max(t_1,t'_0)$ and let $t\geq t_0$. Put
\[
g=(g_1,1)=\left(\begin{pmatrix}\mathsf a & \mathsf b\\ \mathsf b & \mathsf c\end{pmatrix}, 1\right). 
\]
Note that if part~(2) in Lemma~\ref{lem: special subspace} holds, then part~(2) in this lemma holds and the proof is complete. Thus let us assume that part~(1) in Lemma~\ref{lem: special subspace} holds. 

Let $u_i=(\cox_i,\coy_i)\in\Z^2\setminus\{0\}$. Then  
\begin{align*}
B_\torus(u_1,u_2)&=(\cox_1,\coy_1)\begin{pmatrix}\mathsf a & \mathsf b\\ \mathsf b & \mathsf c\end{pmatrix}
\begin{pmatrix}\cox_2 \\ \coy_2\end{pmatrix}\\
&=(\mathsf a\cox_1+\mathsf b\coy_1)\cox_2+(\mathsf b\cox_1+\mathsf c\coy_1)\coy_2\\
&=\left(\begin{pmatrix}\mathsf a & \mathsf b\\ \mathsf b & \mathsf c\end{pmatrix}\begin{pmatrix}\cox_1 \\ \coy_1\end{pmatrix}\right)\wedge \begin{pmatrix}-\coy_2 \\ \cox_2\end{pmatrix}=\sqf\bigl(g_1(\cox_1,\coy_1), (-\coy_2,\cox_2)\bigr).
\end{align*}
Thus if $u_1, u_2$ satisfy part~(1), then $(g_1(\cox_1,\coy_1), (0, 0))$ and $((0, 0),(-\coy_2,\cox_2))$ span a $(\rho, A, t)$-special subspace for $g\Z^4$.

By Lemma~\ref{lem: special subspace}, there is at most two such subspaces. 
Moreover, since $B_\torus(\;,\;)$ is symmetric, we conclude that 
\[
\sqf(g_1(\cox_2,\coy_2),(-\coy_1,\cox_1))=\sqf\bigl(g_1(\cox_1,\coy_1), (-\coy_2,\cox_2)\bigr).
\]
This implies the two special subspaces are spanned by 
\begin{align*}
&\{(g_1(\cox_1,\coy_1), 0, 0), (0,0,-\coy_2,\cox_2)\}\quad\text{or}
&\{(g_1(\cox_2,\coy_2), 0, 0), (0,0,-\coy_1,\cox_1)\}.
\end{align*}
This shows part~(1) in this lemma holds.

Assume now that $\mathsf b=0$, and suppose part~(2) does not hold. Let $u_i$ be as in part~(1). Then 
\be\label{eq: approximate c}
\absolute{B_\torus(u_1,u_2)}=\absolute{\mathsf a\cox_1\cox_2+\mathsf a^{-1}\coy_1\coy_2}\leq e^{-A\rho t}. 
\ee
Unless $\coy_1\coy_2=0$, the above contradicts that part~(2) does not hold.
Therefore, we have $\coy_1\coy_2=0$. Assuming $t$ is large enough so that the right side in~\eqref{eq: approximate c} is $<\absolute{\mathsf a}$, we conclude $\cox_1\cox_2=0$ and the claim follows.   
\end{proof}

The following lemma further investigates the contribution of special subspaces, or more precisely, vectors $u_1, u_2$ satisfying part~(1) in Lemma~\ref{lem: super exceptional}. We note that condition~\eqref{eq: special but not exceptional} is~\eqref{eq: exceptional thm main} in Theorem~\ref{thm: main}.

\begin{lemma}\label{lem: special subspace 2}
Let $A\geq 10^3$ and $0<\rho<1/(100 A)$.
Let 
\[
B_\torus(\cox, \coy)=\mathsf a\cox^2+2\mathsf b \cox\coy+\mathsf c\coy^2
\]
which is normalized so that $\mathsf a\mathsf c-\mathsf b^2=1$.
The following holds for all large enough $t$, depending on $\rho$, $\absolute{\mathsf a}$, $\absolute{\mathsf b}$, and $\absolute{\mathsf c}$. 
Let $u_1, u_2\in\Z^2\setminus\{0\}$ satisfy 
\[\norm{u_i}\leq e^{\rho t} \qquad\text{and}\qquad \absolute{B_\torus(u_1,u_2)}\leq e^{-A\rho t}.\]
Assume further that   
\be\label{eq: special but not exceptional}
\absolute{B_\torus(u_1, u_2)}> e^{(-2+2\rho)t}. 
\ee
Let $C>0$, then 
\[
\#\left\{(\ell_1,\ell_2)\in \tfrac12 \Z^2: \begin{array}{l}\absolute{\ell_i}\leq Ce^{t}\qquad\\ \ 4{B_\torus(u_1,u_2)}\ell_1\ell_2 \in {[\alpha,\beta]}\end{array}\right\}\ll \max(\absolute{\alpha},\absolute{\beta}) e^{(2-\rho) t}
\] 
where the implied constant depends on $C$, ${\mathsf a}$, ${\mathsf b}$, and ${\mathsf c}$. 
\end{lemma}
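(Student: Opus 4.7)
The strategy is a straightforward divisor/lattice-point count, leveraging the lower bound on $|B_\torus(u_1,u_2)|$ supplied by~\eqref{eq: special but not exceptional} to control the length of the relevant interval.

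Write $B := B_\torus(u_1,u_2)$ and substitute $m_i = 2\ell_i \in \Z$. The quantity to be bounded then becomes the number of pairs $(m_1,m_2) \in \Z^2$ with $|m_i| \leq 2Ce^t$ and $Bm_1m_2 \in [\alpha,\beta]$, i.e., $m_1 m_2 \in J := B^{-1}[\alpha,\beta]$. From~\eqref{eq: special but not exceptional},
\begin{equation*}
|J| \;=\; \frac{\beta-\alpha}{|B|} \;\leq\; 2\max(|\alpha|,|\beta|)\,e^{(2-2\rho)t},
\qquad J \subseteq [-K,K] \text{ with } K \leq \max(|\alpha|,|\beta|)\,e^{(2-2\rho)t}.
\end{equation*}

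For each nonzero $m_1 \in \Z$, the set $\{m_2 \in \Z : m_1m_2 \in J\}$ lies in an interval of length $|J|/|m_1|$, hence has at most $|J|/|m_1|+1$ elements. If additionally $m_1 m_2 \neq 0$, then $m_1m_2 \in [-K,K]$ forces $|m_1|\leq K$. Summing over $1 \leq |m_1| \leq \min(2Ce^t,K)$ and using $\sum_{1\leq k\leq N} 1/k \leq 1+\log N$ with $\log N \ll t$,
\begin{equation*}
\sum_{1\leq |m_1|\leq \min(2Ce^t,K)} \Bigl(\tfrac{|J|}{|m_1|}+1\Bigr) \;\ll\; |J|\cdot t \;+\; \min(e^t, K).
\end{equation*}
The first term is $\ll \max(|\alpha|,|\beta|)\cdot t\,e^{(2-2\rho)t} \ll \max(|\alpha|,|\beta|)\,e^{(2-\rho)t}$ once $t$ is sufficiently large (depending on $\rho$), since $t = o(e^{\rho t})$. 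For the second term: if $K\leq e^t$ then $\min(e^t,K)=K\leq \max(|\alpha|,|\beta|)\,e^{(2-2\rho)t}$; if $K>e^t$ then $\max(|\alpha|,|\beta|) > e^{(2\rho-1)t}$, and so $\max(|\alpha|,|\beta|)\,e^{(2-\rho)t} > e^{(1+\rho)t} \gg e^t$. Either way the contribution is absorbed into the target bound.

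Finally, pairs with $\ell_1\ell_2=0$ only arise when $0\in[\alpha,\beta]$, and contribute at most $O(e^t)$ (counted by $\ell_1=0$, any $\ell_2$, plus its symmetric case), which is again absorbed into $\max(|\alpha|,|\beta|)\,e^{(2-\rho)t}$ in the regime of interest where $\alpha,\beta$ are fixed of order $1$ as $t\to\infty$. I do not anticipate any serious obstacle; the only point requiring a touch of care is the case split on $K$ described above, whose purpose is to keep the ``$+1$'' terms in the inner count from exceeding the target bound --- and this is precisely why the slack $e^{\rho t}$ between $t\,e^{(2-2\rho)t}$ and $e^{(2-\rho)t}$ is built into the statement.
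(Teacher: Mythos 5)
Your proof is correct and follows essentially the same route as the paper: discard the $\ell_1\ell_2=0$ pairs ($\ll e^t$), use \eqref{eq: special but not exceptional} to reduce to $0<4\absolute{\ell_1\ell_2}\leq \max(\absolute{\alpha},\absolute{\beta})e^{(2-2\rho)t}$, and finish with the divisor-type count $\ll \max(\absolute{\alpha},\absolute{\beta})\,t\,e^{(2-2\rho)t}\ll \max(\absolute{\alpha},\absolute{\beta})e^{(2-\rho)t}$. The only cosmetic difference is your ``$+1$'' bookkeeping and case split on $K$, which the paper sidesteps by bounding $\absolute{\ell_1\ell_2}$ directly instead of working with the shifted interval $J$.
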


\begin{proof} 
Let $(\ell_1, \ell_2)$ satisfy that
$|\ell_i|\leq C e^t$ and 
\be\label{eq: compute sqrf v}
4B_\torus(u_1,u_2)\ell_1\ell_2 \in [\alpha,\beta].
\ee
Then the number of solutions with $\ell_1=0$ or $\ell_2=0$ is $\ll e^t$. Therefore, we assume $\ell_i\neq0$ for $i=1,2$ for the rest of the argument. 

Assume that  
\[
\absolute{B_\torus(u_1,u_2)}> e^{(-2+2\rho)t}.
\]
Then~\eqref{eq: compute sqrf v} implies that 
\be\label{eq: prod r1r2}
0 < 4\absolute{\ell_1\ell_2}\leq \max(\absolute{\alpha}, \absolute{\beta}) e^{(2-2\rho)t}.
\ee
The number of $(\ell_1,\ell_2)\in\Z^2$ with $0<\absolute{\ell_1}\leq Ce^t$ so that~\eqref{eq: prod r1r2} holds is 
\[
\ll  \max(\absolute{\alpha}, \absolute{\beta}) t e^{(2-2\rho)t}\ll \max(\absolute{\alpha}, \absolute{\beta}) e^{(2-\rho)t}
\]
as we claimed. 
\end{proof}


\section{Circular averages and values of quadratic forms}\label{sec: proofs}
In this section, we state an equidistribution result for the action of $\SO(\sqf)$. 
Theorem~\ref{thm: main} will be deduced from this equidistribution theorem in \S\ref{sec: proof of main} using some preparatory lemmas which will be established in this section.   

Let $f_i$ be compactly supported bounded Borel functions on $\R^2$, and define $f$ on $\R^4$ by $f(w_1,w_2)=f_1(w_1)f_2(w_2)$.  
For any $g'\in G$, let
\be\label{eq: def hat f}
\hat f(g'\Gamma')=\sum_{v\in g'\Lambda_{\rm nz}} f(v)
\ee
where 
\begin{align*}
&\Lambda = \{(v_1+v_2, \omega(v_1-v_2)): v_1,v_2\in \Z^2\}\subset \R^4,\\  
&\Lambda_{\rm nz}=\{(w_1,w_2)\in\Lambda: w_1\neq 0\text{ and } w_2\neq 0\}\\
&\Gamma'= \{(\gamma_1,\gamma_2)\in\SL_2(\Z)\times\SL_2(\Z): \gamma_1 \equiv \omega \gamma_2 \omega \pmod{2}\},
\end{align*}
and $\omega=\begin{pmatrix}0 & -1\\ 1& 0\end{pmatrix}$. Note that $\Gamma'$ preserves $\Lambda$ and $\Lambda_{\rm nz}$. 

Let $X=G/\Gamma'$, and let $m_X$ denote the $G$-invariant probability measure on $X$.

\begin{thm}\label{thm: equi of f hat}
For every $A\geq 10^4$ and $0<\rho\leq 10^{-4}$, there exist $\hat A$ (depending on $A$) and $\delta_1, \delta_2$ (depending on $\rho$ and $A$) with 
\[
\rho/\hat A\leq \delta_1/A\leq \rho/100,
\]
so that for all $g=(g_1,g_2)\in G$ and all large enough $t$, depending linearly on $\log(\norm{g_i})$, the following holds. 

Assume that for every $Q\in\Mat_2(\Z)$ with $e^{\rho t/\hat A}\leq \norm{Q}\leq e^{\rho t}$ and all $\lambda\in\R$, we have  
\be\label{eq: Diophantine main them Q0}
\norm{g_2^{-1}g_1-{\lambda}Q}>\norm{Q}^{-A/1000}.
\ee
There exists some $C'$ depending on $A$ and polynomially on $\norm{g_i}$ so that the following holds. For any $2\pi$-periodic smooth function $\xi$ on $\R$, if
\begin{equation*}
\biggl|\int_0^{2\pi}\hat f(\Delta(a_t\rot_\theta) g\Gamma')\xi(\theta)\diff\!\theta-\int_0^{2\pi}\xi\diff\!\theta\int_X\hat f\diff\!m_X\biggr|
> C'\Sob(f)\Sob(\xi)e^{-\delta_2 t}
\end{equation*}
then there are at least one, and at most two, $(\delta_1/A, A, t)$-exceptional subspaces, say $L$ and  $L'$ (for notational convenience, if there is only one exceptional subspace, set $L'=L$). Moreover  
\begin{multline*}
\int_0^{2\pi}\hat f(\Delta(a_t\rot_\theta) g\Gamma')\xi(\theta)\diff\!\theta=\int_0^{2\pi}\xi\diff\!\theta\int_X\hat f\diff\!m_X \\
+ \mathcal M + O(\Sob(f)\Sob(\xi)e^{-\delta_2 t})
\end{multline*}
where 
\[
\mathcal M=\int_{\mathcal C}\hat f_{\rm sp}(\theta)\xi(\theta)\diff\!\theta
\]
with 
\begin{align*}
\hat f_{\rm sp}(\theta)&=\!\!\!\sum_{v\in g\Lambda_{\rm nz} \cap (L\cup L')} f(\Delta(a_t r_\theta)v)\\
\mathcal C&=\left\{\theta\in[0,2\pi]: \hat f_{\rm sp}(\theta)\geq e^{\delta_1 t}\right\}
\end{align*}
\end{thm}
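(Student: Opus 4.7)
The plan is to deduce Theorem~\ref{thm: equi of f hat} from Theorem~\ref{thm: r effective equid} by decomposing the Siegel-type sum defining $\hat f$ into a bounded, compactly supported piece amenable to smooth equidistribution and an ``exceptional'' piece coming from short vectors in $g\Lambda$. Throughout the argument, one must track the interaction between the product structure $f = f_1 \otimes f_2$, the skew-form $\sqf$, and the Diophantine hypothesis on $g_2^{-1}g_1$.

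First I would introduce a height function $\alpha\colon X \to [1,\infty)$ (essentially a regularization of $\max\{\|v\|^{-1} : v \in g'\Lambda_{\rm nz} \cap \supp(f)\}$) and truncate $\hat f$ smoothly at level $\eta^{-1}$ with $\eta = e^{\rho t/\hat A}$. This gives $\hat f = \hat f_{\rm comp} + \hat f_{\rm tail}$, where $\hat f_{\rm comp}$ is smooth, compactly supported, and satisfies $\Sob(\hat f_{\rm comp}) \ll \Sob(f)\cdot\eta^{-C_0}$ for an absolute constant $C_0$, while $\hat f_{\rm tail}$ is supported where the lattice is deep in the cusp. Applying Theorem~\ref{thm: r effective equid} to $\hat f_{\rm comp}$ with the parameter there chosen as a suitable small power of $e^{\rho t}$ then yields either the main term with error $\Sob(f)\Sob(\xi)e^{-\delta_2 t}$, or the existence of a periodic $H$-orbit $Hx_0$ of volume at most $R$ within distance $R^Dt^De^{-t}$ of $g\Gamma'$.

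A periodic orbit of $H = \Delta(\SL_2(\R))$ in $X$ corresponds to $g_0 = (h_1,h_2)$ with $h_2^{-1}h_1$ in the commensurator $\GL_2(\Q)$ of $\SL_2(\Z)$; writing this as $\lambda Q$ for primitive $Q \in \Mat_2(\Z)$, the orbit volume is comparable to a fixed power of $\|Q\|$. Proximity of $g\Gamma'$ to $Hx_0$ then translates into $\|g_2^{-1}g_1 - \lambda Q\| \ll \|Q\|^{-A/1000}$ for some such $\lambda, Q$. The Diophantine hypothesis~\eqref{eq: Diophantine main them Q0} excludes $\|Q\| \in [\eta, e^{\rho t}]$; the choice of threshold $R$ rules out $\|Q\| > e^{\rho t}$; and the remaining range $\|Q\| < \eta$ is precisely what produces exceptional subspaces in the sense of Definition~\ref{def: special}, with at most two such by Lemma~\ref{lem: special subspace}. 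In parallel, the cusp sum $\hat f_{\rm tail}$ is analyzed directly: a vector $v = (v^{(1)},v^{(2)}) \in g\Lambda_{\rm nz}$ contributes only when $\Delta(a_tr_\theta)v \in \supp(f_1)\times\supp(f_2)$ with one of the two factors short, which via the explicit action of $a_tr_\theta$ forces $v^{(1)}$ and $v^{(2)}$ to be nearly $\sqf$-orthogonal at scale $e^{-A\rho t}$. Passing back through the linear parametrization $(v_1',v_2')\mapsto (v_1'+v_2',\omega(v_1'-v_2'))$ of $\Lambda$ exhibits primitive $v_1,v_2 \in \Z^2$ with $\|g_1v_1\|,\|g_2v_2\| \leq e^{\rho t}$ and $|\sqf(g_1v_1,g_2v_2)|$ exponentially small---exactly the spanning data of a $(\delta_1/A,A,t)$-exceptional subspace. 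This identifies, up to admissible errors, the tail contribution with $\hat f_{\rm sp}$ on $\mathcal C$; the complement $\mathcal C^c$ contributes only through values below threshold $e^{\delta_1 t}$ which are attained on a set whose measure is itself polynomially small, so that the total residue is $\ll e^{-\delta_2 t}$ provided $\delta_2$ is chosen sufficiently smaller than $\delta_1$.

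The principal obstacle will be the quantitative bookkeeping. One needs $\hat A$, $\delta_1$, $\delta_2$ calibrated so that (i) the Sobolev blowup $\eta^{-C_0}$ of $\hat f_{\rm comp}$ is beaten by the gain $e^{-\kappa_0 t}$ from Theorem~\ref{thm: r effective equid}; (ii) the set of $\theta$ for which $\Delta(a_tr_\theta)g\Gamma'$ enters the cusp beyond level $\eta^{-1}$ but for which the short vector does \emph{not} come from one of the (at most two) exceptional subspaces has polynomially small measure; and (iii) the passage from ``periodic $H$-orbit of volume $\leq R$ nearby'' to ``rational approximant with $\|Q\|$ in a range excluded by~\eqref{eq: Diophantine main them Q0}'' preserves polynomial quality, so that the hypothesis applies cleanly to the resulting $Q$. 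In practice, $\hat A$ will need to be chosen as a large multiple of $D/\kappa_0$, $\delta_1$ as a fixed multiple of $\rho A/\hat A$, and $\delta_2$ strictly smaller than both.
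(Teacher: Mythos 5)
There is a genuine gap, and it sits exactly where the paper's real work is. Your route attributes the appearance of exceptional subspaces to the small-$\norm{Q}$ case of the periodic-orbit alternative in Theorem~\ref{thm: r effective equid}. This is wrong on two counts. First, that case does not survive the hypothesis~\eqref{eq: Diophantine main them Q0}: proximity to a periodic $H$-orbit of volume $\leq R=e^{\rho t/A_1}$ at distance $R^Dt^De^{-t}$ gives an approximation $\norm{g_2^{-1}g_1-\lambda Q}\ll e^{-t/2+O(\rho t)}$ with $\norm{Q}$ small, and one simply replaces $Q$ by an integer multiple $nQ$ with $\norm{nQ}$ in the window $[e^{\rho t/\hat A},e^{\rho t}]$; since $e^{-t/2}$ beats $\norm{nQ}^{-A/1000}$ by a wide margin, \eqref{eq: Diophantine main them Q0} excludes this case outright (this is the paper's sublemma). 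Second, and structurally more important, if the periodic-orbit alternative did occur you would lose the equidistribution of your compactly supported piece $\hat f_{\rm comp}$, so the asserted conclusion (main term plus $\mathcal M$ plus error) could not be derived at all: in the theorem the main term is always present, and the deviation is carried entirely by the cusp contribution of the (at most two) exceptional subspaces, which come from short vectors of $g\Lambda$, not from commensurator points near $g\Gamma'$.

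The second, decisive gap is your treatment of the tail. A vector contributing to $\hat f-\hat f_{\rm comp}$ at an angle $\theta$ only satisfies $\absolute{\sqf(\Delta(a_t\rot_\theta)v)}\ll 1$ from membership in $\supp f$; the exponential smallness $\absolute{\sqf(g_1v_1,g_2v_2)}\leq e^{-\delta_1 t}$ follows only when the \emph{product} of the flowed norms is exponentially small, and even then the pair spans a $(\delta_1/A,A,t)$-exceptional subspace only if in addition $\norm{g_iv_i}\leq e^{\delta_1 t/A}$ in the original lattice (Definition~\ref{def: special}). Nearly $\sqf$-orthogonal pairs with one long vector are perfectly possible and are precisely what must be controlled; your claim that the tail ``is'' $\hat f_{\rm sp}$ up to admissible errors silently discards them, and ``polynomially small measure'' of the bad set of $\theta$ is not sufficient because $\hat f$ is unbounded there --- one needs an integral bound. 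This is the content of Proposition~\ref{prop: main}: the Diophantine hypothesis enters a second time through Lemma~\ref{lem: not many solutions} (via the M\"obius Lemma~\ref{lem: Mobius}) to show there are $\ll e^{(2-\rho)n}$ admissible long vectors in each dyadic shell, and this counting is combined with the estimate $\int_0^{2\pi}\norm{a_t\rot_\theta w}^{-2-2\delta}\diff\!\theta\ll e^{4\delta t}\norm{w}^{-2-2\delta}$, the splitting into $\Theta_t(\delta)$, $\Theta_t'(\delta)$, $\mathcal B_t$, and Lemma~\ref{lem: L2 bound SL2} to get the $e^{-\delta_2 t}$ bound. Your proposal never invokes the Diophantine condition in the tail analysis; without it the desired bound is simply false (that failure mode is alternative~(2) of Proposition~\ref{prop: main}), so this step cannot be repaired by bookkeeping alone.
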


The proof of Theorem~\ref{thm: equi of f hat} will be completed in \S\ref{sec: proof of equi}; 
it relies on results in \S\ref{sec: equidistribution} and \S\ref{eq: upper bound}. 
The goal in the remaining parts of this section and \S\ref{sec: proof of main} is to complete the proof of Theorem~\ref{thm: main} 
using Theorem~\ref{thm: equi of f hat}. We will also explicate the proof of Corollary~\ref{cor: Stronger Diophantine} at the end of \S\ref{sec: proof of main}.  

Before proceeding, however, let us record an a priori, i.e., without assuming~\eqref{eq: Diophantine main them Q0}, 
upper bound for $\int_0^{2\pi}\hat f(\Delta(a_t\rot_\theta) g\Gamma')\diff\!\theta$.  

\begin{lemma}\label{lem: gen bd for int hat f}
For every $0<\eta<1$, there exists $t_{\eta}\ll\absolute{\log\eta}$ so that 
the following hold. 
Let $g=(g_1, g_2)\in G$ and $R\geq 1$; assume that $\norm{g_i}\leq R$.
Let $f_i$ be the characteristic function of $\{w\in\R^2: \norm{w}\leq R\}$, and put $f=f_1f_2$. 
\begin{enumerate}
\item For every $t\geq t_\eta$ we have  
\[
\int_0^{2\pi}\hat f(\Delta(a_t\rot_\theta) g\Gamma')\diff\!\theta \ll e^{\eta t}
\]  
\item Let $t\geq t_\eta$. Let $L\subset\R^4$ be a two dimensional subspace so that 
$L\cap g\Z^4$ is spanned by $\{(g_1v_1,0), (0,g_2v_2)\}$ for $(v_1,0), (0,v_2)\in\Z^4\setminus\{0\}$. Then 
\[
\int_{[0,2\pi]\setminus\mathcal C_L}\hat f_L(\theta)\diff\!\theta \ll e^{(-1+\eta) t}
\]
where $\hat f_L(\theta)=\sum_{v\in g\Lambda_{\rm nz}\cap L} f(\Delta(a_t r_\theta)v)$ and 
\[
\mathcal C_L=\{\theta\in[0,2\pi]: \hat f_L(\theta)\geq e^{\eta t}\}.
\] 
\end{enumerate}
The implied constants depend polynomially on $R$.
\end{lemma}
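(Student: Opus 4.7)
The plan is to unfold the $\theta$-integral by Fubini into a lattice sum and estimate each term by elementary polar-coordinate geometry. Writing
\[
\int_0^{2\pi}\hat f(\Delta(a_t\rot_\theta)g\Gamma')\diff\theta=\sum_{v\in g\Lambda_{\rm nz}}\mu(v),\qquad \mu(v):=\int_0^{2\pi}f(\Delta(a_t\rot_\theta)v)\diff\theta,
\]
and expressing $w=r(\cos\phi,\sin\phi)$ gives $a_t\rot_\theta w=(re^t\cos(\phi+\theta),re^{-t}\sin(\phi+\theta))$. Hence $\norm{a_t\rot_\theta w}\leq R$ is impossible for $r>Re^t$, automatic for $r\leq Re^{-t}$, and reduces to $|\cos(\phi+\theta)|\leq R/(re^t)$ (a $\theta$-set of measure $\ll R/(re^t)$) in between. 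Since $g_i\in\SL_2(\R)$ with $\norm{g_i^{\pm1}}\leq R$ gives $\norm{g_iu}\geq R^{-1}$ for all $u\in\Z^2\setminus\{0\}$, the case $\norm{w_i}\leq Re^{-t}$ disappears once $t>2\log R+O(1)$. Thus any $v=(w_1,w_2)\in g\Lambda_{\rm nz}$ with $\mu(v)>0$ satisfies $Re^{-t}<\norm{w_i}\leq Re^t$, and for any $\theta$ in the support of the integrand both $\rot_\theta w_i$ lie in the strip $\{|x|\leq Re^{-t},\ |y|\leq Re^t\}$; computing the wedge in these rotated coordinates yields the key parallelism constraint
\[
|w_1\wedge w_2|\leq Re^{-t}\bigl(\norm{w_1}+\norm{w_2}\bigr),
\]
together with the single-coordinate bound $\mu(v)\ll R/(\max(\norm{w_1},\norm{w_2})\,e^t)$.

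For part~(1) we combine these with a dyadic decomposition $\norm{w_1}\in[2^k,2^{k+1})$, $\norm{w_2}\in[2^{k'},2^{k'+1})$ for $2^k,2^{k'}\in(Re^{-t},Re^t]$. With $w_2=g_2\omega u'$ fixed in its shell, the parallelism constraint forces $w_1=g_1u\in g_1\Z^2$ into a slab transverse to $w_2$ of thickness $O(Re^{-t}\max(2^k,2^{k'})/2^{k'})$ and length $O(2^k)$; a Minkowski count in $g_1\Z^2$ (covolume one, shortest vector $\geq R^{-1}$) bounds the number of admissible $w_1$. Multiplying by $\mu(v)$, summing over $w_2$ in its shell, and then over the $O(t^2)$ dyadic pairs produces $\sum_v\mu(v)\ll R^{N_0}(1+t)$ for some absolute $N_0$, which is $\leq e^{\eta t}$ once $t\geq t_\eta$ with $t_\eta=c|\log\eta|$ (the polynomial-in-$R$ factor absorbed in the implicit constant).

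For part~(2) the restriction to $L$ simplifies matters: $L\cap g\Lambda_{\rm nz}$ is parametrized by $(m,n)\in\Z^2$ with $mn\neq0$ via $v=(mg_1v_1,ng_2v_2)$, so the angles $\phi_i:=\arg(g_iv_i)$ are fixed independent of $(m,n)$. The same polar analysis shows $\hat f_L(\theta)\neq 0$ only when $\theta$ lies in one of four short arcs near $\pi/2-\phi_1\pmod\pi$, and the existence of such arcs forces $\phi_1\equiv\phi_2\pmod\pi$ up to an error $\ll R/(\min r_i\cdot e^t)$, where $r_i=\norm{g_iv_i}$. Within such an arc, a direct product count of admissible $(m,n)$ gives
\[
\hat f_L(\theta)\asymp\frac{R^2}{r_1r_2e^{2t}|\cos(\phi_1+\theta)|^2}
\]
modulo lower-order lattice-boundary corrections. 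The condition $\hat f_L(\theta)<e^{\eta t}$ is thus equivalent to $|\cos(\phi_1+\theta)|>R/(\sqrt{r_1r_2}\,e^{(1+\eta/2)t})$, which cuts off the $\cos^{-2}$-singularity; direct integration then gives
\[
\int_{[0,2\pi]\setminus\mathcal C_L}\hat f_L(\theta)\diff\theta\ll\frac{R}{\sqrt{r_1r_2}\,e^{(1-\eta/2)t}}\ll e^{(-1+\eta)t},
\]
using $r_i\gg R^{-1}$. The principal technical point in either part is extracting the parallelism constraint from the contraction of $a_t$; once this is in hand, the remaining bookkeeping is elementary.
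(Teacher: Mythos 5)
Your treatment of part~(2) is essentially sound and close in spirit to the paper's: since every point of $g\Lambda_{\rm nz}\cap L$ is $(m g_1v_1,n g_2v_2)$, the bound $\hat f_L(\theta)\ll R^2\norm{a_t\rot_\theta g_1v_1}^{-1}\norm{a_t\rot_\theta g_2v_2}^{-1}$ plus the cutoff coming from $\hat f_L<e^{\eta t}$ gives the claim (the paper does the same via restriction to the arcs where both vectors have length $\geq R^{-1}e^{-\eta t}/10$ and Cauchy--Schwarz). Two small repairs: the ``equivalence'' $\hat f_L<e^{\eta t}\iff|\cos(\phi_1+\theta)|>c_0$ only holds up to constants, so you should treat the set $\{\theta\in[0,2\pi]\setminus\mathcal C_L:\ |\cos(\phi_1+\theta)|\leq c_0\}$ trivially, bounding its contribution by $e^{\eta t}$ times its measure $\ll c_0$; and the polar approximation $\norm{a_t\rot_\theta w}\approx\norm w e^t|\cos(\phi+\theta)|$ fails on the region $|\cos(\phi+\theta)|\lesssim e^{-2t}$, which again is handled by the trivial bound $e^{\eta t}\cdot e^{-2t}$.

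Part~(1), however, has a genuine gap in the lattice-point bookkeeping. For a fixed $w_2$ in the shell $\norm{w_2}\sim 2^{k'}$, the number of points of $g_1\Z^2$ in the slab of length $L\sim 2^k$ and width $W\sim Re^{-t}\max(2^k,2^{k'})/2^{k'}$ around the line $\R w_2$ is not $\ll LW+1$; the correct bound is $\ll LW+L/\lambda_1+1\ll LW+2^kR+1$, and the term $2^kR$ is genuinely attained when $w_2$ is nearly parallel to a short primitive vector $p$ of $g_1\Z^2$ (then all multiples $jp$ with $|j|\leq 2^k/\norm p$ lie in the slab). If you multiply this worst-case per-$w_2$ count by the number of \emph{all} $w_2$ in the shell and by $\mu(v)\ll R/(\max_i\norm{w_i}\,e^t)$, the block $2^k\approx 2^{k'}\approx Re^t$ alone already contributes $\approx(R^2e^{2t})\cdot(R^2e^t)\cdot e^{-2t}=R^4e^t$, so the described summation does not produce $R^{N_0}(1+t)$, nor even $e^{\eta t}$; and if instead you use only the area term $LW+1$, the count is simply false. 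The conclusion you claim is in fact true (and sharp up to constants, cf.\ the paper's remark comparing with \cite[Lemma 5.13]{EMM-Upp}), but establishing it requires controlling exactly these configurations --- $w_1$ a multiple of a short vector of $g_1\Z^2$ with $|w_1\wedge w_2|$ tiny, i.e.\ the near-$\sqf$-isotropic pairs that drive the whole special-subspace analysis --- either by showing that only few $w_2$ in each shell can be that close to a fixed short direction, or by the paper's route: bound $\hat f\ll_R\omega_1\omega_2$ by the product of the two cusp functions, split $[0,2\pi]$ into $\Theta_t(\delta)$ and its complement with $\delta=\eta/10$, handle the complement by the $L^p$, $p<2$, Margulis-type inequality (Lemmas~\ref{lem: L2 bound SL2} and~\ref{lem: alpha1 and alpha3}), and on $\Theta_t(\delta)$ use $\omega_1\omega_2\leq\omega_1^{2+2\delta}$ together with the sublemma bound $\int_0^{2\pi}\norm{a_t\rot_\theta w}^{-2-2\delta}\diff\!\theta\ll e^{4\delta t}\norm w^{-2-2\delta}$ and the uniqueness of the short vector, which yields the stated $e^{\eta t}$.
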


We postpone the proof of this lemma to the end of \S\ref{eq: upper bound}. 
Part~(1) in this Lemma should be compared with~\cite[Lemma 5.13]{EMM-Upp}; 
indeed in loc.\ cit.\ the integral appearing part~(1) in Lemma~\ref{lem: gen bd for int hat f} is bounded by $O(t)$ (vs.\ $e^{o(t)}$ that we give here) which is sharp. The above however suffices for our needs.

\subsection{A linear algebra lemma}\label{sec: lower bd}
The goal in the remaining parts of this section is to relate the circular integrals as appear in Theorem~\ref{thm: equi of f hat} to the counting problem in Theorem~\ref{thm: main}. This is the content of Lemma~\ref{lem: rotating to vertical} 
which should be compared with~\cite[Lemma 3.6]{EMM-Upp} and~\cite[Lemma 3.4]{EskinMasur-Upp}. 
We will also establish a certain upper bound estimate in Lemma~\ref{lem: gen upp bd} 
which will be used in the proof of Theorem~\ref{thm: main}.

Let us begin by fixing some notation which will be used in Lemma~\ref{lem: rotating to vertical 2} and Lemma~\ref{lem: rotating to vertical}. 
Let $\alpha<\beta$, $R\geq \max\{1,\absolute{\alpha},\absolute{\beta}\}$, $R^{-1}\leq q\leq R$, and $0<\vare<R^{-4}$.
Let $\varrho:\R \to [0,1]$ be a smooth function supported on $[q-\vare,q]$. 
Let $f_{1}$ be a smooth function on $\R^2$ satisfying  
\be\label{eq: def f1}
1_{[-\frac\vare2, \frac\vare2]}(\cox) \cdot \varrho(\coy)\leq f_1(\cox,\coy)\leq 1_{[\frac{-\vare-\vare^2}2, \frac{\vare+\vare^2}2]}(\cox) \cdot \varrho(\coy);
\ee
we chose $\varrho$ and $f_1$ so that their partial derivatives are $\ll_R \vare^{-10}$.

For an interval $I=[a,b]$ and $\delta>0$, put 
\begin{equation}\label{eq:modified intervals}
    \begin{aligned}
I_{\delta}&=[a-\delta,b+\delta] \supset I \\
I_{-\delta}&=[a+\delta,b-\delta] \subset I.
\end{aligned}
\end{equation}
Given two intervals $I\subset [-R^2,R^2]$ and $I'\subset[0,R]$, 
let $f_{I,I'}$ be a smooth function with partial derivatives $\ll_R\vare^{-10}$ satisfying
\be\label{eq: def f2}
1_{I^{(1)}}(\cox) \cdot 1_{{I'}^{(1)}}(|\coy|)\leq f_{I,I'}(\cox,\coy)\leq 1_{I^{(2)}}(\cox)\cdot  1_{{I'}^{(2)}}(|\coy|),
\ee
where we write $I^{(k)}=I_{10 kR^3\vare}$ (in the formula above we used $k=1,2$, but later also larges values of $k$ will be used).

\medskip

For any function $h$ on $\R^2$, define 
\[
J_{h}(\coy)=\int_\R h(\cox,\coy)\diff\!\cox.
\]
Note that if $f_1$ is as in~\eqref{eq: def f1}, then 
\be\label{eq: Jf1}
J_{f_1}(\coy)=\varrho(\coy)(\vare+O(\vare^2))
\ee

Let $f_1$ be as above (for this $q$ and some $\varrho$) and let 
$f_2=f_{I_0,I_1}$ (for $I_0=[-q^{-1}\beta,-q^{-1}\alpha]$ and some $I_1 \subset [0,R]$).  
Define $f$ on $\R^4$ by 
\[
f(v_1,v_2)=f_{1}(v_1)f_{2}(v_2).
\]  
We will work with a slight variant of polar coordinates in $\R^2$:
$0\neq w\in\R^2$ is denoted by $(\theta_w, \norm{w})$
where $\theta_w\in[0,2\pi]$ is so that $r_{\theta_w}w=(0,\norm{w})$.

\begin{lemma}
\label{lem: rotating to vertical}
Let the notation be as above. 
Let $t>\log(4R^3\vare^{-2})$, and let $\xi$ be a $2\pi$-periodic non-negative smooth function.
Let $v=(v_1, v_2)\in\mathbb R^4$ with $\norm{v_i}\geq R^{-1}$. Then  
\begin{multline}\label{eq:integral algebraic lemma'} 
qe^{2t}\int_0^{2\pi} f(\darot v)\xi(\theta)\diff\!\theta \leq \\
\begin{cases} (1+O(\vare))J_{f_1}\Bigl(e^{-t}\norm{v_1}\Bigr)\xi(\theta_1) + O\Bigl(\Lip(f_1)\Lip(\xi)e^{-2t}\Bigr) & \text{if~\eqref{eq: mathcal E} holds}
\\ 0&\text{otherwise}
\end{cases}
\end{multline}
where 
\be\label{eq: mathcal E}
\Bigl(-q^{-1}\sqf(v), e^{-t}\norm{v_2}\Bigr)\in I_0^{(3)}\times I_1^{(3)}\quad \text{and}\quad \norm{v_1}\leq 2Re^t.
\ee
If we moreover assume that $e^{-t}\norm{v_2}\in I_1$ and $\sqf(v)\in[\alpha,\beta]$, then 
\begin{multline}\label{eq:integral algebraic lemma}
qe^{2t}\int_0^{2\pi} f(\darot v)\xi(\theta)\diff\!\theta=\\
(1+O(\vare))J_{f_1}\Bigl(e^{-t}\norm{v_1}\Bigr)\xi(\theta_1) f_2\Bigl(-q^{-1}\sqf(v), e^{-t}\norm{v_2}\Bigr)\\ + O\Bigl(\Lip(f_1)\Lip(\xi)e^{-2t}\Bigr).
\end{multline} 
The implied constants depend polynomially on $R$. 

Analogous statements hold with the roles of $v_1$ and $v_2$ switched.  
\end{lemma}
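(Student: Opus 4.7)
The plan is to localize the circular integral to a tiny arc near the stationary angle $\theta = \theta_{v_1}$, perform a linear change of variables on that arc, and separately handle the factor $f_2(a_tr_\theta v_2)$ using the $\SL_2$-invariance of $\sqf$. Parametrizing $v_i = \|v_i\|(\sin\theta_{v_i}, \cos\theta_{v_i})$, we have
\[
a_t r_\theta v_i = \bigl(-e^t\|v_i\|\sin(\theta-\theta_{v_i}),\; e^{-t}\|v_i\|\cos(\theta-\theta_{v_i})\bigr).
\]
The conditions $|(a_tr_\theta v_1)_\cox| \leq \vare$ and $(a_tr_\theta v_1)_\coy \in [q-\vare,q] \subset (0,R]$ confine $\phi := \theta - \theta_{v_1}$ to an arc of length $\lesssim \vare/(e^t\|v_1\|)$ (ruling out the antipodal arc near $\theta_{v_1}+\pi$, where $\cos\phi < 0$ puts $(a_tr_\theta v_1)_\coy$ outside $\supp(\varrho)$), and force $\|v_1\| = qe^t(1 + O(\vare/q))$; in particular $\|v_1\| \leq 2Re^t$ is automatic, so the integral vanishes when this bound fails.

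Substituting $u = -e^t\|v_1\|\sin\phi$ on the relevant arc, with Jacobian $(e^t\|v_1\|\cos\phi)^{-1} = (e^t\|v_1\|)^{-1}(1+O(\phi^2))$, and replacing the $\coy$-argument $e^{-t}\|v_1\|\cos\phi$ of $f_1$ by $e^{-t}\|v_1\|$ (costing only $\Lip(f_1)\cdot O(\phi^2 e^{-t}\|v_1\|)$ per unit $u$), one obtains
\[
qe^{2t}\int f_1(a_tr_\theta v_1)\,\diff\!\phi = J_{f_1}(e^{-t}\|v_1\|)(1+O(\vare)) + \text{negligible},
\]
using $qe^{2t}/(e^t\|v_1\|) = 1 + O(\vare/q)$. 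Coupled with the Lipschitz bound $|\xi(\theta) - \xi(\theta_{v_1})| \leq \Lip(\xi)|\phi|$, this pulls $\xi(\theta_{v_1})$ outside the integral at the price of an additive error absorbed into $\Lip(f_1)\Lip(\xi)e^{-2t}$ after rescaling.

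The heart of the argument is uniform control of $f_2(a_tr_\theta v_2)$ over the $\phi$-support. The $\SL_2$-invariance of $\sqf$ gives $\sin(\theta_{v_1} - \theta_{v_2}) = \sqf(v)/(\|v_1\|\|v_2\|)$, so the value at $\phi = 0$ satisfies
\[
(a_tr_{\theta_{v_1}} v_2)_\cox = -e^t\sqf(v)/\|v_1\| = -q^{-1}\sqf(v) + O(R^3\vare),
\]
where $|\sqf(v)| \lesssim R$ whenever $-q^{-1}\sqf(v) \in I_0^{(3)}$, and $|(a_tr_{\theta_{v_1}} v_2)_\coy| = e^{-t}\|v_2\|(1+O(R^6e^{-2t}))$. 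Differentiating, the $\cox$-coordinate varies over the $\phi$-support by at most $O(e^t\|v_2\|)\cdot O(\vare/(e^t\|v_1\|)) = O(\vare R^2)$, while the $\coy$-coordinate varies negligibly; the hypothesis $t > \log(4R^3\vare^{-2})$ makes all these perturbations strictly less than the $10R^3\vare$ margin separating consecutive $I^{(k)}$.

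This yields both conclusions. If $\mathcal{E}$ fails then either $\|v_1\| > 2Re^t$ (and the integrand vanishes) or the target $(-q^{-1}\sqf(v), e^{-t}\|v_2\|)$ lies outside $I_0^{(3)}\times I_1^{(3)}$; in the latter case the preceding paragraph shows $a_tr_\theta v_2$ stays outside $I_0^{(2)} \times I_1^{(2)}$ on the whole $\phi$-support, so $f_2(a_tr_\theta v_2) \equiv 0$. If $\mathcal{E}$ holds, bounding $f_2 \leq 1$ immediately gives~\eqref{eq:integral algebraic lemma'}. Under the stronger assumptions $e^{-t}\|v_2\| \in I_1$ and $\sqf(v) \in [\alpha,\beta]$, the target lies inside $I_0 \times I_1$, so $a_tr_\theta v_2$ remains inside $I_0^{(1)}\times I_1^{(1)}$ throughout the $\phi$-support, where $f_2 \equiv 1 = f_2(-q^{-1}\sqf(v), e^{-t}\|v_2\|)$, upgrading the inequality to the equality~\eqref{eq:integral algebraic lemma}. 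The main obstacle is the margin bookkeeping, namely ensuring the absolute constants hidden in $O(R^3\vare)$ are genuinely at most $10R^3\vare$; the remaining estimates are routine Taylor expansions controlled by $t > \log(4R^3\vare^{-2})$. The $v_1 \leftrightarrow v_2$ version follows by the same argument after interchanging the roles of the two factors.
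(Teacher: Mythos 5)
Your proof is correct and follows essentially the same route as the paper: localize to the arc $|\theta-\theta_{v_1}|\ll\vare e^{-2t}$ forced by $f_1$, change variables to produce $J_{f_1}(e^{-t}\norm{v_1})$, use the Lipschitz bound on $\xi$, and control $f_2(a_tr_\theta v_2)$ via the $\SO(\sqf)$-invariance (your identity $\sin(\theta_{v_1}-\theta_{v_2})=\sqf(v)/(\norm{v_1}\norm{v_2})$ is exactly the paper's computation with $\Delta(r_{\theta_1})\in\SO(\sqf)$ and $\bar\cox_2$). The only organizational difference is that the paper isolates the support/margin analysis (your conditions on $\norm{v_1}$, the $\phi$-range, and membership in $I_0^{(3)}\times I_1^{(3)}$) in a separate preparatory lemma (Lemma~\ref{lem: rotating to vertical 2}), which is immaterial.
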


The proof is based on a direct computation which we will carry out in the next lemma.

\begin{lemma}
\label{lem: rotating to vertical 2}
Let the notation be as in Lemma~\ref{lem: rotating to vertical}.
Let $t>\log(4R^3\vare^{-2})$. If
\[
f(\darot v)\neq 0
\]
for some $v=(v_1, v_2)\in\R^4$ with $\norm{v_i}\geq R^{-1}$ and some $\theta\in[0,2\pi]$, then all of the following properties hold 
\begin{enumerate}
\item $q(1-2\vare)\leq e^{-t}\norm{v_1}\leq q(1+\vare)$.
\item $\absolute{\theta-\theta_{v_1}}\leq 2R\vare e^{-2t}$,  
\item $e^{-t}\norm{v_2}\in I_{1}^{(2)}$, and 
\item $-q^{-1}\sqf(v)\in I_0^{(3)}$.
\end{enumerate} 
\end{lemma}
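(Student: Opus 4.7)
The plan is a direct computation in polar coordinates. Writing $v_i = (\norm{v_i}\sin\theta_{v_i}, \norm{v_i}\cos\theta_{v_i})$, one checks
\[
a_t r_\theta v_i = \bigl(-e^t \norm{v_i}\sin(\theta - \theta_{v_i}),\, e^{-t}\norm{v_i}\cos(\theta - \theta_{v_i})\bigr).
\]
Since $\darot v = (a_t r_\theta v_1, a_t r_\theta v_2)$ and $f$ is the tensor $f_1\otimes f_2$, non-vanishing of $f(\darot v)$ forces both $f_1(a_t r_\theta v_1) \neq 0$ and $f_2(a_t r_\theta v_2) \neq 0$. All four conclusions of the lemma will follow by extracting the two coordinate constraints from each of these.

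First I would exploit the support of $f_1$. Its $\coy$-coordinate constraint gives $e^{-t}\norm{v_1}\cos(\theta - \theta_{v_1}) \in [q-\vare,q]$, and its $\cox$-coordinate bound gives $|e^t\norm{v_1}\sin(\theta - \theta_{v_1})| \leq \vare$. Together with $\norm{v_1} \geq R^{-1}$, the latter forces $|\sin(\theta - \theta_{v_1})| \leq R\vare e^{-t}$, which by the hypothesis $t > \log(4R^3\vare^{-2})$ is so small that $\cos(\theta - \theta_{v_1}) \in [1 - O(\vare^6/R^4), 1]$; dividing the $\coy$-coordinate constraint by this cosine yields~(1). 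Reinserting~(1) into the $\cox$-coordinate bound then sharpens the sine estimate to $|\sin(\theta - \theta_{v_1})| \leq \vare/(e^t\norm{v_1}) \leq 2R\vare e^{-2t}$, from which~(2) follows via the Taylor expansion of $\arcsin$ at the origin.

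Next I turn to $f_2$. Its $\cox$-coordinate constraint gives $|e^t\norm{v_2}\sin(\theta - \theta_{v_2})| \leq R^2 + O(R^3\vare)$, and combined with $\norm{v_2} \geq R^{-1}$ this forces $|\sin(\theta - \theta_{v_2})| \leq 2R^3 e^{-t}$. The choice of $t$ then gives $|\cos(\theta - \theta_{v_2})| = 1 + O(\vare^4)$. The $\coy$-coordinate constraint $|e^{-t}\norm{v_2}\cos(\theta - \theta_{v_2})| \in I_1^{(2)}$ transfers to $e^{-t}\norm{v_2} \in I_1^{(2)}$ with additive error $e^{-t}\norm{v_2}\bigl(1 - |\cos(\theta - \theta_{v_2})|\bigr) = O(R\vare^4)$, which is of strictly lower order than the $10R^3\vare$ gap between consecutive fattenings $I^{(k)}$; this gives~(3).

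For~(4) I would use the polar identity $\sqf(v) = \norm{v_1}\norm{v_2}\sin(\theta_{v_1} - \theta_{v_2})$. Decomposing $\theta - \theta_{v_2} = (\theta - \theta_{v_1}) + (\theta_{v_1} - \theta_{v_2})$ and applying~(2) to discard the first summand up to an $O(R\vare e^{-2t})$ error yields
\[
-e^t\norm{v_2}\sin(\theta - \theta_{v_2}) = -\frac{\sqf(v)}{e^{-t}\norm{v_1}} + O(R^2\vare).
\]
Substituting $e^{-t}\norm{v_1} = q(1 + O(\vare))$ from~(1) and using that the left hand side lies in $I_0^{(2)} \subseteq [-2R^2, 2R^2]$, the total deviation from $-q^{-1}\sqf(v)$ is $O(R^2\vare)$, again well within the step from $I_0^{(2)}$ to $I_0^{(3)}$, giving~(4). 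The main obstacle throughout is simply bookkeeping: each error term must be confirmed to be of lower order than the $R^3\vare$ fattening scale, which is guaranteed by the lower bound on $t$ but requires careful propagation of errors from~(1) and~(2) into the $f_2$ analysis.
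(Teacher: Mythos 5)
Your proof is correct and follows essentially the same route as the paper: extract the coordinate constraints from the supports of $f_1$ and $f_2$, deduce (1)--(2) from the $f_1$ constraint, and then combine the angle estimate with the identity $\sqf(v)=\norm{v_1}\norm{v_2}\sin(\theta_{v_1}-\theta_{v_2})$ (the paper's version of this step is rotating by $r_{\theta_{v_1}}$ and using $\Delta(r_{\theta_{v_1}})\in\SO(\sqf)$) to get (3)--(4). The small slack you leave in the fattening constants for (3) and (4) is of the same order as the slack in the paper's own computation and is harmless for the downstream applications.
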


\begin{proof}
The definitions of $f_1$ and $f_2$ imply that 
\[
\text{ if $\quad\norm{v_i}> (R+20R^3\vare)e^t,\quad$ then $\quad f(\darot v)=0$}
\]
and there is nothing to prove. We thus assume that $\norm{v_i}\leq (R+20R^3\vare)e^t$ for the rest of the argument. 

For convenience, we will write $\theta_1=\theta_{v_1}$. Since $\theta\in[0,2\pi]$ satisfies 
\[
a_tr_\theta v_1\in [\tfrac{-\vare-\vare^2}2, \tfrac{\vare+\vare^2}2]\times [q-\vare,q]
\] 
only if 
\be \label{eq: range of theta 2}
\absolute{\theta-\theta_1}\leq \tfrac32\vare e^{-t}\norm{v_1}^{-1}\leq 2R\vare e^{-2t},
\ee
we see that when 
\be\label{eq: norm of v1 and et 2}
q(1-2\vare)\leq e^{-t}\norm{v_1}\leq q(1+\vare)
\ee 
fails, $f(\darot v)=0$.

Thus, assume that~\eqref{eq: range of theta 2} and~\eqref{eq: norm of v1 and et 2} hold for the rest of the argument, which is to say the conditions (1) and (2) in the lemma are satisfied if $f(\darot v)\neq0$. We now show (3) and (4) must also hold.    

Let us write 
\[
\rot_{\theta_1}v_2=(\bar\cox_2,\bar\coy_2).
\]
Recall that $\norm{v_i}\leq (R+20R^3\vare)e^t$ and that $\theta$ is in the range \eqref{eq: range of theta 2}, and write 
\[
r_\theta v_1=(\cox_1', \coy_1')\quad\text{and}\quad r_\theta v_2=(\cox_2', \coy_2').
\]
Then $\absolute{\cox'_1}\leq 4R \vare e^{-t}$, $\absolute{\coy_1'-\norm{v_1}}\leq 4R \vare e^{-t}$, 
\be\label{eq: x2 bar and x2' 2}
\absolute{\cox'_2-\bar\cox_2},\absolute{\coy_2'-\bar\coy_2}\leq 3R\vare e^{-t}\norm{v_1}^{-1}\norm{v_2}\leq 4R^3\vare e^{-t};
\ee 
in the last inequality we used $\vare<R^{-4}$, $\norm{v_2}\leq (R+20R^3\vare)e^t$, and~\eqref{eq: norm of v1 and et 2}.  

Thus, we conclude that 
\[
a_tr_\theta v_2=(e^t\cox'_2, e^{-t}\coy'_2)=(e^t\bar\cox_2+\cox_{2,\theta}, e^{-t}\bar\coy_2+\coy_{2,\theta})
\]
where $|\cox_{2,\theta}|\leq 4R^3\vare$ and $|\coy_{2,\theta}|\leq 4R^3\vare e^{-2t}$. 

In view of the definition of $f_2$, we conclude that $f_2(a_tr_\theta v_2)=0$, unless 
\[
e^t\cox_2'\in (I_0^{(1)})_{20R^3\vare}\quad\text{and}\quad  e^{-t}\coy_2'\in (I_1^{(1)}){20R^3\vare +\vare}
\]
These and the bound on $\cox_{2,\theta}$ imply that 
\be\label{eq: norm of cox2 2}
e^t\bar\cox_2\in (I_0^{(1)})_{24R^3\vare}
\ee
and hence using the upper bound on $|\bar\cox_2|$ implied by~\eqref{eq: norm of cox2 2}, we get
\be\label{eq: norm of coy2 2}
\absolute{\absolute{\bar\coy_2}-\norm{v_2}}\leq\tfrac{R^4e^{-2t}}{\norm{v_2}}.
\ee
Since $e^{-t}\coy_2'\in (I_1^{(1)})_{20R^3\vare +\vare}$ and $|\coy_{2,\theta}|\leq 4R^3\vare e^{-2t}$, 
we conclude from~\eqref{eq: norm of coy2 2} that if $f_2(a_tr_\theta v_2)\neq 0$, then 
\[
e^{-t}\norm{v_2}\in (I_1^{(1)})_{21R^3\vare}
\]
which establishes (3) in the lemma.

Finally, combining~\eqref{eq: norm of cox2 2} and~\eqref{eq: norm of v1 and et 2}, we conclude that 
\[
q^{-1}\norm{v_1} \bar\cox_2\in (I_0^{(1)})_{30R^3\vare}.
\]
Since $\drot\in\SO(Q_0)$ for all $\theta$ and $\Delta(r_{\theta_1})v= (0, \norm{v_1}, \bar\cox_2,\bar\coy_2)$, we get   
\[
-q^{-1}\sqf(v)=-q^{-1}\sqf(\Delta(r_{\theta_1}v))=q^{-1}\norm{v_1}\bar\cox_2\in I_0^{(2)}.
\]
as it was claimed in~(4). 
\end{proof}

We now turn to the proof of Lemma~\ref{lem: rotating to vertical}

\begin{proof}[Proof of Lemma~\ref{lem: rotating to vertical}]
For convenience we write $\theta_1=\theta_{v_1}$. By Lemma~\ref{lem: rotating to vertical 2} if $f(\darot v)\neq0$, then all the following hold true: 
\begin{subequations}
\begin{align}
\label{eq: norm of v1 and et} &q(1-2\vare)\leq e^{-t}\norm{v_1}\leq q(1+\vare)\\
\label{eq: range of theta}&\absolute{\theta-\theta_1}\leq 2R\vare e^{-2t}\\
\label{eq: norm of v2} &e^{-t}\norm{v_2}\in I_{1}^{(3)}\\
\label{eq: value of sqf(v)}& -q^{-1}\sqf(v)\in I_0^{(3)}
\end{align} 
\end{subequations}

We begin with the following computation which will be used in the proof of both~\eqref{eq:integral algebraic lemma'} and~\eqref{eq:integral algebraic lemma}.
\[
\int_0^{2\pi} f_1(a_tr_\theta v_1)\diff\!\theta=\int_0^{2\pi}  f_1(-e^t\norm{v_1}\sin\theta, e^{-t}\norm{v_1}\cos\theta)\diff\!\theta.
\]
Making the change of variable $\coz=-e^t\norm{v_1}\sin\theta$, the above integral equals
\begin{multline}\label{eq: def J used}
 \frac{e^{-t}}{\norm{v_1}}\int_{-\infty}^\infty f_1\biggl(\coz, e^{-t}\norm{v_1}\sqrt{1-(e^{-t}\coz/\norm{v_1})^2}\biggr)\frac{1}{\sqrt{1-(e^{-t}\coz/\norm{v_1})^2}}\diff\!\coz\\
 =\frac{e^{-t}}{\norm{v_1}}\int_{-\infty}^\infty f_1(\coz,e^{-t}\norm{v_1})\diff\!\coz+O(R^2\Lip(f_1)e^{-4t})\\
 =q^{-1}(1+O(\vare))e^{-2t}J_{f_1}(e^{-t}\norm{v_1})+ O(R^2\Lip(f_1)e^{-4t})
\end{multline}
where in the last equality we used~\eqref{eq: norm of v1 and et} and~\eqref{eq: Jf1}.

Let us now begin the proof of \eqref{eq:integral algebraic lemma'}. 
We can restrict the integration in \eqref{eq:integral algebraic lemma'}
to $\theta$ satisfying \eqref{eq: range of theta}. In this range   
\be\label{eq: xi and theta}
\absolute{\xi(\theta)-\xi(\theta_1)}\leq 2R\vare e^{-2t} \Lip(\xi).
\ee

Since $0\leq f_1, f_2\leq 1$ and $\xi$ is non-negative, we have 
\be\label{eq:integral algebraic lemma' 1}
\int_0^{2\pi} f(\darot v)\xi(\theta)\diff\!\theta\leq \int_0^{2\pi} f_1(a_tr_\theta)\xi(\theta)\diff\!\theta
\ee
Moreover, in view of~\eqref{eq: xi and theta}, we have 
\[
f_1(a_tr_\theta)\xi(\theta)=f_1(a_tr_\theta v_1)\xi(\theta_1)+O(R^2 \Lip(\xi)\vare e^{-2t})
\]
This,~\eqref{eq:integral algebraic lemma' 1} and the fact that the range of integration is \eqref{eq: range of theta} implies    
\[
e^{2t}\int_0^{2\pi} f(\darot v)\xi(\theta)\diff\!\theta\leq \xi(\theta_1)e^{2t}\int_0^{2\pi} f_1(a_tr_\theta v_1)+ O(R^2 \Lip(\xi)\vare e^{-2t}).
\]
This and~\eqref{eq: def J used} imply that 
\begin{multline}\label{eq:integral algebraic lemma''}
e^{2t}\int_0^{2\pi} f(\darot v)\xi(\theta)\diff\!\theta\leq\\ 
q^{-1}(1+O(\vare))J_{f_1}(e^{-t}\norm{v_1})\xi(\theta_1)+O(R^2 \Lip(f_1)\Lip(\xi)\vare e^{-2t})
\end{multline}
Thus~\eqref{eq:integral algebraic lemma'} follows from~\eqref{eq:integral algebraic lemma''} in view of~\eqref{eq: norm of v2} and \eqref{eq: value of sqf(v)}. 

Note that claim regarding $\mathcal E$ follows as well, indeed if either~\eqref{eq: norm of v1 and et},~\eqref{eq: norm of v2} or \eqref{eq: value of sqf(v)} fails, both the left and right side of~\eqref{eq:integral algebraic lemma'} equal zero.

\medskip

The proof of~\eqref{eq:integral algebraic lemma} is similar. Indeed one argues as in the proof of Lemma~\ref{lem: rotating to vertical 2} to show that if $e^{-t}\norm{v_2}\in I_1$ and $\sqf(v)\in[\alpha,\beta]$, then for all $\theta$ in the 
range~\eqref{eq: range of theta}, one has    
\[
f_2(a_tr_\theta v_2)=1.
\]
One then repeats the above argument and obtains~\eqref{eq:integral algebraic lemma}.   
\end{proof}


\subsection{A smooth cell decomposition}\label{sec: partition of unity}
Let 
\begin{align*}
\Omega&=\left\{(w_1+w_2, \omega(w_1-w_2)): \norm{w_k}\leq 1\right\},\\
\ball&=\left\{(v_1, v_2): \norm{v_k}\leq 1\right\}.
\end{align*}

As before, write $v=(v_1, v_2)\in\R^4$ where $v_k\in\R^2$.  
Let $\pi_1(v)=(v_1,0)$ and $\pi_2(v)=(0,v_2)$; abusing the notation, we also 
consider $\pi_k(\Omega)\subset\R^2$.  

Write $\Omega\setminus \ball=\Omega_1\cup\Omega_2$ where    
\begin{align*}
    \Omega_1:&=\Bigl\{(v_1, v_2)\in\Omega: \norm{v_1}> 1\Bigl\}\quad\text{and}\\
    \Omega_2:&=\Bigl\{(v_1, v_2)\in\Omega: \norm{v_1}\leq 1, \norm{v_2}>1\Bigl\}.
\end{align*}

A direct computation shows that $(v_1,v_2)\in\Omega$ if and only if
\[ \norm{v_2}^2\leq 4-\norm{v_1}^2-2\absolute{\sqf(v_1,v_2)}.\] 
It follows that 
for every $v_1\in \pi_1(\Omega_1)$, we have 
\be\label{eq: define I v1}
\{\norm{\lambda v_1}: (v_1, \lambda v_1)\in \Omega_1\}=\Bigl[0,\sqrt{4-\|v_1\|^2}\Bigr],
\ee
and for $v_2\in \pi_2(\Omega_2)$, we have 
\[
\{\norm{\lambda v_2}: (\lambda v_2, v_2)\in \Omega_2\}=\Bigl[0,\min\Bigl(1,\sqrt{4-\|v_2\|^2}\Bigr)\Bigr].
\]

\medskip

Fix some $R\geq 10^3$ and let $0<\vare< R^{-20}$. Let $E\in\N$ be so that $\frac1E\leq 100R^{10}\vare\leq \frac1{E-1}$, and put 
\[
I_i=[\tfrac{i-1}{E}, \tfrac{i}{E}]\quad\text{for all $1\leq i\leq E$.}
\]
Fix two families of smooth functions $\{\xi_i^-\}$ and $\{\xi_i^+\}$ with $C^1$ norm $\ll \vare^{-10}$ satisfying the following: 
\begin{enumerate}[label={($\xi$-\arabic*)}]
\item\label{xi-1} For all $i$, $0\leq\xi_i^-\leq \xi^+_i\leq 1$, 
\[
\begin{aligned}
&\xi_i^+ \equiv 1 \text{ on $2\pi I_i$,} && \supp(\xi_i^+)\subset 2\pi (I_i)_{\vare^2},\\
&\xi_i^- \equiv 1 \text{ on $2\pi(I_i)_{-4\vare^2}$,} && \supp(\xi_i^-)\subset 2\pi (I_i)_{-2\vare^2}
\end{aligned}
\] 
\end{enumerate} 
\noindent
(here we use the notation \eqref{eq:modified intervals}). We extend $\xi_i^\pm$ to $2\pi$-periodic functions on~$\R$.

\medskip

Similarly, let $E'\in\N$ be so that $\frac1{E'}\leq 100R^{9}\vare\leq \frac1{E'-1}$, and let 
\[
I_j'=[\tfrac{j-1}{E'}, \tfrac{j}{E'}]\quad\text{for all $1\leq j\leq E'$.}
\]
Fix two families of functions $\{\varrho_j^+\}$ and $\{\varrho_j^-\}$ with $C^1$ norm $\ll \vare^{-10}$ so that 
\begin{enumerate}[label={($\varrho$-\arabic*)}]
\item\label{rho-1} For all $i$, $0\leq\varrho_j^-\leq \varrho^+_j\leq 1$, 
\[
\begin{aligned}
&\varrho_j^+ \equiv 1 \text{ on $R I_j$,} && \supp(\varrho_j^+)\subset R (I_i)_{\vare^2},\\
&\varrho_j^- \equiv 1 \text{ on $R(I_j)_{-4\vare^2}$,} && \supp(\varrho_j^-)\subset R (I_i)_{-2\vare^2}.
\end{aligned}
\] 
\end{enumerate} 
Extend $\varrho_j^\pm$ to $\R$ by defining them to equal $0$ outside their supports.

Define \[
\varphi_{i,j}^+(\theta,r)=\xi_i^+(\theta)\varrho_j^+(r)\quad\text{and}\quad \varphi_{i,j}^-(\theta,r)=\xi_i^-(\theta)\varrho_j^-(r).
\] 
We will consider $\varphi_{i,j}^\pm$ as functions on $\R^2$ using our slightly non-standard polar coordinate system where  
any $0\neq w\in\R^2$ corresponds to $(\theta_w, \norm{w})$ if $r_{\theta_w}w=(0,\norm{w})$. Let 
\be\label{eq: def I Omega'}
\begin{aligned}
\mathcal I^+_1&=\{(i,j): \supp(\varphi_{i,j}^+)\cap \pi_1(\Omega_1)\neq \emptyset\}\\
\mathcal I^-_1&=\{(i,j): \supp(\varphi_{i,j}^-)\subset \pi_1(\Omega_1)\}.
\end{aligned}
\ee
We define $\mathcal I^\pm_2$ similarly with $\Omega_2$ and $\pi_2$ in lieu of $\Omega_1$ and $\pi_1$.
Note that for $k=1,2$ and $\boldsymbol\sigma=\pm$
\[
\biggl| {\rm area}(\pi_k(\Omega_k))-\sum_{(i,j)\in \mathcal I^{\boldsymbol\sigma}_k}\int\varphi_{i,j}^{\boldsymbol\sigma}\biggr|\ll \vare.
\]

We will work with $k=1$ for the remainder of this section, similar analysis applies to $k=2$ with the role of $v_1$ and $v_2$ switched. For all $(i,j)\in\mathcal I_1^+$, let 
\[
\Omega_{i,j}^+=\{(v_1,v_2+w): (v_1, v_2)\in\Omega_1,\varphi_{i,j}^+(v_1)=1, \norm{w}\leq 3R\vare\}.
\]

We will also define $\Omega_{i,j}\subset \Omega_{i,j}^+$ as follows. In view of~\eqref{eq: define I v1}, we will call the pair $(i,j)$ {\em typical} if 
\[
\inf\Bigl\{\sqrt{4-\|v_1\|^2}: v_1\in \supp(\varphi_{i,j}^+)\cap\pi_1(\Omega_1)\Bigr\}\geq \sqrt\vare.
\]
Let $\mathring{\mathcal I}_{1}^{-}$ denote the set of $(i,j)\in \mathcal I^-_1$ where 
$(i,j)$ is typical and for every 
\[
(v_1,\lambda v_1)\in\Omega_1\cap \Bigl(\supp(\varphi_{i,j}^-)\times \R^2\Bigr)\;\;\text{with}\;\; \norm{\lambda v_1} \in \left([0,\sqrt{4-\|v_1\|^2}]\right)_{-20 R\vare} 
\]
we have $(v_1, \lambda v_1+w)\in \Omega_1$ for all $w\in \R^2$ with $\norm{w}\leq 10R\vare$.

For any $(i,j)\in\mathring{\mathcal I}_{1}^{-}$, set 
\be\label{eq: def Omega ij 2}
\Omega_{i,j}:=\left\{(v_1,v_2+w): \begin{array}{c}(v_1, v_2)\in \Omega_1\cap \Bigl(\supp(\varphi_{i,j}^-)\times \R^2\Bigr),\\ 
w\in\R^2, \norm{w}\leq \vare\end{array}\right\}\cap \Omega_1.
\ee
Since $\supp(\varphi_{i,j}^-)\subset\{w: \varphi_{i,j}^+(w)=1\}$, we have $\Omega_{i,j}\subset \Omega_{i,j}^+$. Moreover, since $\{\supp(\varphi_{i,j}^-)\}$ is a disjoint collection, $\{\Omega_{i,j}\}$ is a disjoint collection. 

\medskip

In view of~\ref{xi-1}, \ref{rho-1}, and the above definitions,  
\begin{subequations}
\begin{align}
\label{eq: Omega ij 1} 1_{\Omega_1}\leq &\sum_{\mathcal I_1^+} 1_{\Omega_{i,j}^+}\leq 4\cdot 1_{\{(v_!, v_2): \norm{v_k}\leq 3\}} \\
\label{eq: Omega ij 2} &\sum_{\mathring{\mathcal I}_{1}^{-}} 1_{\Omega_{i,j}}\leq 1_{\Omega_1}
\end{align}
\end{subequations}

\subsection*{The intervals $I_{i,j}^+$ and $I_{i,j}^-$}
In our application of Lemma~\ref{lem: rotating to vertical}, $\xi_i^\pm$ will play the role of $\xi$; we will also work with $f=f_1f_2$ 
where $f_1$ is defined using $\varrho_j^\pm$ above and $f_2$ is defined using $I_0=[-q^{-1}\beta, -q^{-1}\alpha]$ (for some $R^{-1}\leq q\leq R$) and intervals $I_{i,j}^\pm$ which we now define. Put
\be\label{eq: def I ij pm}
\begin{aligned}
I_{i,j,+}'&=[0,b_{i,j}^+], \;\;\text{$b_{i,j}^+=\sup\Bigl\{\sqrt{4-\|v_1\|^2}\!:\! v_1\in \supp(\varphi_{i,j}^+)\cap\pi_1(\Omega_1)\Bigr\}$}\\
I_{i,j,-}'&=[0,b_{i,j}^-], \;\;\text{ $b_{i,j}^-=\inf\Bigl\{\sqrt{4-\|v_1\|^2}\!:\! v_1\in \supp(\varphi_{i,j}^+)\cap\pi_1(\Omega_1)\Bigr\}$}.
\end{aligned}
\ee

If $(i,j)$ is typical, i.e., if $b_{i,j}^-\geq\sqrt\vare$, put  
\be\label{eq: def I ij -}
I_{i,j}^+= \Bigl(I'_{i,j,+}\Bigr)_{10\vare} \quad\text{and} \quad I_{i,j}^-= \Bigl(I'_{i,j,-}\Bigr)_{-200R^{10}\vare}. 
\ee 
Since $\supp(\varphi_{i,j}^\pm)$ has diameter $\leq 200R^{10}\vare$ and $\vare<R^{-20}$, if $(i,j)$ is not typical, then $b_{i,j}^+\leq 2\sqrt\vare$. In this case, put $I_{i,j}^\pm=[0,3\sqrt\vare]$.

\medskip

We have the following lemma.

\begin{lemma}\label{lem: partition of unity}
Assume $R\geq \max\{10^3,\absolute{\alpha}, \absolute{\beta}\}$ and let $R^{-1}\leq q\leq R$.
Let $t \geq \log (R^2\vare^{-1})$, where as before $0<\vare<R^{-20}$.  

\begin{enumerate}
\item Let $I_0=[-q^{-1}\beta,-q^{-1}\alpha]$. Let $(i,j)\in\mathring{\mathcal I}_{1}^{-}$ and let  $f_1$ satisfy~\eqref{eq: def f1} with $\varrho_j^{-}$ (and with 
$\vare'=200R^{10}\vare$ instead of $\vare$). If 
\[
J_{f_{1}}(e^{-t}\norm{v_1})\xi_i^-(\theta_{v_1})1_{I_0^{(3)}}\Bigl(-q^{-1}\sqf(v)\Bigr) 1_{(I_{i,j}^-)^{(3)}}\Bigl(e^{-t}\norm{v_2}\Bigr)\neq0.
\]
for some $v=(v_1, v_2)\in\R^4$, then all the following hold 
\begin{enumerate}
\item $\sqf(v)\in ([\alpha,\beta])_{30R^4\vare}$, and
\item $e^{-t}v_1\in\supp(\varphi_{i,j}^-)$, and  
\item $e^{-t}v\in \Omega_{i,j}$. 
\end{enumerate}
\item Let $(i,j)\in\mathcal I_1^+$. If $v=(v_1,v_2)\in e^t\Omega_{i,j}^+$ satisfies $\sqf(v)\in [\alpha,\beta]$, then 
\[
e^{-t}\norm{v_2}\in I_{i,j}^+
\]
\end{enumerate}
\end{lemma}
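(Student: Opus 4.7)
The plan is to unpack each of the support conditions dictated by the nonvanishing hypothesis in part~(1) and by membership in $e^t\Omega_{i,j}^+$ in part~(2), then compare them against the characterization
\[
(v_1,v_2)\in\Omega\iff \norm{v_1}^2+\norm{v_2}^2+2\absolute{\sqf(v_1,v_2)}\leq 4,
\]
which one derives by inverting $v_1=w_1+w_2$, $v_2=\omega(w_1-w_2)$ and observing $v_1\cdot\omega^{-1}v_2=\sqf(v_1,v_2)$.

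First I would dispatch the two easy claims in part~(1). Conclusion~(a) is a direct rewriting: $-q^{-1}\sqf(v)\in I_0^{(3)}=(I_0)_{30R^3\vare}$ combined with $I_0=[-q^{-1}\beta,-q^{-1}\alpha]$ and $q\leq R$ gives $\sqf(v)\in([\alpha,\beta])_{30R^4\vare}$. Conclusion~(b) is the observation that~\eqref{eq: Jf1} makes $J_{f_1}$ proportional to $\varrho_j^-$, so $J_{f_1}(e^{-t}\norm{v_1})\neq 0$ forces $e^{-t}\norm{v_1}\in\supp(\varrho_j^-)$, which combined with $\theta_{v_1}\in 2\pi\supp(\xi_i^-)$ says precisely that $e^{-t}v_1\in\supp(\varphi_{i,j}^-)$ in the polar notation of Section~\ref{sec: partition of unity}.

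The substantive assertion is (c), and here the plan is to show that $e^{-t}v$ already lies in the ``$w=0$'' slice of~\eqref{eq: def Omega ij 2}. Since $\supp(\varphi_{i,j}^-)\subset\{\varphi_{i,j}^+=1\}\cap\pi_1(\Omega_1)$, the infimum defining $b_{i,j}^-$ yields $\sqrt{4-\norm{e^{-t}v_1}^2}\geq b_{i,j}^-$. Unpacking $(I_{i,j}^-)^{(3)}$ and using the typicality assumption $b_{i,j}^-\geq\sqrt\vare$ then gives $\norm{e^{-t}v_2}^2\leq(b_{i,j}^-)^2-cR^{10}\vare^{3/2}$ for a positive absolute constant $c$. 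Bilinearity of $\sqf$ together with $\absolute{\sqf(v)}\leq R+30R^4\vare$ (from (a)) bounds the remaining cross-term $2\absolute{\sqf(e^{-t}v_1,e^{-t}v_2)}\leq 4Re^{-2t}\leq\vare^2$ whenever $t\geq\log(R^2\vare^{-1})$, which is easily absorbed by the gap. Therefore $(e^{-t}v_1,e^{-t}v_2)\in\Omega$, and $\norm{e^{-t}v_1}>1$ (again from $e^{-t}v_1\in\pi_1(\Omega_1)$) upgrades this to $\Omega_1$, giving~(c).

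Part~(2) reduces to unpacking $\Omega_{i,j}^+$: write $e^{-t}v=(v_1',v_2'+w)$ with $(v_1',v_2')\in\Omega_1$, $\varphi_{i,j}^+(v_1')=1$, and $\norm{w}\leq 3R\vare$. Then $v_1'\in\supp(\varphi_{i,j}^+)\cap\pi_1(\Omega_1)$, and $(v_1',v_2')\in\Omega$ give $\norm{v_2'}\leq\sqrt{4-\norm{v_1'}^2}\leq b_{i,j}^+$; the triangle inequality then yields $e^{-t}\norm{v_2}\leq b_{i,j}^++3R\vare\in I_{i,j}^+$. There is no serious obstacle here: the entire argument is careful bookkeeping of the error hierarchy $\vare\ll R^3\vare\ll R^{10}\vare$, which is consistent thanks to $\vare<R^{-20}$ and the typicality condition.
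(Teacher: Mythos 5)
Your handling of (a) and (b) coincides with the paper's, but for (c) you take a genuinely different route. The paper's proof writes $v_2=\lambda v_1+w$ with $w\perp v_1$ and $\norm{w}\leq 2R\norm{v_1}^{-1}\leq 2Re^{-t}$ (this is \eqref{eq: v2 = lambda v1 + w}, using $\absolute{\sqf(v)}\leq 2R$ and $\norm{v_1}\geq e^t$), deduces $\norm{\lambda \bar v_1}\in\bigl([0,\sqrt{4-\norm{\bar v_1}^2}]\bigr)_{-20R\vare}$ from $e^{-t}\norm{v_2}\in(I_{i,j}^-)^{(3)}$, and then invokes the perturbation-stability clause built into the definition of $\mathring{\mathcal I}_1^-$ to place $e^{-t}v=(\bar v_1,\lambda\bar v_1+e^{-t}w)$ in $\Omega_{i,j}$. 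You instead verify $e^{-t}v\in\Omega_1$ directly from the quadratic characterization of $\Omega$, letting the $200R^{10}\vare$ shrinkage in $I_{i,j}^-$ absorb the cross term $2e^{-2t}\absolute{\sqf(v)}\leq\vare^2$, and then take $w=0$ in \eqref{eq: def Omega ij 2}; this is valid and in fact more economical, since you never use the stability requirement in $\mathring{\mathcal I}_1^-$, only typicality. One point to tighten: your inequality $\norm{e^{-t}v_2}^2\leq(b_{i,j}^-)^2-cR^{10}\vare^{3/2}$ needs $200R^{10}\vare\lesssim b_{i,j}^-$, which at the top of the allowed range ($\vare$ comparable to $R^{-20}$) does not follow from $b_{i,j}^-\geq\sqrt\vare$ alone; it does follow from the hypothesis being non-vacuous, because $(I_{i,j}^-)^{(3)}$ has left endpoint $200R^{10}\vare-30R^3\vare$ and right endpoint $b_{i,j}^--200R^{10}\vare+30R^3\vare$, so a nonzero indicator forces $b_{i,j}^-\geq 340R^{10}\vare$. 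Say this explicitly.

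In part (2) your final step does not follow as written: you obtain $e^{-t}\norm{v_2}\leq b_{i,j}^++3R\vare$, but by \eqref{eq: def I ij -} the interval $I_{i,j}^+=(I'_{i,j,+})_{10\vare}$ only reaches $b_{i,j}^++10\vare$, and $3R\vare\geq 3000\vare$, so the assertion ``$b_{i,j}^++3R\vare\in I_{i,j}^+$'' fails for typical $(i,j)$ (your bound is fine in the non-typical case, where $I_{i,j}^+=[0,3\sqrt\vare]$). In fairness, the paper's own one-line proof of (2) (``similar to (c), see \eqref{eq: v2 = lambda v1 + w}'') runs into the same $3R\vare$: the radial component of the perturbation $w$ with $\norm{w}\leq 3R\vare$ in the definition of $\Omega_{i,j}^+$ cannot be eliminated even using $\sqf(v)\in[\alpha,\beta]$, so the statement really needs the fattening in $I_{i,j}^+$ to be of size at least $3R\vare$ (say $10R^3\vare$); this is harmless downstream, since the subsequent applications only require $f_2^+=1$ at $\bigl(-q^{-1}\sqf(v),e^{-t}\norm{v_2}\bigr)$, which holds on the $10R^3\vare$-fattened interval $(I_{i,j}^+)^{(1)}$. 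You should either prove the inclusion into such a fattened interval and flag the discrepancy, or supply an argument reaching $b_{i,j}^++10\vare$; as it stands the concluding inclusion is the one genuine gap in your write-up, and note also that, unlike the paper's sketch, you never used the hypothesis $\sqf(v)\in[\alpha,\beta]$ there.
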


\begin{proof}
We first prove part~(1).  
If $\sqf(v)\not\in ([\alpha,\beta])_{30R^4\vare}$, then \[-q^{-1}\sqf(v)\not\in (I_0)_{30R^3\vare}=I_0^{(3)},\]
hence 
\[
1_{I_0^{(3)}}\Bigl(-q^{-1}\sqf(v)\Bigr)=0.
\] 
Moreover, if we put $\bar v_1:=e^{-t}v_1$, then $\theta_{\bar v_1}=\theta_{v_1}$, and
$\bar v_1\not\in\supp(\varphi_{i,j}^-)$ would imply that $\varrho_j^-(e^{-t}\norm{v_1})\xi_i^-(\theta_{v_1})=0$. This in turn yields  
\[
0\leq f_1(\cox, e^{-t}\norm{v_1})\xi_i^-(\theta_{v_1})\leq \varrho_j^-(e^{-t}\norm{v_1})\xi_i^-(\theta_{v_1})=0,
\]
see~\eqref{eq: def f1}; thus, $J_{f_1}(e^{-t}\norm{v_1})\xi_i^-(\theta_{v_1})=0$. 
In conclusion, we may assume that 
\be\label{eq: e-tv2 in I}
J_{f_1}(e^{-t}\norm{v_1})\xi_i^-(\theta_{v_1})1_{I_0^{(3)}}\Bigl(-q^{-1}\sqf(v)\Bigr) 1_{(I_{i,j}^-)^{(3)}}\Bigl(e^{-t}\norm{v_2}\Bigr)\neq 0,
\ee
and that 
\[
\sqf(v)\in ([\alpha,\beta])_{30R^4\vare}\quad\text{and}\quad\bar v_1\in\supp(\varphi_{i,j}^-).
\] 
We need to show that (c) is also satisfied. 

Since $\sqf(v)\in ([\alpha,\beta])_{30R^4\vare}$, where $R\geq \max\{10^3,\absolute{\alpha},\absolute{\beta}\}$ and $\vare<R^{-20}$, 
and $\norm{v_1}\geq e^t$, there is $\lambda\in\R$ so that 
\be\label{eq: v2 = lambda v1 + w}
v_2=\lambda v_1+ w,\quad\text{where } w\perp v_1\text{ and } \norm{w}\leq 2R \norm{v_1}^{-1}\leq 2Re^{-t}.
\ee
Thus $e^{-t}v_2=\lambda e^{-t}v_1+e^{-t}w=\lambda \bar v_1+e^{-t}w$. 

Moreover, by~\eqref{eq: e-tv2 in I}, we have $e^{-t}\norm{v_2}\in (I_{i,j}^-)^{(3)}=(I_{i,j}^-)_{30R^3\vare}$, where 
\[
I_{i,j}^-= (I'_{i,j,-})_{-200R^{10}\vare}\quad\text{and}\quad I'_{i,j,-}\subset [0, \sqrt{4-\|\bar v_1\|^2}], 
\] 
see~\eqref{eq: def I ij pm} and~\eqref{eq: def I ij -}. 
Since $\norm{e^{-t}w}\leq 2R e^{-2t}$, 
we conclude that $\norm{\lambda\bar v_1}\in \Bigl([0, \sqrt{4-\|\bar v_1\|^2}]\Bigr)_{-20R\vare}$. In particular, 
\[
(\bar v_1, \lambda \bar v_1)\in \Omega_1\cap \Bigl(\supp(\varphi_{i,j}^-)\times \R^2\Bigr), 
\]
and $v=e^t(\bar v_1, \lambda \bar v_1+e^{-t}w)$ where $\norm{e^{-t}w}\leq 2Re^{-2t}$. 
By the definition of $\mathring{\mathcal I}_{1}^{-}$ and $\Omega_{i,j}$, we conclude that $e^{-t}v\in\Omega_{i,j}$. 
Thus, (c) also holds.  

The proof of~(2) is similar to the proof of (c), see in particular~\eqref{eq: v2 = lambda v1 + w}.  
\end{proof}


\subsection{Upper bound estimates}
Before starting the proof of Theorem~\ref{thm: main}, we record a weaker (but more explicit) version of 
\cite[Thm.\ 2.3]{EMM-Upp}, which will be used in the 
sequel --- see also the very recent work of Kelmer, Kontorovich, and Lutsko~\cite{KKL-Mean}.

For every $R>0$, let 
\[
\ball(R)=\{(v_1, v_2): \norm{v_k}\leq R\}.
\] 
Then $\ball(R)\setminus \ball(e^{-1}{R})=\ball(R)_1\cup\ball(R)_2$, where 
\begin{align*}
\ball_1(R)&=\{(v_1, v_2)\in\ball(R): e^{-1}R<\norm{v_1}\leq R\}\quad\text{and}\\
\ball_2(R)&=\{(v_1, v_2)\in\ball(R): \norm{v_1}\leq e^{-1}R, e^{-1}R<\norm{v_2}\leq R\}.
\end{align*}
We constructed smooth cell decomposition for $\Omega_1$ and $\Omega_2$ in \S\ref{sec: partition of unity}; in the following lemma we will use a similar construction (without repeating this construction) for $\ball_1(R)$ and $\ball_2(R)$.

  \begin{lemma}\label{lem: gen upp bd}
  Let $g=(g_1,g_2)\in G$ and put
  $\Lambda'=g\Lambda$.
 Let 
 \[
 R\geq \max\{10^3,\absolute{\alpha}, \absolute{\beta}, \norm{g_1}^{\pm1}, \norm{g_2}^{\pm1}\},
 \]
 and let $0<\eta<1$. 
 There exists $t_0\ll\absolute{\log\eta}$ so that if $t\geq t_0$, then 
  \[
  \#\{v=(v_1,v_2)\in \Lambda': \max(\norm{v_1},\norm{v_2})\leq Re^t, \alpha\leq \sqf(v)\leq \beta\}\ll e^{(2+\eta)t}
  \]
  where the implied constant depends polynomially on $R$.  
  \end{lemma}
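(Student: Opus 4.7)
The plan is to reduce the lemma to an application of Lemma~\ref{lem: gen bd for int hat f}(1) via a dyadic decomposition in $\|v_1\|$ and $\|v_2\|$, together with a geometric argument that exploits the constraint $Q_0(v)\in[\alpha,\beta]$.

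First I would isolate the vectors in $\Lambda'$ with $v_1=0$ or $v_2=0$. Since $\Lambda=\{(v_1+v_2,\omega(v_1-v_2)):v_1,v_2\in\Z^2\}$, the sublattice $\{w:(w,0)\in\Lambda\}$ equals $2\Z^2$, so the number of $v\in g\Lambda$ with $v_2=0$ and $\|v_1\|\le Re^t$ is $\ll R^2 e^{2t}$, and similarly for $v_1=0$. These vectors have $Q_0(v)=0$, so they contribute only when $0\in[\alpha,\beta]$, and in either case the contribution is $\ll e^{(2+\eta)t}$. It remains to count $v\in g\Lambda_{\mathrm{nz}}$.

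Next, I would decompose dyadically: for integers $s_1,s_2$ with $-K\le s_i\le t+1$ (where $K$ depends on $\|g^{\pm1}\|$, so that smaller $v$'s contribute only $O_R(1)$ which is absorbed), put $\Lambda'_{s_1,s_2}=\{v\in\Lambda'_{\mathrm{nz}}: e^{s_i-1}\le\|v_i\|\le e^{s_i},\ Q_0(v)\in[\alpha,\beta]\}$. Setting $s_0=\max(s_1,s_2)$ and applying Lemma~\ref{lem: gen bd for int hat f}(1) with the same $\eta$, at scale $t$ replaced by $s_0$ (taking $\hat f$ the indicator of a ball of radius $CR$ for a constant $C$), we get
\[
\int_0^{2\pi}\hat f(\Delta(a_{s_0}\rot_\theta)g\Gamma')\,\diff\!\theta\ll e^{\eta s_0}.
\]
Expanding $\hat f$ as a sum over $v\in g\Lambda_{\mathrm{nz}}$, this bound becomes $\sum_v |\Theta(v)|\ll e^{\eta s_0}$, where $\Theta(v):=\{\theta\in[0,2\pi]:\|a_{s_0}\rot_\theta v_i\|\le CR,\ i=1,2\}$.

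The main step is to lower-bound $|\Theta(v)|$ uniformly on $\Lambda'_{s_1,s_2}$, and here the constraint $Q_0(v)\in[\alpha,\beta]$ is crucial. Writing $v_i=\|v_i\|(\cos\phi_i,\sin\phi_i)$ one has $Q_0(v)=\|v_1\|\|v_2\|\sin(\phi_2-\phi_1)$, so $|\sin(\phi_2-\phi_1)|\le R e^{-s_1-s_2}$, forcing $v_1$ and $v_2$ to be nearly parallel mod $\pi$. A direct calculation shows that the condition $\|a_{s_0}\rot_\theta v_i\|\le CR$ is equivalent to $|\cos(\theta+\phi_i)|\lesssim Re^{-s_0}/\|v_i\|$, i.e.\ $\theta$ lies in an interval of measure $\asymp Re^{-s_0-s_i}$ around $-\phi_i\pm\pi/2$. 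The near-parallelism of $v_1,v_2$ (up to additive error $Re^{-s_1-s_2}\le Re^{-s_0-\min(s_1,s_2)}$) guarantees these two intervals overlap, and the intersection has measure $\gtrsim Re^{-2s_0}$. Consequently $\#\Lambda'_{s_1,s_2}\cdot Re^{-2s_0}\ll e^{\eta s_0}$, giving $\#\Lambda'_{s_1,s_2}\ll e^{(2+\eta)s_0}/R$. Summing over the $O(t^2)$ dyadic pairs $(s_1,s_2)$ with $s_0\le t$ yields the total bound $\ll t^2 e^{(2+\eta)t}\ll e^{(2+\eta')t}$ for any $\eta'>\eta$, proving the lemma after renaming $\eta$.

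The main technical obstacle is the overlap argument in the final paragraph: one must verify that the two $\theta$-windows for $v_1$ and $v_2$ indeed intersect on a set of measure $\asymp Re^{-2s_0}$, rather than missing each other. This is where the hypothesis $Q_0(v)\in[\alpha,\beta]$ with $|\alpha|,|\beta|\le R$ is used in an essential way, and the computation needs to be done uniformly in $(s_1,s_2)$; once this is checked, the remaining summation is elementary.
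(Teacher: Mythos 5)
Your route is essentially a bare\nobreakdash-hands version of the paper's argument: both proofs rest on the same key input, part~(1) of Lemma~\ref{lem: gen bd for int hat f} applied at a rescaled time, together with the observation that each $v$ with $Q_0(v)\in[\alpha,\beta]$ and $\max\norm{v_i}\asymp e^{s_0}$ occupies a set of $\theta$'s of measure $\gtrsim Re^{-2s_0}$. The paper implements the second ingredient through the smooth cell decomposition of \S\ref{sec: partition of unity} and Lemma~\ref{lem: rotating to vertical} (each vector contributes $\asymp q^{-1}\vare e^{-2t}$ to the circular integral of the bump functions), and then sums over shells $e^{t-\ell}(\ball\setminus e^{-1}\ball)$, $0\le\ell\le t/2$, with the trivial count \eqref{eq: contribution of vectors < et 1} for the inner ball; you replace the smooth machinery by a direct interval\nobreakdash-overlap computation, which is a legitimate and arguably more transparent alternative since the upper bound needs no lower\nobreakdash-bound counterpart here. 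Your flagged overlap step does check out, but only after fixing the radius constant $C$ large enough: the offset between the two window centers (mod $\pi$) is at most $\tfrac{\pi}{2}R/(\norm{v_1}\norm{v_2})$, while the sum of the half\nobreakdash-widths is at least $CRe^{-s_0}\bigl(\norm{v_1}^{-1}+\norm{v_2}^{-1}\bigr)\ge \tfrac{C}{e}\,R/(\norm{v_1}\norm{v_2})$ because $\max\norm{v_i}\ge e^{s_0-1}$; so for, say, $C\ge 10$ the windows intersect in a set of measure $\gg Re^{-2s_0}$, uniformly.

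Two points in your write\nobreakdash-up do need repair. First, the double dyadic decomposition in $(s_1,s_2)$ costs you a factor $\asymp t$ (the geometric sum over $s_0=\max(s_1,s_2)$ is fine, but the count of admissible $\min(s_1,s_2)$ at the top scale is $\asymp t$), and absorbing $t$ into $e^{\eta t}$ forces $t\gtrsim \eta^{-1}\log(1/\eta)$, which violates the stated requirement $t_0\ll\absolute{\log\eta}$. The fix is already contained in your own estimate: the lower bound $\absolute{\Theta(v)}\gg Re^{-2s_0}$ uses only $\max\norm{v_i}\in[e^{s_0-1},e^{s_0}]$ and $\absolute{Q_0(v)}\le R$, never the dyadic size of the smaller vector, so you should decompose only according to $\max(\norm{v_1},\norm{v_2})$; then the sum over shells is geometric and no factor of $t$ appears, exactly as in the paper's shell\nobreakdash-by\nobreakdash-shell summation. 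Second, Lemma~\ref{lem: gen bd for int hat f}(1) is only available for $s_0\ge t_\eta\ll\absolute{\log\eta}$; the shells with $s_0<t_\eta$ must be disposed of by the trivial lattice\nobreakdash-point bound (as in \eqref{eq: contribution of vectors < et 1}), which gives $\ll R^{O(1)}e^{4t_\eta}\ll e^{2t}$ once $t\ge 2t_\eta$, so taking $t_0\asymp t_\eta$ keeps the threshold of the right size. With these adjustments your argument proves the lemma.
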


  \begin{proof}
  The following basic lattice point estimate will be used:
\be\label{eq: contribution of vectors < et 1}
\#\{v\in \Lambda'\cap e^{t/2}\ball(R)\}\ll e^{2t}
\ee
where the implied constant depends polynomially on $R$.  

Since $R$ is fixed, we will denote $\ball_k(R)$ by $\ball_k$ ($k=1,2$) for the rest of the proof. Let $\vare= 10^{-6}R^{-20}$. 
Apply the construction in \S\ref{sec: partition of unity} for $\pi_1(\ball_1)$ with this $R$ and $\vare$. In particular, the functions $\xi_i^+$ are defined as in \ref{xi-1} with 
\[
I_i=[\tfrac{i-1}{E}, \tfrac{i}{E}]\quad\text{for all $1\leq i\leq E$ where $\frac1E\leq 100R^{10}\vare\leq \frac1{E-1}$.}
\]
and $\varrho_j^+$ are defined as in \ref{rho-1} with 
\[
I_j'=[\tfrac{j-1}{E'}, \tfrac{j}{E'}]\quad\text{for all $1\leq j\leq E'$ where $\frac1{E'}\leq 100R^{9}\vare\leq \frac1{E'-1}$.}
\]
For all $i,j$ as above, let $\xi_i=\xi^+_i$, $\varrho_j=\varrho_j^+$, and let $\varphi_{i,j}=\xi_i\varrho_j$. Put 
\[
\mathcal I_1^+=\{(i,j): \supp(\varphi_{i,j})\cap\pi_1(\ball_1)\neq \emptyset\};
\]
for all $(i,j)\in\mathcal I^+_1$, we have $\supp(\varrho_j)\subset [e^{-2}R, R]\subset [R^{-1}, R]$.

  For all $(i,j)\in\mathcal I_1^+$, put 
  \[
  \hat\ball_{i,j}=\{(v_1, v_2)\in\R^4: \varphi_{i,j}(v_1)=1, \norm{v_2}\leq R\}.
  \]
  Then $1_{\ball_1}\leq \sum_{\mathcal I_1^+}1_{\hat\ball_{i,j}}\leq 4_{\ball(2R)_1}$. 
   
  Define $f_1$ as in~\eqref{eq: def f1} for $q$ and $\varrho_j$, and with $200R^{10}\vare$ instead of $\vare$. Let
\be\label{eq: def f2 gen upp bd}
f_{2}=f_{[-q^{-1}\beta,-q^{-1}\alpha], [0, R]},
\ee
see~\eqref{eq: def f2}. Put $f_{i,j}=f_1f_{2}$. By the choice of $R$, we have $\sum f_{i,j}\leq 4_{\ball(2R)}$.

By Lemma~\ref{lem: rotating to vertical}, for any $v=(v_1, v_2)\in e^t\hat\ball_{i,j}$ with $\sqf(v)\in[\alpha,\beta]$, we have 
\begin{multline}\label{eq: gen upp bd 1}
e^{2t}\int_0^{2\pi} f_{i,j}(\darot v)\xi_i(\theta)\diff\!\theta=\\
q^{-1}(1+O(\vare))J_{f_1}\Bigl(e^{-t}\norm{v_1}\Bigr)\xi_i(\theta_{v_1}) f_{2}\Bigl(-q^{-1}\sqf(v), e^{-t}\norm{v_2}\Bigr)\\
+ O(\Lip(f_1)\Lip(\xi_i)e^{-2t})
\end{multline}
where the implied constant depends on $R$. 

First note that, if $t$ is large enough compared to $R$, we have  
\be\label{eq: gen upp bd error}
O(\Lip(f_1)\Lip(\xi_i)e^{-2t})\ll \vare^{-20}e^{-2t}\leq \vare^2.
\ee
Furthermore, for any $v=(v_1,v_2)\in e^t\hat\ball_{i,j}$ so that $Q_0(v)\in[\alpha, \beta]$, we have 
$f_{2}\Bigl(q^{-1}\sqf(v), e^{-t}\norm{v_2}\Bigr)=1$. 
Thus, using~\eqref{eq: Jf1}, we have  
\be\label{eq: gen upp bd 2}
J_{f_1}\Bigl(e^{-t}\norm{v_1}\Bigr)\xi_i(\theta_{v_1}) f_{2}\Bigl(q^{-1}\sqf(v), e^{-t}\norm{v_2}\Bigr)=\vare+O(\vare^2).
\ee

Put $x=g\Gamma'$. 
Summing~\eqref{eq: gen upp bd 1}, over all $v\in \Lambda'\cap e^t\hat\ball_{i,j}$ so that $Q_0(v)\in[\alpha, \beta]$
and using~\eqref{eq: gen upp bd 1} and~\eqref{eq: gen upp bd 2}, we conclude that 
\be\label{eq: gen upp bd main}
\vare\Bigl(\#\{v\in \Lambda'\cap e^t\hat\ball_{i,j}: \alpha\leq \sqf(v)\leq \beta\}\Bigr)\!\ll\! qe^{2t}\int_0^{2\pi} \hat f_{i,j}(\darot x)\diff\!\theta,
\ee 
where we used $0\leq \xi_i\leq 1$ and replaced $\vare^2+\vare+O(\vare^2)$ obtained from adding~\eqref{eq: gen upp bd error} and~\eqref{eq: gen upp bd 2} by $O(\vare)$.  

Summing~\eqref{eq: gen upp bd main} over all $(i,j)\in\mathcal I_1^+$ 
and using $\sum_{i,j} f_{i,j}\leq 4_{\ball(2R)}$, we get 
\[
\#\{v\in \Lambda'\cap e^t\ball_1: \alpha\leq \sqf(v)\leq \beta\}\ll \vare^{-1}qe^{2t}\int_0^{2\pi} \hat 1_{\ball(2R)}(\darot x)\diff\!\theta.
\]
One obtains a similar bound for the number $v\in\Lambda'\cap e^t\ball_2$ with $\sqf(v)\in[\alpha,\beta]$. Since $\ball\setminus e^{-1}\ball=\ball_1\cup\ball_2$ and $\vare=10^{-6}R^{-20}$, we conclude   
\begin{multline*}
    \#\{v\in \Lambda'\cap e^t(\ball\setminus e^{-1}\ball): \alpha\leq \sqf(v)\leq \beta\}\ll \\ \vare^{-1} qe^{2t}\int_0^{2\pi} \hat 1_{\ball(2R)}(\darot x)\diff\!\theta.
\end{multline*}
Let $t_{\eta}$ be as in Lemma~\ref{lem: gen bd for int hat f} applied with $\eta$ and $2R$, and let $t>10 t_{\eta}$. 
Then by Lemma~\ref{lem: gen bd for int hat f}, 
\[
\#\{v\in \Lambda'\cap e^t(\ball\setminus e^{-1}\ball): \alpha\leq \sqf(v)\leq \beta\}\ll e^{(2+\eta)t}.
\]
We may repeat the above with 
$t-\ell$ for all $0\leq \ell\leq t/2$, and obtain 
\be\label{eq: gen upp bd main 1}
\#\{v\in \Lambda'\cap e^{t-\ell}(\ball\setminus e^{-1}\ball): \alpha\leq \sqf(v)\leq \beta\}\ll e^{(2+\eta)(t-\ell)},
\ee
we also used $t-\ell\geq t/2\geq t_{\eta}$ when applying Lemma~\ref{lem: gen bd for int hat f} with $t-\ell$.  

Since $e^t(e^{-\ell}\ball)=e^{t-\ell}\ball$, summing~\eqref{eq: gen upp bd main 1} over $0\leq \ell\leq t/2$, we conclude  
\be\label{eq: gen upp bd main 2}
\#\{v\in \Lambda'\cap e^t(\ball\setminus e^{-t/2}\ball): \alpha\leq \sqf(v)\leq \beta\}\ll e^{(2+\eta)t}.
\ee
The lemma follows from~\eqref{eq: gen upp bd main 2} and~\eqref{eq: contribution of vectors < et 1}. 
\end{proof}


\section{Proof of Theorem~\ref{thm: main}}\label{sec: proof of main}
The proof relies on Theorem~\ref{thm: equi of f hat} and will be completed in some steps. 
Recall that $\torus=\R^2/\Delta$ and that $\Delta^*$ denotes the dual lattice. 
In view of our normalization, $2\pi\Delta^*=g_\torus \Z^2$ where $g_\torus\in\SL_2(\R)$. 
Let 
\be\label{eq: def g}
g=(g_\torus, -\omega g_\torus\omega)=(g_1,g_2)\in G \qquad\text{where $\omega=\begin{pmatrix}0 & -1\\ 1& 0\end{pmatrix}$.}
\ee
\subsection{Passage to $\sqf$}\label{sec: pass to sqf}
As it was observed in~\eqref{eq: B and Q0}, if $\lambda_i=\norm{v_i}^2$, where for $i=1,2$, $v_i\in 2\pi\Delta^*$  
is an eigenvalue of the Laplacian of $\torus$, then 
\be\label{eq: B and Q0'}
\lambda_1-\lambda_2=\sqf(v_1+v_2, \omega(v_1-v_2)).
\ee

Define $\Omega=\{(v_1+v_2, \omega(v_1-v_2)): \norm{v_i}\leq 1\}$; 
and let 
\[
\Lambda'=\{(v_1+v_2, \omega(v_1-v_2)): v_1,v_2\in 2\pi\Delta^*\}=g\Lambda
\] 
where $\Lambda = \{(v_1+v_2, \omega(v_1-v_2)): v_1,v_2\in \Z^2\}$.

Let $T$ be a (large) parameter, and put $t=\frac12\log T$. 
In view of~\eqref{eq: B and Q0'},  
\be\label{eq: counting Omega'}
R_\torus(\alpha, \beta, T)=\#\{v\in \Lambda'_{\rm nz}\cap e^t\Omega:  \alpha\leq \sqf(v)\leq \beta\};
\ee
recall that $\Lambda'_{\rm nz}=\{(w_1, w_2)\in\Lambda': w_i\neq0\}$. 

Let $A$  and $\delta$ be as in Theorem~\ref{thm: main}. 
Without loss of generality, we assume $A\geq 10^5$ and $0<\delta<10^{-5}$. 
Let $\hat A$ be given by Theorem~\ref{thm: equi of f hat} applied with $10^3A$. 
We will show the claim in Theorem~\ref{thm: main} holds with $A'=10\hat A$. 
To simplify the notation, write $\bar A=10^3A$ for the rest of the proof.

Thus let us assume~\eqref{eq: Diophantine condition} holds for $A'$:
for $T\geq T_0$ ($T_0$ is a yet to be determined large constant) and all $(p_1, p_2, q)\in\Z^3$ with $T^{\delta/A'}<q<T^{\delta}$, 
\be\label{eq: Diophantine condition'}
\Bigl| \tfrac{\mathsf b}{\mathsf a} -\tfrac{p_1}{q}\Bigr|+ \Bigl|\tfrac{\mathsf c}{\mathsf a}-\tfrac{p_2}{q}\Bigr|> q^{-A}.
\ee
This implies that so long as $t=\frac12\log T$ is large enough (depending on $\mathsf a$, $\mathsf b$, and $\mathsf c$), we have 
\be\label{eq: g2-1 g1 proof}
g_2^{-1}g_1=-\omega g_\torus^{-1}\omega g_\torus= \begin{pmatrix}\mathsf a & \mathsf b\\ \mathsf b &\mathsf c\end{pmatrix}
\ee
satisfies~\eqref{eq: Diophantine main them Q0} with $t$, $\rho=\delta/10$, $\hat A$. That is:
for every $Q\in\Mat_2(\Z)$ with $e^{\rho t/\hat A}\leq \norm{Q}\leq e^{\rho t}$ and all $\lambda\in\R$, we have  
\be\label{eq: Diophantine main them Q0'}
\norm{g_2^{-1}g_1-{\lambda}Q}>\norm{Q}^{-A}=\norm{Q}^{-\bar A/1000}.
\ee

\begin{lemma}\label{lem: exceptional subspaces t-ell}
    There are at most two $g\Z^4$-rational two dimensional subspaces 
    $L,L'$ so that if for some $2t/5\leq s\leq t$, $L_s$ is a
    $(\delta_1/\bar A, \delta_1, s)$-exceptional subspace, then $L_s=L$ or $L'$.
\end{lemma}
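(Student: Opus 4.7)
The plan is to consolidate all the exceptional subspaces $L_s$ (as $s$ ranges over $[2t/5,t]$) into a single class of exceptional subspaces at the uniform scale $t$, at which Lemma~\ref{lem: special subspace} becomes directly applicable; then I would use the Diophantine hypothesis~\eqref{eq: Diophantine main them Q0'} to exclude the lemma's degenerate alternative. First, I would show that any $L_s$ which is $(\delta_1/\bar A,\delta_1,s)$-exceptional for some $s\in[2t/5,t]$ is in fact $(\delta_1/\bar A,2\bar A/5,t)$-exceptional. Indeed, if $(v_1,0),(0,v_2)\in\Z^4$ are spanning vectors with $\norm{g_iv_i}\leq e^{(\delta_1/\bar A)s}$ and $\absolute{\sqf(g_1v_1,g_2v_2)}\leq e^{-\delta_1 s}$, then $s\leq t$ gives $\norm{g_iv_i}\leq e^{(\delta_1/\bar A)t}$, while $s\geq 2t/5$ gives
\[
\absolute{\sqf(g_1v_1,g_2v_2)}\leq e^{-\delta_1 s}\leq e^{-2\delta_1 t/5}=e^{-(2\bar A/5)(\delta_1/\bar A)t},
\]
so the parameters $(\delta_1/\bar A, 2\bar A/5, t)$ are met with the same spanning vectors.

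Next I would apply Lemma~\ref{lem: special subspace} with $(\rho,A)=(\delta_1/\bar A,\,2\bar A/5)$ at scale $t$. The hypothesis $A\geq 10^3$ follows from $\bar A\geq 10^8$, and $\rho\leq A/100$ reduces to $\delta_1/\bar A\leq \rho/100$, which is supplied by Theorem~\ref{thm: equi of f hat}. Alternative (1) of the lemma yields at most two $(\delta_1/\bar A,2\bar A/5,t)$-exceptional subspaces, which is exactly the desired conclusion. It therefore remains to exclude alternative (2), which would furnish $Q\in\Mat_2(\Z)$ and $\lambda\in\R$ with $\norm{Q}\leq e^{100(\delta_1/\bar A)t}$ and $\norm{g_2^{-1}g_1-\lambda Q}\leq e^{-(2\bar A/5-100)(\delta_1/\bar A)t}$.

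The main obstacle is ruling out this alternative, and the subtlety is that the $Q$ produced need not satisfy the lower bound $\norm{Q}\geq e^{\rho t/\hat A}$ required to invoke~\eqref{eq: Diophantine main them Q0'} (the upper bound $\norm{Q}\leq e^{\rho t}$ is automatic since $100\delta_1/\bar A\leq\rho$). I would handle this by a simple rescaling: choose $k\in\Z_{\geq 1}$ with $\norm{kQ}\in[e^{\rho t/\hat A},2e^{\rho t/\hat A}]$, which is possible because $\norm{Q}\geq 1$ forces the admissible real interval for $k$ to have length at least one. Setting $(Q',\lambda'):=(kQ,\lambda/k)$, the pair satisfies $Q'\in\Mat_2(\Z)$ and $\norm{g_2^{-1}g_1-\lambda'Q'}=\norm{g_2^{-1}g_1-\lambda Q}$, while $\norm{Q'}$ now lies in the Diophantine window. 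Applying~\eqref{eq: Diophantine main them Q0'} to $(Q',\lambda')$ and using $\rho/\hat A\leq \delta_1/\bar A$ (from Theorem~\ref{thm: equi of f hat}) produces
\[
\norm{g_2^{-1}g_1-\lambda'Q'}>\norm{Q'}^{-\bar A/1000}\geq e^{-\delta_1 t/1000-O(1)},
\]
which contradicts the upper bound $e^{-(2\bar A/5-100)(\delta_1/\bar A)t}\leq e^{-\delta_1 t/3}$ for $t$ large. This contradiction excludes alternative (2) and completes the proof.
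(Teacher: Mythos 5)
Your proof is correct and follows essentially the same route as the paper: absorb every scale $s\in[2t/5,t]$ into the single notion of $(\delta_1/\bar A,\,2\bar A/5,\,t)$-exceptional, invoke Lemma~\ref{lem: special subspace} with these parameters, and rule out its second alternative via~\eqref{eq: Diophantine main them Q0'}, passing to an integer multiple of $Q$ to land in the Diophantine window --- exactly the paper's argument. The only nit is your justification of the rescaling step: the admissible interval for $k$ has length $e^{\rho t/\hat A}/\norm{Q}$, which is $\geq 1$ because rescaling is only needed when $\norm{Q}<e^{\rho t/\hat A}$ (not because $\norm{Q}\geq 1$); when $\norm{Q}$ already exceeds $e^{\rho t/\hat A}$ one applies~\eqref{eq: Diophantine main them Q0'} to $Q$ itself.
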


\begin{proof}
Let $2t/5\leq s\leq t$. Recall that a $(\delta_1/\bar A, \delta_1, s)$-exceptional subspace is spanned by two vectors $(g_1w_1, 0), (0,g_2w_2)\in g\Z^4$ satisfying  
\be\label{eq: special proof main}
\begin{aligned}
&0<\norm{g_iw_i}\leq e^{\delta_1 s/\bar A}, \quad\text{and}\\ 
&\absolute{\sqf(g_1w_1, g_2w_2)}\leq e^{-\delta_1 s}.
\end{aligned}
\ee
We also note that 
\[
e^{\delta_1 s/\bar A}\leq e^{\delta_1t/\bar A}\quad\text{and}\quad e^{-\delta_1 s}\leq e^{-2\delta_1t/5}
\] 
for any $2t/5\leq s\leq t$. 

Assume now that there are three pairs (possibly corresponding to different values of $2t/5\leq s\leq t$) so that~\eqref{eq: special proof main} is satisfied. Then    
Lemma~\ref{lem: special subspace}, applied with $\delta_1/\bar A$ and $2\bar A/5$, implies that there is $Q\in\Mat_2(\Z)$ with $\norm{Q}\leq e^{100\delta_1t/\bar A}$ so that  
\begin{multline*}
\norm{g_2^{-1}g_1-{\lambda}Q}=\norm{\begin{pmatrix}\mathsf a & \mathsf b\\ \mathsf b &\mathsf c\end{pmatrix}-{\lambda}Q}\leq e^{-(\frac{2\bar A}5-100)(\delta_1/\bar A)}\leq\\ \max\Bigl\{\norm{Q}^{-\bar A/1000}, 100e^{-\rho\bar A t/(1000\hat A)}\Bigr\}.
\end{multline*}
Since $\rho/\hat A\leq \delta_1/\bar A\leq \rho/100$, this contradicts the fact that 
$g_2^{-1}g_1$ satisfies~\eqref{eq: Diophantine main them Q0'} with $t$, $\rho$, $\hat A$ --- note that if $\norm{Q}\leq e^{\rho t/\hat A}$, we may replace $Q$ by an integral multiple $nQ$ with $e^{\rho/\hat A}\leq \norm{nQ}\leq 2e^{\rho t/\hat A}$ . The proof is complete.
\end{proof}

Let $L$ and $L'$ be as in Lemma~\ref{lem: exceptional subspaces t-ell}. 
For a set $\mathsf E\subset \R^4$ and $s>0$ we let 
\[
\begin{aligned}
N_s(\mathsf E)&:=\#\{v\in \Lambda'_{\rm nz}\cap e^s\mathsf E: \alpha\leq \sqf(v)\leq\beta\},\\
N_s'(\mathsf E)&:=\#\{v\in (\Lambda'_{\rm nz}\setminus (L_s\cup L'_s)) \cap e^s\mathsf E: \alpha\leq \sqf(v)\leq\beta\}.
\end{aligned}
\]


\subsection{Counting and circular averages}\label{sec: counting and circular}
For the rest of the proof, we fix $\vare=e^{-\eta' t}$ for some $0<\eta'<1/100$ which is small and will be optimized later. We will also assume $\beta-\alpha\geq \vare$ otherwise Theorem~\ref{thm: main} holds trivially. 

Recall that   
\[
\Omega=\{(w_1+w_2, \omega(w_1-w_2)): \norm{w_i}\leq 1\},
\]
and that $\Omega\setminus\ball=\Omega_1\cup \Omega_2$ where $\ball=\{(v_1,v_2)\in\R^4: \norm{v_k}\leq 1\}$, and 
\begin{align*}
    \Omega_1&=\Bigl\{(v_1, v_2)\in\Omega: \norm{v_1}> 1\Bigl\}\quad\text{and}\\
    \Omega_2&=\Bigl\{(v_1, v_2)\in\Omega: \norm{v_1}\leq 1, \norm{v_2}>1\Bigl\}.
\end{align*}

Let $R$ be a large constant (we will always assume $R<\vare^{-1/20}$, hence, $R$ is much smaller that $e^t$), satisfying  
\[
R\geq \max\{10^3, \absolute{\alpha}, \absolute{\beta},\absolute{\mathsf a}, \absolute{\mathsf b},\absolute{\mathsf c}\};
\]
note that $\pi_k(\Omega)\subset B(0,R)$.

Apply the construction in \S\ref{sec: partition of unity} for $\pi_k(\Omega_k)$ with $\vare$ and $R$ here. The analysis for $k=1$ and $2$ are similar, thus, let $k=1$ until further notice. Let 
\[
\varphi_{i,j}^\pm=\xi_i^\pm\varrho_j^\pm\qquad\text{ for }(i,j)\in\mathcal I_1^\pm.
\] 
Note that $\supp(\varrho_j^\pm)\subset [q-200R^{10}\vare, q]\subset [R^{-1},R]$ for some $R^{-1}\leq q\leq R$, see \ref{rho-1} --- indeed in the case at hand, we have $1\leq q\leq 2$.

For ${\boldsymbol\sigma}=\pm$, define $f_{1}^{\boldsymbol\sigma}$ as in~\eqref{eq: def f1}
for $q$ and $\varrho_j^{\boldsymbol\sigma}$. Let 
\be\label{eq: def f2 proof}
f_{2}^{\boldsymbol\sigma}=f_{I_{0}^{\boldsymbol\sigma}, I_{i,j}^{\boldsymbol\sigma}},
\ee
where $I_{0}^+=[-q^{-1}\beta,-q^{-1}\alpha]$ and $I_{0}^-=\Bigl(I_{0}^+\Bigr)_{-100R^5\vare}$, see~\eqref{eq: def f2} and~\eqref{eq:modified intervals}. 
Put 
\[
f_{i,j}^{\boldsymbol\sigma}=f_{1}^{\boldsymbol\sigma} f_{2}^{\boldsymbol\sigma}.
\]

\begin{lemma}\label{lem: upp and lower bd}
Let the notation be as above, and let $L$ and $L'$ denote $(\delta_1/\bar A, \delta_1, t)$-exceptional subspaces if they exist. 

If $(i,j)\in\mathring{\mathcal I}_1^-$, then  
\begin{multline}\label{eq: upp and lower bd phi ij}
qe^{2t}\sum_{v\in\Lambda'_{\rm nz}\setminus (L\cup L')}\int_0^{2\pi} f_{i,j}^{-}(\darot v)\xi_i^-(\theta)\diff\!\theta\leq \\
\qquad\qquad (\vare+O(\vare^2)) \cdot N_t'(\Omega_{i,j}) + O(\vare^{-21}).
\end{multline} 
Moreover for every $(i,j)\in\mathcal I_1^+$, we have 
\begin{multline}\label{eq: upp bd phi ij +}
(\vare+O(\vare^2))\cdot N_t'(\Omega_{i,j}^+)\leq 
qe^{2t}\!\!\!\sum_{v\in\Lambda'_{\rm nz}\setminus(L\cup L')}\int_0^{2\pi}\!\! f_{i,j}^{+}(\darot v)\xi_i^+(\theta)\diff\!\theta.
\end{multline}  
The implied constants depend polynomially on $R$. 
\end{lemma}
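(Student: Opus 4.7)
The plan is to prove both inequalities by exchanging the $v$-sum with the $\theta$-integral, applying Lemma~\ref{lem: rotating to vertical} termwise, and using Lemma~\ref{lem: partition of unity} to identify the nonvanishing terms with those counted by $N_t'(\Omega_{i,j})$ or $N_t'(\Omega_{i,j}^+)$. The cumulative Lipschitz errors are controlled by the a priori count of Lemma~\ref{lem: gen upp bd}.

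For \eqref{eq: upp and lower bd phi ij}, I would apply the upper bound \eqref{eq:integral algebraic lemma'} of Lemma~\ref{lem: rotating to vertical} with $f = f_{i,j}^- = f_1^- f_2^-$ and $\xi = \xi_i^-$ to each $v \in \Lambda'_{\rm nz}\setminus(L\cup L')$. For $v$ whose integrand does not vanish identically in $\theta$, Lemma~\ref{lem: rotating to vertical 2} forces condition~\eqref{eq: mathcal E}; since $f_2^- = f_{I_0^-, I_{i,j}^-}$ is supported in $(I_0^-)^{(2)}\subset I_0^+ = [-q^{-1}\beta,-q^{-1}\alpha]$ (using $100R^5\vare>30R^3\vare$ for $R\geq 1$), this actually yields $\sqf(v)\in[\alpha,\beta]$, and Lemma~\ref{lem: partition of unity}(1)(c) gives $e^{-t}v\in \Omega_{i,j}$. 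So each such $v$ contributes to $N_t'(\Omega_{i,j})$. Since $J_{f_1^-}(e^{-t}\|v_1\|)\leq \vare+O(\vare^2)$ and $\xi_i^-\leq 1$, the main terms sum to at most $(\vare+O(\vare^2))N_t'(\Omega_{i,j})$. The Lipschitz error per term is $O(\vare^{-20}e^{-2t})$, and the a priori bound $N_t'(\Omega_{i,j})\leq e^{(2+\eta)t}$ from Lemma~\ref{lem: gen upp bd} makes the total $O(\vare^{-20}e^{\eta t})\leq O(\vare^{-21})$ provided $\eta\leq\eta'$ (recall $\vare=e^{-\eta' t}$).

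For \eqref{eq: upp bd phi ij +}, I would restrict the sum to the $v$ actually contributing to $N_t'(\Omega_{i,j}^+)$; by nonnegativity of the integrand this provides a valid lower bound for the full sum. The definition of $\Omega_{i,j}^+$ forces $\varphi_{i,j}^+(e^{-t}v_1)=1$, so $\varrho_j^+(e^{-t}\|v_1\|)=\xi_i^+(\theta_{v_1})=1$; together with $\sqf(v)\in[\alpha,\beta]$ and Lemma~\ref{lem: partition of unity}(2), which gives $e^{-t}\|v_2\|\in I_{i,j}^+$, this yields $f_2^+(-q^{-1}\sqf(v),e^{-t}\|v_2\|)=1$. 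The equality \eqref{eq:integral algebraic lemma} of Lemma~\ref{lem: rotating to vertical} then gives $qe^{2t}\int f_{i,j}^+(\darot v)\xi_i^+(\theta)\diff\!\theta=\vare+O(\vare^2)+O(\vare^{-20}e^{-2t})$ per $v$, and summing the Lipschitz error $N_t'(\Omega_{i,j}^+)\cdot O(\vare^{-20}e^{-2t})$ is absorbed into the $O(\vare^2)N_t'(\Omega_{i,j}^+)$ term once $\eta'$ is chosen so that $\vare^{-22}e^{-2t}$ is uniformly bounded.

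The main technical point is the careful interval bookkeeping around $I_0^{\boldsymbol\sigma},I_{i,j}^{\boldsymbol\sigma}$: the shrunk intervals $I_0^-, I_{i,j}^-$ in part~(1) propagate the hypothesis ``integrand nonvanishing'' into the full conclusion of Lemma~\ref{lem: partition of unity}(1), while in part~(2) the enlarged intervals $I_0^+, I_{i,j}^+$ admit a direct application of the equality form \eqref{eq:integral algebraic lemma}; matching the implicit $R$-dependent constants and the distinction between $\vare$ here and the $\vare'=200R^{10}\vare$ appearing in Lemma~\ref{lem: partition of unity} is harmless, since $R$ is absorbed in the polynomial constants.
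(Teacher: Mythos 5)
Your proposal is correct and follows essentially the same route as the paper: apply~\eqref{eq:integral algebraic lemma'} together with Lemma~\ref{lem: partition of unity}(1) for the minus case and~\eqref{eq:integral algebraic lemma} together with Lemma~\ref{lem: partition of unity}(2) for the plus case, identifying the nonzero main terms with the points counted by $N_t'(\Omega_{i,j})$ resp.\ $N_t'(\Omega_{i,j}^+)$ and absorbing the summed Lipschitz errors using Lemma~\ref{lem: gen upp bd}. The only (harmless) imprecision is that the error terms $\mathcal E$ can be nonzero for lattice points $v$ that are \emph{not} counted by $N_t'(\Omega_{i,j})$ (boundary effects where the main term vanishes), so the number of such terms should be bounded by the full lattice-point count of Lemma~\ref{lem: gen upp bd} rather than by $N_t'(\Omega_{i,j})$ itself --- which is exactly what the paper does and what your invocation of that lemma in fact delivers.
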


The proof is similar to the proof of Lemma~\ref{lem: gen upp bd}. 
More precisely, we will use~\eqref{eq:integral algebraic lemma'} for $f_{i,j}^{-}$ 
and~\eqref{eq:integral algebraic lemma} for $f_{i,j}^{+}$; let us now turn to the details.

\begin{proof}
When there is no confusion we drop $i,j$ from the notation and denote $f_{i,j}^\pm$ by $f^\pm$, $\xi_i^\pm$ by $\xi^\pm$, etc. 
Also, we will put $I_0=I_{0}^-$ and $I_1=I_{i,j}^-$, but will keep the more cumbersome notation for $I_{0}^+$ and $I_{i,j}^+$.  

By~\eqref{eq:integral algebraic lemma'} in Lemma~\ref{lem: rotating to vertical} applied with $f^-=f_{i,j}^-$, for any $v\in\R^4$, we have  
\begin{multline}\label{eq: lower bd 1'}
qe^{2t}\int_0^{2\pi} f^-(\darot v)\xi^-(\theta)\diff\!\theta\leq\\
(1+O(\vare))J_{f_{1}^-}\Bigl(e^{-t}\norm{v_1}\Bigr)\xi^-(\theta_{v_1}) 1_{I_0^{(3)}}\Bigl(-q^{-1}\sqf(v)\Bigr)1_{I_1^{(3)}}\Bigl(e^{-t}\norm{v_2}\Bigr)+ \mathcal E,
\end{multline}
where $I^{(k)}=I_{10kR^3\vare}$ and
\be\label{eq: def of Error}
\mathcal E=O\Bigl(\Lip(f_{1}^-)\Lip(\xi^-)e^{-2t}\Bigr) 
\ee
furthermore, $\mathcal E=0$ if 
\[
\Bigl(-q^{-1}\sqf(v), e^{-t\norm{v_2}}\Bigr)\not\in I_0^{(3)}\times I_1^{(3)}\quad\text{or} \quad\norm{v_1}> 2Re^t.
\]

By ~\eqref{eq:integral algebraic lemma} in Lemma~\ref{lem: rotating to vertical} applied with $f^+=f_{i,j}^+$, 
for any $v\in\R^4$ with $e^{-t}\norm{v_2}\in I_{i,j}^+$ and $\sqf(v)\in [\alpha,\beta]$, we have 
\begin{multline}\label{eq: lower bd 1}
qe^{2t}\int_0^{2\pi} f^{+}(\darot v)\xi^+(\theta)\diff\!\theta=\\
(1+O(\vare))J_{f_{1}^+}\Bigl(e^{-t}\norm{v_1}\Bigr)\xi^+(\theta_{v_1}) f_{2}^+\Bigl(-q^{-1}\sqf(v), e^{-t}\norm{v_2}\Bigr)\\
+ O\Bigl(\Lip(f_{1}^+)\Lip(\xi^+)e^{-2t}\Bigr).
\end{multline}
In particular,~\eqref{eq: lower bd 1} holds for all $v\in e^t\Omega_{i,j}^+$ with $\sqf(v)\in[\alpha,\beta]$ thanks to part~(2) in Lemma~\ref{lem: partition of unity}. 

\medskip

Before analysing~\eqref{eq: lower bd 1'} and~\eqref{eq: lower bd 1} further, we record the following:
\be\label{eq: third line in eq lower bd 1}
O\Bigl(\Lip(f_{1}^\pm)\Lip(\xi^\pm)e^{-2t}\Bigr)=O(\vare^{-20} e^{-2t})\ll \vare^3,
\ee
so long as $t$ is large enough (recall that the implied constants depend polynomially on $R$).

\medskip

Let us now begin with~\eqref{eq: lower bd 1}. 
In view of~\eqref{eq: Jf1}, 
for any $v=(v_1,v_2)\in e^t\Omega_1$ so that $\alpha\leq Q_0(v)\leq \beta$, we have
\begin{multline}\label{eq: lower bd 2}
J_{f_{1}^+}\Bigl(e^{-t}\norm{v_1}\Bigr)\xi^+(\theta_{v_1}) f_{2}^+\Bigl(-q^{-1}\sqf(v), e^{-t}\norm{v_2}\Bigr)=\\
(\vare+O(\vare^2)) \varrho^+(e^{-t}\norm{v_1})\xi^+(\theta_{v_1})f_{2}^+\Bigl(-q^{-1}\sqf(v), e^{-t}\norm{v_2}\Bigr).
\end{multline}
Moreover, for every $v\in e^t\Omega_{i,j}^+$, satisfying $\alpha\leq Q_0(v)\leq \beta$, 
\[
f_{2}^+\Bigl(-q^{-1}\sqf(v), e^{-t}\norm{v_2}\Bigr)=1, \;\;\xi^+(\theta_{v_1})=1, \;\;\text{and}\;\;  \varrho^+(e^{-t}\norm{v_1})=1;
\] 
from this and~\eqref{eq: lower bd 2}, we conclude that 
\[
J_{f_{1}^+}\Bigl(e^{-t}\norm{v_1}\Bigr)\xi^+(\theta_{v_1}) f_{2}^+\Bigl(-q^{-1}\sqf(v), e^{-t}\norm{v_2}\Bigr)= (\vare+O(\vare^2)). 
\] 
Together with~\eqref{eq: lower bd 1} and~\eqref{eq: third line in eq lower bd 1}, this implies that 
\be\label{eq: upp bd 1}
qe^{2t}\int_0^{2\pi} f^{+}(\darot v)\xi^+(\theta)\diff\!\theta=\vare+O(\vare^2) 
\ee
for every $v\in e^t\Omega_{i,j}^+$ with $\alpha\leq Q_0(v)\leq \beta$. 

Summing~\eqref{eq: upp bd 1}, over all such $v\in\Lambda'_{\rm nz}\setminus(L\cup L')$, we obtain
\be\label{eq: upp bd}
(\vare+O(\vare^2))\cdot N_t'(\Omega_{i,j}^+)\leq 
qe^{2t}\!\!\!\sum_{v\in\Lambda'_{\rm nz}\setminus (L\cup L')}\! \int_0^{2\pi} f^{+}(\darot v)\xi^+(\theta)\diff\!\theta.
\ee
This establishes~\eqref{eq: upp bd phi ij +}. 

Let us now assume $(i,j)\in\mathring{\mathcal I}_{1}^{-}$ and obtain a lower bound for $N_t(\Omega_{i,j})$.
For this, we investigate the term appearing in the second line of~\eqref{eq: lower bd 1'}.

We first claim that 
\[
J_{f_{1}^-}\Bigl(e^{-t}\norm{v_1}\Bigr)\xi^-(\theta_{v_1}) 1_{I_0^{(3)}}(-q^{-1}\sqf(v))1_{I_1^{(3)}}(e^{-t}\norm{v_2})\neq 0,
\]
then $\sqf(v)\in [\alpha, \beta]$ and $v\in e^t\Omega_{i,j}$. 

To see the claim, recall that by part~(1) in Lemma~\ref{lem: partition of unity}, for any $v\in\R^4$,  
\[
J_{f_{1}^-}\Bigl(e^{-t}\norm{v_1}\Bigr)\xi^-(\theta_{v_1}) 1_{I_0^{(3)}}(-q^{-1}\sqf(v))1_{I_1^{(3)}}(e^{-t}\norm{v_2})=0
\]
unless all the following are satisfied 
\begin{subequations}
\begin{align}
\label{eq: partition unity use 1}&\sqf(v)\in [\alpha+50R^5\vare, \beta-50R^5\vare],\\
\label{eq: partition unity use 2}&v_1\in e^{t}\supp(\varphi_{i,j}^-),\quad\text{and}\\   
\label{eq: partition unity use 3}&v\in e^t\Omega_{i,j}.
\end{align}
\end{subequations}
in deducing~\eqref{eq: partition unity use 1} from Lemma~\ref{lem: partition of unity}, we used the definitions 
\[
I_0^{(3)}=(I_{0}^-)_{30R^3\vare}\quad\text{and}\quad I_{0}^-=\Bigl([-q^{-1}\beta, -q^{-1}\alpha]\Bigr)_{-100R^5\vare}.
\]

We conclude from~\eqref{eq: partition unity use 1} that $\sqf(v)\in [\alpha, \beta]$. Using the definition of $\Omega_{i,j}$ in~\eqref{eq: def Omega ij 2} and since $2R^3e^{-2t}<\vare$, ~\eqref{eq: partition unity use 3} implies that $v\in e^{t}\Omega_{i,j}$, and completes the proof of the claim.

\medskip

We now return to the proof of the lemma. Recall that 
\begin{multline*}
J_{f_{1}^-}(e^{-t}\norm{v_1})\xi^-(\theta_{v_1})1_{I_0^{(3)}}(-q^{-1}\sqf(v))1_{I_1^{(3)}}(e^{-t}\norm{v_2})\leq \\J_{f_{1}^-}(e^{-t}\norm{v_1})
=(\vare+O(\vare^2)) \varrho^-(e^{-t}\norm{v_1})\leq \vare+O(\vare^2).
\end{multline*}
This and the above claim imply that 
\begin{multline}\label{eq: lower bd 2'} 
\sum_{v\in \Lambda'_{\rm nz}\setminus(L\cup L')} J_{f_{1}^-}(e^{-t}\norm{v_1})\xi^-(\theta_{v_1})1_{I_0^{(3)}}(-q^{-1}\sqf(v))1_{I_1^{(3)}}(e^{-t}\norm{v_2})\\\leq 
(\vare+O(\vare^2))\cdot N_t'(\Omega_{i,j}).
\end{multline}
Moreover, since $\Bigl(-q^{-1}\sqf(v), e^{-t\norm{v_2}}\Bigr)\not\in I_0^{(3)}\times I_1^{(3)}$ or $\norm{v_1}> 2Re^t$ imply $\mathcal E=0$. We conclude from Lemma~\ref{lem: gen upp bd} applied with $\eta=\eta'/10$ imply that
\[
\sum_{v\in\Lambda'} \mathcal E\ll \vare^{-20}e^{-2t} e^{(2+\eta)t}\ll \vare^{-21};
\]  
we used $\Lip(f_{1}^-)\Lip(\xi^-)e^{-2t}\ll\vare^{-20}e^{-2t}$, see~\eqref{eq: third line in eq lower bd 1}, 
and $\vare=e^{-\eta' t}$. 
This,~\eqref{eq: lower bd 2'} and~\eqref{eq: lower bd 1'} imply that
\begin{multline*}
qe^{2t}\sum_{v\in\Lambda'_{\rm nz}\setminus L\cup L'}\int_0^{2\pi} f^{-}(\darot v)\xi(\theta)\diff\!\theta +  O(\vare^{-21})\leq \\
(\vare+O(\vare^2))\cdot N_t'(\Omega_{i,j}),
\end{multline*} 
as we claimed in~\eqref{eq: upp and lower bd phi ij}. 
\end{proof}

We will use Theorem~\ref{thm: equi of f hat} to reduce both~\eqref{eq: upp and lower bd phi ij} and~\eqref{eq: upp bd phi ij +} 
to the study of $\int_X\hat f_{i,j}^\pm\diff\!m_X$, see~\eqref{eq: def hat f}. Let us begin with computing this integral.

\begin{lemma}\label{lem: int f hat over X}
For ${\boldsymbol\sigma}=\pm$ let $f_{i,j}^{{\boldsymbol\sigma}}=f_1^{\boldsymbol\sigma} f_2^{\boldsymbol\sigma}$, where for $k=1,2$, $f_k^{\boldsymbol\sigma}$ is as in~\S\ref{sec: counting and circular}.
There is an absolute constant $c_\Lambda$ so that
\be\label{eq: circ int 2}
q\int_X\hat f_{i,j}^{{\boldsymbol\sigma}}\diff\!m_X= c_\Lambda\vare (\beta-\alpha)\absolute{I_{i,j}^{\boldsymbol\sigma}}\int\varrho_j^{\boldsymbol\sigma}+O(\vare^2)(\beta-\alpha)\absolute{I_{i,j}^{\boldsymbol\sigma}}\int\varrho_j^{\boldsymbol\sigma}. 
\ee
\end{lemma}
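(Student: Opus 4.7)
The strategy is first to establish a Siegel-type mean value identity for $X$ and $\Lambda$, namely
\[
\int_X \hat f \, \diff\!m_X = c_\Lambda \int_{\R^2} f_1 \cdot \int_{\R^2} f_2
\]
for all $f_1,f_2 \in L^1(\R^2)$, where $c_\Lambda > 0$ depends only on $\Lambda$ and $\Gamma'$. Given this identity, the lemma reduces to explicitly computing the two one-variable integrals from the sandwich bounds~\eqref{eq: def f1} and~\eqref{eq: def f2}.

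To prove the mean-value identity, set $\Gamma_0 = \SL_2(\Z)\times \SL_2(\Z)$ and $Y = G/\Gamma_0$. The classical Siegel mean value theorem on $\SL_2(\R)/\SL_2(\Z)$, applied factor by factor and combined with Fubini, gives
\[
\int_Y \sum_{(w_1,w_2) \in g\cdot ((\Z^2\setminus 0)\times(\Z^2\setminus 0))} f_1(w_1)f_2(w_2)\, \diff\!m_Y = \int_{\R^2} f_1\cdot \int_{\R^2} f_2.
\]
The natural projection $\pi:X\to Y$ is a finite cover of degree $n = [\Gamma_0:\Gamma']$ with $\pi_*m_X = m_Y$. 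Since $\Gamma_0$ acts on $\Z^4/\Lambda\cong (\Z/2)^2$ with stabilizer of the trivial coset equal to $\Gamma'$, partitioning $(\Z^2\setminus 0)^2$ into the $\Gamma_0$-orbit of $\Lambda_{\rm nz}$ and unfolding produces the identity, with $c_\Lambda$ explicitly determined by $n$ and by $[\Z^4:\Lambda]=4$.

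For the second step I compute the one-variable integrals. The sandwich~\eqref{eq: def f1} gives $\int_\R f_1^{\boldsymbol\sigma}(\cox,\coy)\diff\!\cox = (\vare + O(\vare^2))\varrho_j^{\boldsymbol\sigma}(\coy)$ uniformly in $\coy$, so $\int_{\R^2} f_1^{\boldsymbol\sigma} = (\vare+O(\vare^2))\int_\R \varrho_j^{\boldsymbol\sigma}$. The sandwich~\eqref{eq: def f2} gives $\int_{\R^2} f_2^{\boldsymbol\sigma} = 2|I_0^{\boldsymbol\sigma}|\cdot |I_{i,j}^{\boldsymbol\sigma}| + O(R^3\vare)$, and the definitions $I_0^+=[-q^{-1}\beta,-q^{-1}\alpha]$ and $I_0^-=(I_0^+)_{-100R^5\vare}$ give $q|I_0^{\boldsymbol\sigma}| = (\beta-\alpha)+O(R^5\vare)$.

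Multiplying these two expressions and using the standing hypothesis $\beta-\alpha \geq \vare$ (so that $R^k\vare/(\beta-\alpha) = O(\vare)$ once $\vare$ is small relative to $R$) together with the stated polynomial dependence of implicit constants on $R$, the leading term is $2c_\Lambda\vare(\beta-\alpha)|I_{i,j}^{\boldsymbol\sigma}|\int\varrho_j^{\boldsymbol\sigma}$ and the accumulated errors collapse to $O(\vare^2)(\beta-\alpha)|I_{i,j}^{\boldsymbol\sigma}|\int\varrho_j^{\boldsymbol\sigma}$; redefining $c_\Lambda$ to absorb the factor $2$ yields the claim. The main subtlety lies in establishing the mean-value identity cleanly: each ingredient is classical, but care is required in tracking how the congruence sublattice $\Lambda\subset \Z^4$ and its stabilizer $\Gamma'\subset \Gamma_0$ fit into the product Siegel formalism, and it is this bookkeeping that fixes the constant $c_\Lambda$.
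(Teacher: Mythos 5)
Your proposal follows essentially the same route as the paper's proof: a Siegel--type mean value identity $\int_X\hat f\,\diff\!m_X=c_\Lambda\int_{\R^2}f_1\int_{\R^2}f_2$, followed by the elementary computation of the two planar integrals coming from~\eqref{eq: def f1} and~\eqref{eq: def f2}. The paper simply invokes this identity (the constant $c_\Lambda$ appears with no further comment), whereas you sketch a proof by combining the $\SL_2$ Siegel formula on each factor with unfolding over the finite cover $X\to G/\Gamma_0$; that is a reasonable supplement, though your bookkeeping phrase is not literally right: the $\Gamma_0$-translates of $\Lambda_{\rm nz}$ do not partition $(\Z^2\setminus 0)^2$ (all translates of $\Lambda$ contain $2\Z^4$, and some congruence classes mod $2$ are not covered). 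The clean statement is that $\Lambda_{\rm nz}$ breaks into countably many $\Gamma'$-orbits, each of which unfolds to an absolute constant times $\int f_1\int f_2$, and the sum of these constants converges; only the existence of an absolute $c_\Lambda$ is needed, not its value.

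The one step to correct is the error collapse. From the sandwiches one gets $q\int_{\R^2} f_2^{\boldsymbol\sigma}=2(\beta-\alpha)\absolute{I_{i,j}^{\boldsymbol\sigma}}+O_R(\vare)$, and after multiplying by $(\vare+O(\vare^2))\int\varrho_j^{\boldsymbol\sigma}$ the additive boundary term contributes $O_R\bigl(\vare^2\int\varrho_j^{\boldsymbol\sigma}\bigr)$; absorbing this into $O(\vare^2)(\beta-\alpha)\absolute{I_{i,j}^{\boldsymbol\sigma}}\int\varrho_j^{\boldsymbol\sigma}$ requires $(\beta-\alpha)\absolute{I_{i,j}^{\boldsymbol\sigma}}\gg_R 1$. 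Your parenthetical justification, that $\beta-\alpha\geq\vare$ gives $R^k\vare/(\beta-\alpha)=O(\vare)$, is false as stated: it only gives $O_R(1)$, and with $\beta-\alpha$ as small as $\vare$ and $\absolute{I_{i,j}^{\boldsymbol\sigma}}$ as small as $3\sqrt\vare$ the additive error need not be dominated by the multiplicative one. In fairness, the paper's own proof glosses the same point by asserting $\int f_2^{\boldsymbol\sigma}=q^{-1}(\beta-\alpha)\absolute{I_{i,j}^{\boldsymbol\sigma}}+O\bigl(q^{-1}\vare(\beta-\alpha)\absolute{I_{i,j}^{\boldsymbol\sigma}}\bigr)$ outright, so your computation matches the paper in substance; but you should either record the needed domination of the $O_R(\vare)$ smoothing terms by $\vare(\beta-\alpha)\absolute{I_{i,j}^{\boldsymbol\sigma}}$ as a hypothesis, or keep the error in additive form, rather than deduce it from $\beta-\alpha\geq\vare$.
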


\begin{proof}
We have 
\begin{align*}
\int_X\hat f_{i,j}^{{\boldsymbol\sigma}}\diff\!m_X&= c_\Lambda\int_{\R^2} f_1^{\boldsymbol\sigma}\int_{\R^2}f_2^{\boldsymbol\sigma}\\
&= c_\Lambda \vare \int_{\R}\varrho_j^{\boldsymbol\sigma}\int_{\R^2} f_2^{\boldsymbol\sigma}+ O(\vare^2) \int_{\R}\varrho_j^{\boldsymbol\sigma}\int_{\R^2} f_2^{\boldsymbol\sigma}
\end{align*}
where $c_\Lambda$ is absolute and the implied constants depend only on $R$.

Since $f_2$ is defined as in~\eqref{eq: def f2 proof}, we conclude that 
\[
\int f_2^{\boldsymbol\sigma}= q^{-1}(\beta-\alpha)|I_{i,j}^{\boldsymbol\sigma}|+ O\Bigl(q^{-1}\vare(\beta-\alpha)|I_{i,j}^{\boldsymbol\sigma}|\Bigr)
\]
again the implied constant depends only on $R$. The lemma follows. 
\end{proof}

\begin{lemma}\label{lem: upp and lower bd 2}
Let the notation be as in Lemma~\ref{lem: int f hat over X}. In particular, 
\[
f_{i,j}^{\pm}=f_1^\pm f_2^\pm,
\] 
where $f_k^\pm$ are as in~\S\ref{sec: counting and circular}. Also put 
\[
\Upsilon_{i,j}^\pm=c_\Lambda (\beta-\alpha)\absolute{I_{i,j}^\pm}\int \xi_i^\pm\int\varrho_j^\pm.
\] 

If $(i,j)\in\mathring{\mathcal I}_1^-$, then 
\be\label{eq: upp and lower bd phi ij 2}
e^{2t}\Bigl(\Upsilon_{i,j}^- + O\Bigl(\Sob(f_{i,j}^-)\Sob(\xi_i^-)e^{-\delta_2 t}\Bigr)\Bigr)\leq 
(1+O(\vare))\cdot N_t'(\Omega_{i,j})
\ee
Moreover, for every $(i,j)\in\mathcal I_1^+$, we have 
\be\label{eq: upp bd phi ij + 2}
(1+O(\vare))\cdot N_t'(\Omega_{i,j}^+)\leq e^{2t}\Bigl(\Upsilon_{i,j}^++O\Bigl(\Sob(f_{i,j}^+)\Sob(\xi_i^+)e^{-\delta_2 t}\Bigr)\Bigr).
\ee 
where the implied constants depends polynomially on $R$. 
\end{lemma}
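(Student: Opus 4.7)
The plan is to express the left-hand sides of Lemma~\ref{lem: upp and lower bd} as circular integrals of $\hat f_{i,j}^{\pm}$ with the $L\cup L'$-contribution removed, apply Theorem~\ref{thm: equi of f hat} to the full integral, and compute the main term via Lemma~\ref{lem: int f hat over X}.

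First I would use the definition of $\hat f$ in~\eqref{eq: def hat f} to write
\[
\sum_{v\in\Lambda'_{\rm nz}\setminus(L\cup L')}\int_0^{2\pi} f_{i,j}^{\pm}(\darot v)\xi_i^{\pm}(\theta)\diff\!\theta = \int_0^{2\pi}\bigl(\hat f_{i,j}^{\pm}-\hat f_{i,j,\rm sp}^{\pm}\bigr)(\darot g\Gamma')\xi_i^{\pm}(\theta)\diff\!\theta,
\]
where $\hat f_{i,j,\rm sp}^{\pm}(\theta):=\sum_{v\in g\Lambda_{\rm nz}\cap(L\cup L')} f_{i,j}^{\pm}(\darot v)$. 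Since $g$ satisfies~\eqref{eq: Diophantine main them Q0'}, Theorem~\ref{thm: equi of f hat} applies with $\bar A=10^3A$ in place of $A$ and $\rho=\delta_1/\bar A$; Lemma~\ref{lem: exceptional subspaces t-ell} at $s=t$ forces any $(\delta_1/\bar A,\bar A,t)$-exceptional subspace it produces to be $L$ or $L'$. Thus the theorem's $\mathcal M$-term equals $\int_{\mathcal C_{i,j}^{\pm}}\hat f_{i,j,\rm sp}^{\pm}\xi_i^{\pm}\diff\!\theta$ with $\mathcal C_{i,j}^{\pm}=\{\theta:\hat f_{i,j,\rm sp}^{\pm}(\theta)\ge e^{\delta_1 t}\}$; if no exceptional subspace exists, $\hat f_{i,j,\rm sp}^{\pm}\equiv 0$ and the identity holds trivially.

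Subtracting $\int_0^{2\pi}\hat f_{i,j,\rm sp}^{\pm}\xi_i^{\pm}\diff\!\theta$ from the equidistribution identity leaves the residue $-\int_{[0,2\pi]\setminus\mathcal C_{i,j}^{\pm}}\hat f_{i,j,\rm sp}^{\pm}\xi_i^{\pm}\diff\!\theta$. On this complementary set one has $\hat f_{i,j,L}^{\pm}\le\hat f_{i,j,\rm sp}^{\pm}<e^{\delta_1 t}$ and similarly for $L'$, and each $\hat f_{i,j,L}^{\pm}$ is pointwise dominated by the characteristic-function version of $\hat f_L$ appearing in Lemma~\ref{lem: gen bd for int hat f}; applying that lemma with $\eta=\delta_1$ bounds the residue by $O(e^{(-1+\delta_1)t})$, which is comfortably swallowed by the $\Sob(f_{i,j}^{\pm})\Sob(\xi_i^{\pm})e^{-\delta_2 t}$ error. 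Invoking Lemma~\ref{lem: int f hat over X} to evaluate $q\int\xi_i^{\pm}\int_X\hat f_{i,j}^{\pm}\diff\!m_X$ as $\vare\Upsilon_{i,j}^{\pm}(1+O(\vare))$ and multiplying the displayed identity by $qe^{2t}$ yields
\[
qe^{2t}\!\!\!\sum_{v\in\Lambda'_{\rm nz}\setminus(L\cup L')}\int_0^{2\pi}\!\! f_{i,j}^{\pm}(\darot v)\xi_i^{\pm}\diff\!\theta = e^{2t}\vare\Upsilon_{i,j}^{\pm}(1+O(\vare)) + O\bigl(e^{2t}\Sob(f_{i,j}^{\pm})\Sob(\xi_i^{\pm})e^{-\delta_2 t}\bigr).
\]

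Substituting this into~\eqref{eq: upp and lower bd phi ij} and~\eqref{eq: upp bd phi ij +} and dividing through by $\vare+O(\vare^2)$ then produces~\eqref{eq: upp and lower bd phi ij 2} and~\eqref{eq: upp bd phi ij + 2}; the leftover $O(\vare^{-21})$ term from~\eqref{eq: upp and lower bd phi ij} and the additional factor $\vare^{-1}$ arising from the division are absorbed into the Sobolev error, since $\Sob(f_{i,j}^{\pm})$ and $\Sob(\xi_i^{\pm})$ already carry polynomial $\vare^{-1}$ factors and $\vare=e^{-\eta' t}$ with $\eta'$ chosen small enough that $\eta'\cdot(\text{polynomial degree})<\delta_2$. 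The main obstacle beyond bookkeeping is matching the truncated-at-threshold $\mathcal M$-term from Theorem~\ref{thm: equi of f hat} against the full $\int\hat f_{i,j,\rm sp}^{\pm}\xi_i^{\pm}\diff\!\theta$; this is precisely the content of the second estimate in Lemma~\ref{lem: gen bd for int hat f}, and is the reason that lemma was stated in the sharper form (integrating only the sub-threshold part of $\hat f_L$) rather than as a plain $L^\infty$ bound.
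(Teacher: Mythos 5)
Your proposal is correct and follows essentially the same route as the paper's proof: it rewrites the restricted sum as $\int(\hat f^{\pm}-\hat f^{\pm}_{\rm sp})\xi_i^{\pm}\,\diff\theta$ (the paper's $\hat f_{\rm mod}$ device), invokes Theorem~\ref{thm: equi of f hat} under~\eqref{eq: Diophantine main them Q0'} with Lemma~\ref{lem: exceptional subspaces t-ell} identifying the exceptional subspaces with $L,L'$, controls the sub-threshold residue via part~(2) of Lemma~\ref{lem: gen bd for int hat f}, evaluates the main term by Lemma~\ref{lem: int f hat over X}, and substitutes into Lemma~\ref{lem: upp and lower bd}. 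The only differences are cosmetic (subtracting all of $\hat f_{\rm sp}^{\pm}$ rather than only its super-threshold part, and absorbing the $O(\vare^{-21})$ and the $\vare^{-1}$ from the division into the Sobolev error), which matches the paper's bookkeeping.
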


\begin{proof}
We will prove the lemma using Lemma~\ref{lem: upp and lower bd} and Theorem~\ref{thm: equi of f hat}. Let us begin with restating the main conclusion of Theorem~\ref{thm: equi of f hat} in the form which will be used here. When there is no confusion, we drop $i,j$ from the notation and denote $f_{i,j}^\pm$ by $f^\pm$, $\xi_i^\pm$ by $\xi^\pm$, etc. 

Recall that $\Lambda'=g\Lambda$ where $g=(g_1, g_2)$ is as in~\eqref{eq: def g}. 
Let $L$ and $L'$ be as in Lemma~\ref{lem: exceptional subspaces t-ell} if they exist. For ${\boldsymbol\sigma}=\pm$, put 
\begin{align*}
\hat f_{{\rm sp}}^{\boldsymbol\sigma}(\theta)&=\!\!\!\sum_{v\in \Lambda' \cap (L\cup L')} f_{{\rm sp}}^{\boldsymbol\sigma}(\Delta(a_t r_\theta)v)\\
\mathcal C_{\boldsymbol\sigma}&=\left\{\theta\in[0,2\pi]: \hat f_{{\rm sp}}^{\boldsymbol\sigma}(\theta)\geq e^{\delta_1 t}\right\},
\end{align*}
and define 
\[
\hat f^{{\boldsymbol\sigma}}_{\rm mod}(\theta)=\begin{cases}\hat f^{{\boldsymbol\sigma}}(\theta)-\hat f_{{\rm sp}}^{\boldsymbol\sigma}(\theta)&\text{$\theta\in\mathcal C_{\boldsymbol\sigma}$}\\
\hat f^{{\boldsymbol\sigma}}(\theta) &\text{otherwise}\end{cases}
\]
where we write $\hat f^{{\boldsymbol\sigma}}(\theta)=\hat f^{{\boldsymbol\sigma}}(\darot g\Gamma')$. 

Since $g$ satisfies~\eqref{eq: Diophantine main them Q0'},
Theorem~\ref{thm: equi of f hat} and the definition of $\hat f^{{\boldsymbol\sigma}}_{\rm mod}(\theta)$ imply
\be\label{eq: circ int 1 1}
\int_0^{2\pi}\!\! \hat f^{{\boldsymbol\sigma}}_{\rm mod}(\theta)\xi^{\boldsymbol\sigma}(\theta)\diff\!\theta \!=\!\!
\int\xi^{\boldsymbol\sigma}\!\diff\!\theta\!\!\int_X\hat f^{{\boldsymbol\sigma}}\!\diff\!m_X+O\Bigl(\Sob(f^{{\boldsymbol\sigma}})\Sob(\xi^{\boldsymbol\sigma})e^{-\delta_2 t}\Bigr).
\ee

With this established, we first show~\eqref{eq: upp and lower bd phi ij 2}. Let $\boldsymbol\sigma=-$. 
Assuming $\eta'$ in the definition of $\vare=e^{-\eta't}$ is small enough, we have  
\[
O(\Sob(f^{-})\Sob(\xi^{-})e^{-\delta_2 t})< \vare^4(\beta-\alpha).
\]
Recall from \S\ref{sec: partition of unity} that $\int\varrho_j^{-}\geq \vare$ and that $\absolute{I_{i,j}^{-}}\geq \sqrt\vare$. 
Thus~\eqref{eq: circ int 1 1}, together with the above and Lemma~\ref{lem: int f hat over X}, implies that
\be\label{eq: circ int 1 2}
\int_0^{2\pi}\!\! \hat f^{-}_{\rm mod}(\theta)\xi^-(\theta)\diff\!\theta \gg \vare^3(\beta-\alpha).
\ee
Moreover, by part~(2) in Lemma~\ref{lem: gen bd for int hat f} applied with $\delta_1$, $L$, and $L'$, we have 
\be\label{eq: not cusp of sp subspaces}
\int_{[0,2\pi]\setminus\mathcal C_-}\hat f_{{\rm sp}}^-(\theta)\diff\!\theta\ll e^{(-1+\delta_1)t}
\ee
Recall that $\delta_1<1/100$, hence, if $\eta'<1/100$, then $e^{(-1+\delta_1)t}<\vare^4(\beta-\alpha)$. 
Thus, we get from~\eqref{eq: circ int 1 2} and~\eqref{eq: not cusp of sp subspaces}  
\be\label{eq: circ int 1 2 3}
\begin{aligned}
&\sum_{v\in\Lambda'_{\rm nz}\setminus (L\cup L')}\int_0^{2\pi} f^-(\darot v)\xi^-(\theta)\diff\!\theta\\
&=\int_0^{2\pi}\!\! \hat f^-_{\rm mod}(\theta)\xi^-(\theta)\diff\!\theta - \int_{[0,2\pi]\setminus\mathcal C_-}\hat f_{\rm sp}^-(\theta)\xi^-_i(\theta)\diff\!\theta \\
&=(1+O(\vare))\int_0^{2\pi}\!\! \hat f^-_{\rm mod}(\theta)\xi^-(\theta)\diff\!\theta.
\end{aligned}
\ee

In view of~\eqref{eq: upp and lower bd phi ij} in Lemma~\ref{lem: upp and lower bd}, 
\begin{multline*}
qe^{2t}\sum_{v\in\Lambda'_{\rm nz}\setminus (L\cup L')}\int_0^{2\pi} f^-(\darot v)\xi^-(\theta)\diff\!\theta +  O(\vare^{-21})\leq \\
(\vare+O(\vare^2))\cdot N_t'(\Omega_{i,j})
\end{multline*}
Using this and~\eqref{eq: circ int 1 2 3} (multiplied by $qe^{2t}$), we conclude 
\[
qe^{2t} (1+O(\vare))\int_0^{2\pi}\!\! \hat f^{-}_{\rm mod}(\theta)\xi^-(\theta)\diff\!\theta +  O(\vare^{-21})\leq 
(\vare+O(\vare^2))\cdot N_t'(\Omega_{i,j}).
\]
This,~\eqref{eq: circ int 1 1} and~\eqref{eq: circ int 2} yield, 
\be\label{eq: upp boud lemma used'} e^{2t}(
\Upsilon_{i,j}^-+ O(\Sob(f^{-})\Sob(\xi^-)e^{-\delta_2 t}) +O(\vare^{-21})\leq (1+O(\vare))\cdot N_t'(\Omega_{i,j}).
\ee
Assuming $\eta'$ is small enough and $t$ large, we have  
\[
\vare^{-23}<e^{2t}\cdot \biggl(c_\Lambda (\beta-\alpha)\absolute{I_{i,j, -}}\int_{\R}\varrho_j^-\int_\R\xi_i^-\biggr).
\]
Hence,~\eqref{eq: upp and lower bd phi ij 2} follows from~\eqref{eq: upp boud lemma used'}.

We now show~\eqref{eq: upp bd phi ij + 2}; the argument is similar and simpler. By~\eqref{eq: upp bd phi ij +},   
\begin{multline}\label{eq: upp bd phi ij + use}
(\vare+O(\vare^2))\cdot N_t'(\Omega_{i,j}^+)\leq 
qe^{2t}\!\!\!\sum_{v\in\Lambda'_{\rm nz}\setminus(L\cup L')}\int_0^{2\pi}\!\! f^+(\darot v)\xi^+(\theta)\diff\!\theta\\
\leq qe^{2t}\int_0^{2\pi}\!\! \hat f^+_{\rm mod}(\theta)\xi^+(\theta)\diff\!\theta.
\end{multline} 
Thus,~\eqref{eq: upp bd phi ij + 2} follows 
from~\eqref{eq: upp bd phi ij + use},~\eqref{eq: circ int 1 1} and~\eqref{eq: circ int 2}, applied with $\boldsymbol\sigma=+$.
\end{proof}


\begin{lemma}\label{lem: main counting 1}
There exists $\eta$ depending on $\eta'$ and some $\bar C_1$ so that 
\be\label{eq: final 1}
N_t(\Omega\setminus \ball)=\\
\bar C_1(\beta-\alpha)e^{2t}+\mathcal M_0+ O\Bigl((1+\absolute{\alpha}+\absolute{\beta})^N e^{(2-2\eta)t}\Bigr)
\ee
where $N$ is absolute, the implied constants depend on $R$ and 
\[
\mathcal M_0=\#\{v\in \Lambda'_{\rm nz}\cap (L\cup L')\cap e^t(\Omega\setminus \ball): \alpha\leq \sqf(v)\leq \beta\}.
\]

Similar assertion holds with $\Omega\setminus\ball$ replaced by $\ball\setminus e^{-1}\ball$.
\end{lemma}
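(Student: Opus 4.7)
The plan is to combine the two-sided bounds on $N_t'(\Omega_{i,j}^\pm)$ furnished by Lemma~\ref{lem: upp and lower bd 2} with the smooth cell decomposition of \S\ref{sec: partition of unity} applied to $\Omega_1$ and $\Omega_2$, and then to add back the contribution from $L \cup L'$ as the separate term $\mathcal M_0$. Since $\Omega \setminus \ball = \Omega_1 \sqcup \Omega_2$ and the two pieces are symmetric under switching $v_1 \leftrightarrow v_2$, it suffices to treat $\Omega_1$. Writing $N_t(\Omega_1) = N_t'(\Omega_1) + \mathcal M_0^{(1)}$, where $\mathcal M_0^{(1)}$ denotes the contribution of $\Lambda'_{\rm nz} \cap (L \cup L') \cap e^t\Omega_1$ with $\alpha \le \sqf(v) \le \beta$, the task reduces to estimating $N_t'(\Omega_1)$.

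For the upper bound I combine the majorization $1_{\Omega_1} \leq \sum_{\mathcal I_1^+} 1_{\Omega_{i,j}^+}$ from~\eqref{eq: Omega ij 1} with~\eqref{eq: upp bd phi ij + 2}, and for the lower bound the disjoint minorization $\sum_{\mathring{\mathcal I}_1^-} 1_{\Omega_{i,j}} \leq 1_{\Omega_1}$ from~\eqref{eq: Omega ij 2} with~\eqref{eq: upp and lower bd phi ij 2}. Since $|\mathcal I_1^+|,|\mathring{\mathcal I}_1^-| = O(\vare^{-2})$ and all Sobolev norms appearing in the error terms are $O(\vare^{-O(1)})$, with $\vare = e^{-\eta' t}$ the accumulated equidistribution error is $O(\vare^{-O(1)} e^{(2-\delta_2) t})$; choosing $\eta' > 0$ small relative to $\delta_2$ renders this $O(e^{(2-2\eta)t})$ for some $\eta > 0$ depending on $\eta'$ and $\delta_2$.

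The heart of the argument, and the main obstacle, is to verify that both Riemann-type sums $\sum_{\mathcal I_1^+} \Upsilon_{i,j}^+$ and $\sum_{\mathring{\mathcal I}_1^-} \Upsilon_{i,j}^-$ equal $c_\Lambda \cdot \vol(\Omega_1)(\beta-\alpha)$ up to error $O((1+|\alpha|+|\beta|)^N (\beta-\alpha)\sqrt{\vare})$. Each summand $|I_{i,j}^\pm| \int \xi_i^\pm \int \varrho_j^\pm$ approximates, with relative error $O(\vare)$, the area of the slice of $\Omega_1$ above the $v_1$-cell $\supp(\varphi_{i,j}^\pm)$, using that the admissible $v_2$-set above $v_1 \in \pi_1(\Omega_1)$ is a segment of length $2\sqrt{4-\|v_1\|^2}$, well approximated by $|I_{i,j}^\pm|$ on typical cells. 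Both sums therefore telescope to the same main term, up to a discrepancy coming from the symmetric difference $\mathcal I_1^+ \setminus \mathring{\mathcal I}_1^-$, which consists of non-typical cells (where $\sqrt{4-\|v_1\|^2} \lesssim \sqrt{\vare}$) and cells crossing $\partial(\pi_1(\Omega_1))$. Their combined $\Upsilon$-measure is $O(\sqrt{\vare})$, producing an error that is absorbable into $O(e^{(2-\eta)t})$ after shrinking $\eta$. Note that Lemma~\ref{lem: upp and lower bd 2} only provides a lower bound on cells in $\mathring{\mathcal I}_1^-$; for the non-typical boundary cells one must invoke the upper bound~\eqref{eq: upp bd phi ij + 2} in a complementary way, exploiting that their $\Upsilon$-measure is small.

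Combining the contribution from $\Omega_1$, the symmetric one from $\Omega_2$, and adding back $\mathcal M_0$ yields~\eqref{eq: final 1} with $\bar C_1 = c_\Lambda(\vol(\Omega_1) + \vol(\Omega_2))$, which is absolute. The analogous statement for $\ball \setminus e^{-1}\ball$ follows by running the same scheme on this annulus, which amounts to choosing the $\varrho_j^\pm$ to partition $[e^{-1},1]$ instead of a single window near $q \approx 2$; the remainder of the argument carries over verbatim.
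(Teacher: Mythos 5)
Your proposal is correct and follows essentially the same route as the paper: sandwiching $N_t'(\Omega_k)$ between the sum of the lower bounds~\eqref{eq: upp and lower bd phi ij 2} over $\mathring{\mathcal I}_k^-$ and the sum of the upper bounds~\eqref{eq: upp bd phi ij + 2} over $\mathcal I_k^+$, matching the two $\Upsilon$-sums as Riemann sums up to a small discrepancy from non-typical and boundary cells, absorbing the accumulated Sobolev errors by taking $\eta'$ small relative to $\delta_2$, and adding back $\mathcal M_0$; your elaboration of the Riemann-sum step is just a fleshed-out version of what the paper asserts via \ref{xi-1}, \ref{rho-1} and~\eqref{eq: def I ij -}. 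The only quibble is the closed-form identification $\bar C_1=c_\Lambda(\vol(\Omega_1)+\vol(\Omega_2))$: the sums actually approximate a weighted measure (cell measure in the $(\theta,r)$-coordinates for $v_1$ times the slice length $\sqrt{4-\norm{v_1}^2}$), not the Euclidean $4$-volume, but since the lemma only asserts the existence of some absolute $\bar C_1$ this is immaterial.
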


\begin{proof}
We will prove the assertion for $\Omega\setminus \ball$, the proof for $\ball\setminus e^{-1}\ball$ is similar. 

Recall that $\Omega\setminus \ball=\Omega_1\cup \Omega_2$ where 
\begin{align*}
    \Omega_1&=\Bigl\{(v_1, v_2)\in\Omega: \norm{v_1}> 1\Bigl\}\quad\text{and}\\
    \Omega_2&=\Bigl\{(v_1, v_2)\in\Omega: \norm{v_1}\leq 1, \norm{v_2}>1\Bigl\}.
\end{align*}

Fix $k=1$ or $2$. By~\eqref{eq: upp and lower bd phi ij 2}, for all $(i,j)\in\mathring{\mathcal I}_k^{-}$,  
\begin{multline}\label{eq: upp and lower bd phi ij 3} 
e^{2t}\Bigl(\Upsilon_{i,j}^-+ O\Bigl(\Sob(f_{i,j}^{{\boldsymbol\sigma}})\Sob(\xi_i^+)e^{-\delta_2 t}\Bigr)\Bigr)\leq \\
(1+O(\vare))\cdot N_t'(\Omega_{i,j})\leq(1+O(\vare))\cdot N_t(\Omega_{i,j}^+),
\end{multline}
where we used $\Omega_{i,j}\subset\Omega_{i,j}^+$ in the second inequality,~\eqref{eq: def Omega ij 2}. 

Also by~\eqref{eq: upp bd phi ij + 2}, for all $\varphi_{i, j}^+\in\mathcal I_k^+$, we have  
\be\label{eq: upp bd phi ij + 3}
(1+O(\vare))\cdot N_t'(\Omega_{i,j}^+)\leq e^{2t}\Bigl(\Upsilon_{i,j}^++O\Bigl(\Sob(f_{i,j}^{+})\Sob(\xi_i^+)e^{-\delta_2 t}\Bigr)\Bigr).
\ee

Thus summing~\eqref{eq: upp and lower bd phi ij 3} over all 
$(i,j)\in\mathring{\mathcal I}_k^{-}$,  
\begin{multline}\label{eq: upp and lower bd phi ij 4} 
e^{2t}\sum_{\mathring{\mathcal I}_k^{-}}\Bigl(\Upsilon_{i,j}^-+ O\Bigl(\Sob(f_{i,j}^{+})\Sob(\xi_i^+)e^{-\delta_2 t}\Bigr)\Bigr)\leq\\
(1+O(\vare))\sum_{\mathring{\mathcal I}_k^{-}} N_t'(\Omega_{i,j})\leq(1+O(\vare))\sum_{\mathring{\mathcal I}_k^{-}} N_t'(\Omega_{i,j}^+)
\end{multline}
Moreover, summing~\eqref{eq: upp bd phi ij + 3} over all $(i,j)\in\mathcal I_k^+$, we get the following:  
\begin{multline}\label{eq: upp and lower bd phi ij 5} 
(1+O(\vare))\sum_{\mathring{\mathcal I}_k^{-}} N_t'(\Omega_{i,j}^+)\leq 
(1+O(\vare))\sum_{\mathcal I_k^+} N_t'(\Omega_{i,j}^+)\leq \\
e^{2t}\sum_{\mathcal I_k^+}\Bigl(\Upsilon_{i,j}^++O\Bigl(\Sob(f_{i,j}^{+})\Sob(\xi_i^+)e^{-\delta_2 t}\Bigr). 
\end{multline}

By~\eqref{eq: Omega ij 1} and~\eqref{eq: Omega ij 2}, $\Omega_{i,j}\subset \Omega_k$ are disjoint and 
$\Omega_k\subset \bigcup_{\mathcal I_k^+}\Omega_{i,j}^+$. Hence,~\eqref{eq: upp and lower bd phi ij 4} implies that 
\be\label{eq: upp and lower bd phi ij 6}
(I)\leq (1+O(\vare))N_t'(\Omega_k)\leq (II).
\ee
where $(I)$ is the first line in~\eqref{eq: upp and lower bd phi ij 4} and $(II)$ is the last line in~\eqref{eq: upp and lower bd phi ij 5}. 

Recall from Lemma~\ref{lem: upp and lower bd 2} that 
\[
\Upsilon_{i,j}^\pm=c_\Lambda (\beta-\alpha)\absolute{I_{i,j, \pm}}\int_\R \xi_i^\pm\int_\R\varrho_j^\pm
\] 
in view of~\ref{xi-1},~\ref{rho-1}, and~\eqref{eq: def I ij -}, the above implies that  
\[
\sum_{\mathcal I_k^+}\Upsilon_{i,j}^+= (1+O(\vare))\sum_{\mathring{\mathcal I}_k^{-}}\Upsilon_{i,j}^-=(1+O(\vare))(\beta-\alpha) \bar C_{k,1}
\]
where $\bar C_{k,1}$ is absolute and the implied constants depend on $R$. 

Furthermore, using $\vare=e^{-\eta'}$, we conclude  
\begin{align*}
\sum_{i,j}\Sob(f_{i,j,}^\pm)\Sob(\xi_i^\pm)e^{-\delta_2 t}&\ll (1+\absolute{\alpha}+\absolute{\beta})^N\vare^{-N}e^{-\delta_2 t}\\
&\ll (1+\absolute{\alpha}+\absolute{\beta})^N e^{-\delta_2t/2 },
\end{align*}
where the implied constant depends on $R$ and we assume $\eta'$ is small enough so that $\delta_2-N\eta'>\delta_2/2$. 

Altogether, there is some $\eta>0$ so that for $k=1,2$, we have  
\[
N_t'(\Omega_k)=\bar C_{k,1}(\beta-\alpha)e^{2t} + (1+\absolute{\alpha}+\absolute{\beta})^N e^{(2-2\eta)t}
\]
Since $\Omega\setminus \ball=\Omega_1\cup\Omega_2$ is a disjoint union, we conclude that
\be\label{eq: upp and lower bd phi ij 7}
N_t'(\Omega\setminus \ball)=\bar C_{1}(\beta-\alpha)e^{2t} + (1+\absolute{\alpha}+\absolute{\beta})^N e^{(2-2\eta)t}
\ee
where $\bar C_{1}=\bar C_{1,1}+\bar C_{2,1}$. 

The lemma follows from~\eqref{eq: upp and lower bd phi ij 7} and the definition of $\mathcal M_0$.  
 \end{proof}


\subsection*{Proof of Theorem~\ref{thm: main}}
We will again use the following 
\be\label{eq: contribution of vectors < et}
\#\{v\in \Lambda'\cap e^{\frac{2t}5}\ball\}\leq C_1'e^{\frac{8t}5}
\ee
where $C_1'$ depends on $R$, see~\eqref{eq: contribution of vectors < et 1}.

First Apply Lemma~\ref{lem: main counting 1}, with $t$ and $\Omega\setminus\ball$. Then 
\begin{multline}\label{eq: final ell 0}
N_t'(\Omega\setminus\ball)=\\
{\bar{C_1}}(\beta-\alpha)e^{2(t-\ell)}+\mathcal M'+ O\Bigl((1+\absolute{\alpha}+\absolute{\beta})^N e^{(2-2\eta)(t-\ell)}\Bigr)
\end{multline}
where
\[
\mathcal M'=\#\Bigl\{v\in \Lambda'_{\rm nz}\cap (L\cup L')\cap e^{t}\Bigl(\Omega\setminus \ball\Bigr): \alpha\leq \sqf(v)\leq \beta\Bigr\}
\]

We now control the contribution of $\Lambda'\cap e^t\ball$ to the count.
Recall our notation $\ball(e^{-\ell})=e^{-\ell}\ball$. Then $e^t\ball(e^{-\ell})=e^{t-\ell}\ball$, and 
\[
e^{t-\ell}\Bigl(\ball\setminus e^{-1}\ball\Bigr)=e^t(\ball(e^{-\ell})\setminus (e^{-1}\ball(e^{-\ell})).
\]
Applying Lemma~\ref{lem: main counting 1}
with $t-\ell$ (instead of $t$) for $\ell\leq 3t/5$ and $\ball\setminus e^{-1}\ball$, 
\begin{multline}\label{eq: final ell}
N_t'(\ball(e^{-\ell})\setminus e^{-1}\ball(e^{-\ell}))=\\
\bar{\bar{C_1}}(\beta-\alpha)e^{2(t-\ell)}+\mathcal M_\ell+ O\Bigl((1+\absolute{\alpha}+\absolute{\beta})^N e^{(2-2\eta)(t-\ell)}\Bigr)
\end{multline}
where
\[
\mathcal M_\ell=\#\Bigl\{v\in \Lambda'_{\rm nz}\cap (L\cup L')\cap e^{t}\Bigl(\ball(e^{-\ell})\setminus e^{-1}\ball(e^{-\ell})\Bigr): \alpha\leq \sqf(v)\leq \beta\Bigr\}
\]
and $L,L'$ are as in Lemma~\ref{lem: exceptional subspaces t-ell}.

Summing~\eqref{eq: final ell} over $0\leq \ell\leq 3t/5$, we get 
\[
N_t(\ball\setminus e^{-3t/5}\ball)=\bar{\bar{C_1}}(\beta-\alpha)e^{2t}+\mathcal M''+ O((1+\absolute{\alpha}+\absolute{\beta})^N e^{(2-\eta)t})
\]
where $\mathcal M''=\sum\mathcal M_\ell$. This,~\eqref{eq: final ell 0} and~\eqref{eq: contribution of vectors < et} thus imply
\be\label{eq: final 3}
N_t(\Omega)=\\
C_1(\beta-\alpha)e^{2t}+\mathcal M+ O\Bigl((1+\absolute{\alpha}+\absolute{\beta})^N e^{(2-\eta)t}\Bigr)
\ee
where $\mathcal M=\#\{v\in \Lambda'_{\rm nz}\cap (L\cup L')\cap e^t\Omega: \alpha\leq \sqf(v)\leq \beta\}$.

\medskip

To conclude the proof, we rewrite~\eqref{eq: final 3} in the notation of Theorem~\ref{thm: main} 
and further analyze $\mathcal M$. Recall that $t=\frac12\log T$, hence, by~\eqref{eq: counting Omega'} and~\eqref{eq: final 3},
\be\label{eq: final 4}
R_\torus(\alpha,\beta, T)=
C_1(\beta-\alpha)T+\mathcal M+ O\Bigl((1+\absolute{\alpha}+\absolute{\beta})^N T^{1-\frac{\eta}{2}}\Bigr). 
\ee
We now turn to the term $\mathcal M$. Since 
\[
\sqf(g_1w_1, g_2w_2)=\sqf(g_2^{-1}g_2w_1,w_2)\quad\text{and}\quad g_2^{-1}g_2=\begin{pmatrix}\mathsf a & \mathsf b\\ \mathsf b &\mathsf c\end{pmatrix}.
\]
We conclude, as in the proof of Lemma~\ref{lem: super exceptional}, that  
if we put $w_1=(\cox_1,\coy_1)$ and $w_2=(-\coy_2,\cox_2)$, then $u_i=(\cox_i,\coy_i)$ satisfy 
\be\label{eq: u1 u2 final}
\begin{aligned}
&\norm{u_i}\leq \max\{\norm{g_1^{\pm1}},\norm{g_2^{\pm1}}\}e^{\delta_1t/\bar A}\leq e^{2\delta_1t/\bar A}\quad\text{and} \\
&\absolute{B_\torus(u_1,u_2)}=\absolute{\sqf(g_2^{-1}g_1w_1,w_2)}\leq e^{-2\delta_1t/5}.
\end{aligned}
\ee 
where we assumed $t$ is large in the second inequality of the first line. 
Thus by Lemma~\ref{lem: super exceptional}, the pair $(w_1',w_2')$ is obtained from $(u_2, u_1)$ using the above relation, that is, 
$w'_1=(\cox_2,\coy_2)$ and $w'_2=(-\coy_1,\cox_1)$. 

Let $v\in \Lambda'\cap (L\cup L')\cap e^t\Omega$ satisfy that $\alpha\leq\sqf(v)\leq \beta$.
For simplicity, let us assume that $v\in L$ and write $v=\ell_1(g_1w_1,0)+\ell_2(0,g_2w_2)$. Then,
\[
v=(v_1+v_2, \omega(v_1-v_2))=(\ell_1g_1w_1, \ell_2g_2w_2)
\] 
where $v_i\in2\pi\Delta^*$ and $\norm{v_i}\leq e^t$. Recall also that $(g_1,g_2)=(g_\torus, -\omega g_\torus\omega)$ and $g_\torus\Z^2= 2\pi\Delta^*$, hence,  
\[
\begin{aligned}
v_1&=g_{\torus}\tfrac{\ell_1w_1-\ell_2\omega w_2}{2}=g_{\torus}\tfrac{\ell_1u_1+\ell_2u_2}{2}\\
v_2&=g_{\torus}\tfrac{\ell_1w_1+\ell_2\omega w_2}{2}=g_{\torus}\tfrac{\ell_1u_1-\ell_2u_2}{2};
\end{aligned}
\]
changing $L$ to $L'$ yields $v_1=g_{\torus}\frac{\ell_1u_1+\ell_2u_2}{2}$ and $v_2=g_{\torus}\tfrac{-\ell_1u_1+\ell_2u_2}{2}$.  

Altogether,~\eqref{eq: B and Q0'} implies that  
\[
\mathcal M=\#\left\{(\ell_1,\ell_2): \begin{array}{c}g_\torus\tfrac{\ell_1u_1+\ell_2u_2}{2}=v_1, g_\torus\tfrac{\ell_1u_1-\ell_2u_2}{2}=v_2\\
v_i\in2\pi\Delta^*, \norm{v_i}\leq e^t, \alpha\leq\norm{v_1}^2-\norm{v_2}^2\leq \beta\end{array}\right\}.
\]
By Lemma~\ref{lem: special subspace 2}, applied with $2\delta_1/\bar A$ and $\bar A/5$, we conclude that 
\[
\mathcal M\ll \max(\absolute{\alpha},\absolute{\beta}) e^{(2-\frac{2\delta_1}{\bar A}) t}=\max(\absolute{\alpha},\absolute{\beta})T^{1-\frac{\delta_1}{\bar A}},
\] 
where the implied constant depends on $\mathsf a$, $\mathsf b$, and $\mathsf c$ unless 
\[
\absolute{B_\torus(u_1,u_2)}\leq e^{(-2+\frac{2\delta_1}{\bar A})t}\leq T^{-1+\delta}.
\] 

Let $\kappa=\min\{\eta/2,\delta_1/\bar A\}$. Altogether, we conclude that 
\[
R_\torus(\alpha,\beta, T)=
C_1(\beta-\alpha)T + O\Bigl((1+\absolute{\alpha}+\absolute{\beta})^N T^{1-\kappa}\Bigr)
\]
unless $\{u_1,u_2\}$ satisfy~\eqref{eq: exceptional thm main}, in which case, we have 
\be\label{eq: final 5}
R_\torus(\alpha,\beta, T)=
C_1(\beta-\alpha)T+\mathcal M+ O\Bigl((1+\absolute{\alpha}+\absolute{\beta})^N T^{1-\kappa}\Bigr). 
\ee
We now show that $\mathcal M=M_T(u_1,u_2)$. Let $(\ell_1,\ell_2)$ be as in the definition of $\mathcal M$, then  
\[
B_\torus\bigl(\tfrac{\ell_1u_1+\ell_2u_2}{2}\bigr)=\norm{g_\torus\tfrac{\ell_1u_1+\ell_2u_2}{2}}^2=\norm{v_1}^2\leq e^{2t}=T
\]
Similarly for $v_2=\tfrac{\ell_1u_1-\ell_2u_2}{2}$. Moreover, we have 
\begin{align*}
\norm{v_1}^2-\norm{v_2}^2&=B_\torus\bigl(\tfrac{\ell_1u_1+\ell_2u_2}{2}\bigr)- B_\torus\bigl(\tfrac{\ell_1u_1-\ell_2u_2}{2}\bigr)\\
&=B_\torus(u_1,u_2)\ell_1\ell_2\in[\alpha,\beta].
\end{align*}
Thus $(\ell_1/2,\ell_2/2)$ satisfies the conditions in the definition $M_T(u_1,u_2)$. Similarly if $(\ell_1',\ell_2')$ satisfies the conditions in the definition $M_T(u_1,u_2)$, then $(2\ell_1',2\ell_2')$ satisfy the conditions in the definition of $\mathcal M$. 

The proof is complete.   
\qed

\subsection*{Proof of Corollary~\ref{cor: Stronger Diophantine}}
We first prove part~(1). Recall our assumption that there exist $A,q>0$ so that for all $(m,n,k)\in\Z^3$ we have 
\be\label{eq: linear form Dio proof}
\absolute{\mathsf a m+\mathsf b n+\mathsf c k}> q\norm{(m,n,k)}^{-A}.
\ee
This implies that~\eqref{eq: Diophantine condition} holds for some $A'$, depending on $A$, and all $T\geq T_0(A, q)$. 
Furthermore, in view of~\eqref{eq: linear form Dio proof}, for $u_i=(\cox_i,\coy_i)\in\Z^2$, we have 
\begin{multline*}
\absolute{B_\torus(u_1,u_2)}= \absolute{\mathsf a \cox_1\cox_2 +\mathsf b(\coy_1\cox_2+\cox_1\coy_2)+\mathsf c\coy_1\coy_2}>\\
q\norm{(\cox_1\cox_2, \coy_1\cox_2+\cox_1\coy_2,\coy_1\coy_2)}^{-A},
\end{multline*}
which implies~\eqref{eq: exceptional thm main} does not hold so long as $\delta$ is small enough. In view of Theorem~\ref{thm: main}, this finishes the proof of part~(1).

The proof of part~(2) is similar. Recall that $\mathsf b=0$ and $\mathsf{ac}=1$. 
By our assumption there exist $A,q>0$ so that for all $(m,n)\in\Z^2$, we have 
\be\label{eq: linear form Dio proof part 2}
\absolute{\mathsf a^2 m+n}> q\norm{(m,n)}^{-A}.
\ee
As in the previous case, we conclude that ~\eqref{eq: Diophantine condition} holds for some $A'$, depending on $A$, and all $T\geq T_0(A, q)$. Hence, by Theorem~\ref{thm: main}, either 
\[
\absolute{R_{\torus}(\alpha,\beta,T)-\pi^2(\beta-\alpha)} \leq  C (1+\absolute{\alpha}+\absolute{\beta})^{N}T^{-\kappa},
\] 
which implies the claim in this part, or there are $u_1,u_2 \in \Z^2\setminus\{0\}$ so that 
\be\label{eq: exceptional part 2 proof}
\norm{u_1},\norm{u_2} \leq T^{\delta/A} \qquad\text{and}  \qquad\absolute{B_\torus (u_1,u_2)}\leq T^{-1+\delta}
\ee
and moreover 
\be\label{eq: R torus proof of 2}
R_{\torus}(\alpha,\beta,T)-\pi^2(\beta-\alpha)=\frac{M_T(u_1,u_2)}{T} + O\Bigl( C (1+\absolute{\alpha}+\absolute{\beta})^{N}T^{-\kappa}\Bigr)
\ee
where 
\[
M_T(u_1,u_2) = \#\left\{(\ell_1,\ell_2)\in \tfrac12\Z^2: \begin{array}{l}\ell_1u_1\pm \ell_2u_2\in\Z^2,\\ \ B_\torus (\ell_1u_1\pm \ell_2u_2)\leq T,\\
\ \ {4B_\torus(u_1,u_2)}\ell_1\ell_2 \in {[\alpha,\beta]}\end{array}\right\}.
\]

By Lemma~\ref{lem: super exceptional}, if $T_0$ is large enough, 
then $B_\torus(u_1, u_2)=0$. Hence $M_T(u_1, u_2)$ does not contribute to $R'_\torus(\alpha, \beta)$. This and~\eqref{eq: R torus proof of 2} finish the proof of this case and of the corollary.    
\qed

\section{Equidistribution of expanding circles}\label{sec: equidistribution} 
In this section we prove an effective equidistribution result for circular averages; the proof is based on~\cite{LMW22}.

Let $G=\SL_2(\R)\times\SL_2(\R)$ and let $\Gamma\subset G$ be a lattice; put $X=G/\Gamma$. Let $m_X$ denote the $G$-invariant probability measure on $X$. 

We fix a right invariant metric on $G$ using the Killing form and the maximal compact subgroup $\SO(2)\times\SO(2)$, and let $d_X$ denote the induced metric on $X$. There exists $D'$ so that for all $\tau\geq 2$ and all $\theta\in\R$, 
\be\label{eq: d-X Lipschtiz}
d_X(x,x')\leq e^{D'\tau} d_X(\Delta(a_\tau r_\theta)x, \Delta(a_\tau r_\theta)x') 
\ee

For the convenience of the reader, we give again the statement of Theorem~\ref{thm: r effective equid}:

{
\renewcommand{\thesubsection}{\ref{thm: r effective equid}}
\begin{thm}\label{thm: r effective equid'}
Assume $\Gamma$ is arithmetic. 
For every $x_0\in X$, and large enough $R$ (depending explicitly on $X$ and the injectivity radius at $x_0$), for any $e^t\geq R^{D}$, at least one of the following holds.
\begin{enumerate}
\item For every $\varphi\in C_c^\infty(X)$ and $2\pi$-periodic smooth function $\xi$ on $\R$, we have 
\[
\biggl|\ave \varphi(\Delta(a_tr_\theta) x_0)\xi(\theta)\diff\!\theta-\int_0^{2\pi}\xi(\theta)\diff\!\theta \int \varphi\diff\!m_X\biggr|\leq \Sob(\varphi)\Sob(\xi)R^{-\kappa_0}
\]
where we use $\Sob(\cdot)$ to denote an appropriate Sobolev norm on both $X$ and $\R$ respectively. 
\item There exists $x\in X$ such that $Hx$ is periodic with $\vol(Hx)\leq R$, and 
\[
\dist_X(x,x_0)\leq R^{D}t^De^{-t}.
\] 
\end{enumerate} 
The constants $D$ and $\kappa_0$ are positive and depend on $X$ but not on $x_0$. 
\end{thm}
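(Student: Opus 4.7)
The plan is to deduce this from the effective equidistribution of unipotent orbits \cite[Thm.~1.1]{LMW22} by cutting the circle $\{\Delta(r_\theta)\}$ into short arcs on which the $r_\theta$-motion, after conjugation by $a_t$, becomes an approximate horocycle segment. A direct matrix computation gives
\[
a_t r_s a_{-t} = \begin{pmatrix}\cos s & -e^{2t}\sin s\\ e^{-2t}\sin s & \cos s\end{pmatrix},
\]
from which one reads off $\|a_t r_s a_{-t} - u_{-s e^{2t}}\| \ll s^2 + e^{2t}|s|^3 + e^{-2t}|s|$. With the choice $\delta := e^{-t}$ this error is uniformly $O(e^{-t})$ on $|s|\le\delta$.

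Next I would partition $[0,2\pi]$ into $\asymp e^t$ arcs of length $2\delta$ centered at points $\theta_j$, and on each arc write $r_{\theta_j+s}=r_s\, r_{\theta_j}$, which gives $\Delta(a_tr_{\theta_j+s})x_0 = \Delta(a_t r_s a_{-t})\,y_j$ with $y_j := \Delta(a_tr_{\theta_j})x_0$. The substitution $\tau = -s e^{2t}$ (so that $|\tau|\le T := \delta e^{2t} = e^t$) then converts the arc integral into
\[
e^{-2t}\,\xi(\theta_j)\int_{-T}^{T}\varphi(\Delta(u_\tau)y_j)\,d\tau\;+\;E_j,
\]
where $E_j$ bundles the error from replacing $a_tr_sa_{-t}$ by $u_\tau$ and from linearising $\xi$; summing over arcs gives $\sum_j|E_j| \ll \Sob(\varphi)\Sob(\xi)e^{-t}$.

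I would then apply \cite[Thm.~1.1]{LMW22} to each orbit $\{\Delta(u_\tau)y_j:|\tau|\le T\}$: either
\[
\frac{1}{2T}\int_{-T}^{T}\varphi(\Delta(u_\tau)y_j)\,d\tau = \int_X\varphi\,dm_X + O(\Sob(\varphi)R^{-\kappa_0}),
\]
or $y_j$ lies within distance $R^Dt^De^{-t}$ of a periodic $H$-orbit of volume $\le R$. The crucial structural observation is that since $\Delta(a_tr_{\theta_j})\in H$, left multiplication by this element preserves every $H$-orbit setwise and is an isometry of $X$, so that $d_X(y_j, Hx) = d_X(x_0, Hx)$ for any $H$-orbit $Hx$. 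Consequently the ``$y_j$ close to a periodic orbit'' alternative is equivalent to alternative~(2) of the theorem applied to $x_0$, and once alternative~(2) is ruled out the equidistribution alternative applies for every $y_j$ simultaneously. Assembling via the Riemann sum $\sum_j 2\delta\,\xi(\theta_j) = \int_0^{2\pi}\xi(\theta)\,d\theta + O(\delta\Sob(\xi))$ then delivers alternative~(1).

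The main obstacle is the tight balance of scales: the approximation $a_tr_sa_{-t}\approx u_{-se^{2t}}$ is accurate only for $|s|\ll e^{-t}$, while \cite[Thm.~1.1]{LMW22} prefers as long an orbit as possible. The choice $\delta = e^{-t}$ sits at the borderline on both sides, which is also precisely why the tolerance in alternative~(2) has the form $R^Dt^De^{-t}$. Secondary issues---tracking Sobolev norms uniformly across the compactly parameterised family $\{y_j\}$, verifying that the polynomial-height and cusp-excursion bounds of \cite{LMW22} pass uniformly to this setting, and accounting for the $R$-dependence of implicit constants---are routine.
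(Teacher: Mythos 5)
Your overall strategy (cut the circle into short arcs, conjugate by $a_t$ to turn each arc into a unipotent segment, apply \cite[Thm.~1.1]{LMW22} to each, and reassemble by a Riemann sum) is the same as the paper's, but your choice of scale $\delta=e^{-t}$ together with the step you call the ``crucial structural observation'' creates a genuine gap. Left multiplication by $\Delta(a_tr_{\theta_j})$ is \emph{not} an isometry of $X$: the metric on $X$ descends from a right-invariant metric on $G$, and left translation by an element containing $a_\tau$ distorts distances by a factor as large as $e^{2\tau}$ (this is exactly why the paper records the Lipschitz bound $d_X(x,x')\leq e^{D'\tau}d_X(\Delta(a_\tau r_\theta)x,\Delta(a_\tau r_\theta)x')$ in~\eqref{eq: d-X Lipschtiz}; it would be vacuous if left translation were an isometry). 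Hence $d_X(y_j,Hx)=d_X(x_0,Hx)$ is false in the quantitative sense you need, and the ``$y_j$ close to a periodic orbit'' alternative is not equivalent to alternative~(2) at $x_0$. Note also a second inaccuracy feeding into this: applying \cite[Thm.~1.1]{LMW22} to the orbit $\{\Delta(u_\tau)y_j:|\tau|\le e^t\}$ means applying it with base point $x_j'=\Delta(a_{t/2}r_{\theta_j})x_0$ and time parameter $e^{t/2}$ (since $u_{se^{t}}a_t=a_{t/2}u_sa_{t/2}$), so its second alternative places $x_j'$ --- not $y_j$ --- within roughly $R^{D_1}t^{D_1}e^{-t/2}$ of a periodic $H$-orbit. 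Converting that into a statement about $x_0$ costs a factor $e^{D't/2}$ from the non-isometry, which destroys the bound: you get $R^{D_1}t^{D_1}e^{(D'-1)t/2}$ rather than $R^Dt^De^{-t}$, so alternative~(2) cannot be deduced, and you also cannot rule out the bad alternative at the $y_j$'s in order to run alternative~(1).

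The paper's proof avoids exactly this trap by taking the arcs much longer: of length $\zeta=R^{-2/D''}$, a negative power of $R$ rather than of $e^t$. Then the renormalization $a_\tau u_{\zeta s}a_{-\tau}=u_s$ uses $\tau=-\tfrac12\log\zeta\asymp\log R$, the base points $\Delta(a_\tau r_{\hat\zeta})x_0$ are only an $R^{O(1)}$-bounded translate away from $x_0$, and pulling back the distance in the second alternative of \cite[Thm.~1.1]{LMW22} via~\eqref{eq: d-X Lipschtiz} loses only a factor $R^{D'}$ --- which is precisely what makes the bound $R^Dt^De^{-t}$ in alternative~(2) attainable (the cruder arc-approximation error $O(\zeta)=O(R^{-2/D''})$ is then absorbed into the $R^{-\kappa_0}$ rate in alternative~(1)). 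If you want to salvage your write-up, replace $\delta=e^{-t}$ by $\delta=R^{-c}$ for a suitable $c>0$ and carry the $U^-AU$ (or your direct matrix) approximation at that scale; as it stands, the deduction of alternative~(2) does not go through.
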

}
\addtocounter{subsection}{-1}

\begin{proof}
Fix $0<{\zeta}_0<1/10$ such that the $U^-AU$ decomposition is an analytic diffeomorphism on the identity neighborhood of radius $2\zeta_0$ in $\SL_2(\R)$ where $U^-$ is the subgroup of lower triangular unipotent matrices, $U$ is the subgroup of upper triangular unipotent matrices, and $A$ is the subgroup of diagonal matrices. In particular, there are analytic diffeomorphism $s^-$, $\tau$, $s$ from $(-\zeta_0,\zeta_0)$ to neighborhoods of $0$ in $(-1,1)$, such that $r_\zeta=u^-_{s^-(\zeta)}a_{\tau(\zeta)}u_{s(\zeta)}$. Note that 
\be\label{eq: tau s s-}
\tau(\zeta)=O(\zeta^2),\  s(\zeta)=\zeta+O(\zeta^2),\ s^{-}(\zeta)=-\zeta+O(\zeta^2),
\ee
and $\frac{\diff}{\diff\!\zeta}s=1+O(\zeta)$.

Using this we approximate the circular average (on small intervals) with unipotent average. First note that
\[
\begin{aligned}
\Delta(a_tr_{\hat\zeta+\zeta})x_0&=\Delta(a_tu^-_{s^-(\zeta)}a_{\tau(\zeta)}u_{s(\zeta)}r_{\hat\zeta})x_0\\
&=\Delta(a_tu^-_{s^-(\zeta)}a_{-t}a_{\tau(\zeta)})\Delta(a_tu_{s(\zeta)}r_{\hat\zeta})x_0
\end{aligned}
\]
is within distance $O(e^{-2t}s^-(\zeta)+\tau(\zeta))=O(e^{-2t}\zeta+\zeta^2)$ from $\Delta(a_tu_{s(\zeta)}r_{\hat\zeta})x_0$. Therefore for all $0\leq \zeta\leq\zeta_0$ we have 
\begin{multline*}
\frac1\zeta\int_0^\zeta\phi(\Delta(a_tr_{\hat\zeta+\theta})x_0)\diff\!\theta=\\
\frac1\zeta\int_0^\zeta\phi(\Delta(a_tu_{s(\theta)}r_{\hat\zeta})x_0)\diff\!\theta+O\Big(\Sob(\phi)(e^{-2t}\zeta+\zeta^2)\Big)=\\
\frac1\zeta\int_0^{s(\zeta)}\phi(\Delta(a_tu_\theta r_{\hat\zeta}(x_0)(s^{-1}(\theta))'\diff\!\theta+O\Big(\Sob(\phi)(e^{-2t}\zeta+\zeta^2)\Big)
\end{multline*}
where we used the above estimate in the first equality and a change of variable in the second equality. 

Since $s(\zeta)-\zeta=O(\zeta^2)$, see~\eqref{eq: tau s s-}, we conclude that 
\[
\frac1\zeta\int_0^\zeta\phi(\Delta(a_tr_{\hat\zeta+\theta})x_0)\diff\!\theta=
\frac1\zeta\int_0^{\zeta}\phi(\Delta(a_tu_\theta r_{\hat\zeta})x_0)(s^{-1}(\theta))'\diff \theta+O\Big(\Sob(\phi)\zeta\Big)
\]
where we used $e^{-2t}\zeta+\zeta^2\leq 2\zeta$. 

Similarly, using $\sup_{\theta\in(0,\zeta)}\absolute{(s^{-1}(\theta))'-1}\ll \zeta$ and a change of variable,   
\be\label{eq: circular 1}
\begin{aligned}
\frac1\zeta\int_0^\zeta\phi(\Delta(a_tr_{\hat\zeta+\theta})x_0)\diff\!\theta&=\frac1\zeta\int_0^{\zeta}\phi(\Delta(a_tu_\theta r_{\hat\zeta})x_0)\diff\!\theta+O\Big(\Sob(\phi)\zeta\Big)\\
&=\int_0^1\phi(\Delta(a_tu_{\zeta s} r_{\hat\zeta})x_0)\diff\!s+O\Big(\Sob(\phi)\zeta\Big). 
\end{aligned}
\ee
Let $\tau=-(\log\zeta)/2$. Then 
\be\label{eq: circular 2}
\begin{aligned}
 \int_0^1\phi(\Delta(a_tu_{\zeta s} r_{\hat\zeta})x_0)\diff\!s&=\int_0^1\phi(\Delta(a_{t-\tau}a_{\tau}u_{\zeta s}a_{-\tau}a_{\tau}r_{\hat\zeta})x_0)\diff\!s\\
 &=\int_0^1\phi(\Delta(a_{t-\tau}u_sa_{\tau}r_{\hat\zeta})x_0)\diff\!s.
\end{aligned}
\ee

Let $D_1$ and $\kappa_1$ be the constants given by~\cite[Thm.\ 1.1]{LMW22} applied with $X$ ($D_1$ denotes $A$ in~\cite[Thm.\ 1.1]{LMW22}). We will show the proposition holds with
\[
D=D_1+D'+1
\]
where $D'$ is as in~\eqref{eq: d-X Lipschtiz}. 

Let $T=e^{t-\tau}$ and $R=e^{D''\tau}$ for some $D''\geq 1$ which will explicated momentarily. Assume $e^t\geq R^D$, then 
\be\label{eq: T is large}
T=e^{t-\tau}=e^tR^{-1/D''}\geq R^{D-1}\geq R^{D_1}.
\ee

Apply~\cite[Thm.\ 1.1]{LMW22}, with $x_{\hat\zeta}:=\Delta(a_{\tau}r_{\hat\zeta})x_0$, $T\geq R^{D_1}$, see~\eqref{eq: T is large}, then so long as $D''$ is large enough, at least one of the following holds:

\medskip
\noindent
{\bf Case 1:} 
For every $\hat\zeta\in[0,2\pi]$ and all $\phi\in C_c^\infty(X)$, 
\be\label{eq: LMW applied}
\left|\int_0^1\phi(\Delta(a_{\log T}u_s)x_{\hat\zeta})\diff\!s-\int\phi\diff m_X\right|\leq \Sob(\phi)R^{-\kappa_1}.
\ee

\noindent
{\bf Case 2:} For some $\hat \zeta\in[0,2\pi]$, there exists $x\in X$ such that $Hx$ is periodic with $\vol(Hx)\leq R$ and 
\be\label{eq: case 2}
d_X(x,x_{\hat\zeta})\leq R^{D_1}(\log T)^{D_1}T^{-1}.
\ee

We will show that part~1 in the proposition holds if case 1 holds and part~2 in the proposition hols if case 2 holds.

\medskip

Let us first assume that case~1 holds. We begin with the following computation.  
\begin{multline}\label{eq: circular 3}
\int_0^{2\pi}\phi(\Delta(a_tr_\theta )x_0)\xi(\theta)\diff\!\theta=\\
\frac1{ \zeta}\int_{\hat\zeta=0}^{2\pi}\int_0^\zeta\phi(\Delta(a_tr_{\hat\zeta+\theta})x_0)\xi(\hat\zeta+\theta)\diff\!\theta\diff\!\hat\zeta=\\
\frac1{\zeta}\int_{\hat\zeta=0}^{2\pi}\biggl(\int_0^\zeta\phi(\Delta(a_tr_{\hat\zeta+\theta})x_0)\diff\!\theta\biggr)\xi(\hat\zeta)\diff\!\hat\zeta+O(\Sob(\phi)\Sob(\xi)\zeta).
\end{multline}
Furthermore, by~\eqref{eq: circular 1} and~\eqref{eq: circular 2}, we have
\be\label{eq: circular 4}
\frac1\zeta\int_0^\zeta\phi(\Delta(a_tr_{\hat\zeta+\theta})x_0)\diff\!\theta=\int_0^1\phi(\Delta(a_{\log T}u_s)x_{\hat\zeta})\diff\!s+O(\Sob(\phi)\zeta). 
\ee
Altogether, using~\eqref{eq: LMW applied},~\eqref{eq: circular 3}, and~\eqref{eq: circular 4}, we conclude that 
\begin{multline}\label{eq: r effective equid 1}
\biggl|\ave \varphi(\Delta(a_t\,r_\theta) x_0)\xi(\theta)\diff\!\theta-\int_0^{2\pi}\xi(\theta)\diff\!\theta \int \varphi\diff\!m_X\biggr|\leq \\\Sob(\varphi)\Sob(\xi)R^{-\kappa_0};
\end{multline}
where $\kappa_0=\min\{\kappa_1, 2/D''\}$ --- we used $\zeta=e^{2\tau}= R^{2/D''}$.  
Thus, part~1 in the proposition holds if case~1 holds.

\medskip

Let us now assume that case~2 holds and let $x_{\hat\zeta}=\Delta(a_{\tau}r_{\hat\zeta})x_0$ be as in~\eqref{eq: case 2}. Then by~\eqref{eq: d-X Lipschtiz}, we have
\begin{align*}
d_X(\Delta(a_{\tau}r_{\hat\zeta})^{-1}x, x_0)&\leq e^{D'\tau} R^{D_1}(\log T)^{D_1}T^{-1}\\
&\leq e^{(1+D')\tau} R^{D_1} t^{D_1} e^{-t}\leq R^Dt^De^{-t}.
\end{align*}
Furthermore, $\Delta(a_{\tau}r_{\hat\zeta})^{-1}x$ has a periodic $H$-orbit of volume $\leq R$. Thus part~2 in the proposition holds in this case.
The proof is complete.
\end{proof}


\section{Cusp functions of Margulis and the upper bound}\label{eq: upper bound}
In this section, we put 
\[
\Gamma=\SL_2(\Z)\times\SL_2(\Z)\subset G.
\]
Recall the following definition. 
{
\renewcommand{\thesubsection}{\ref{def: special}}
\begin{definition}
Let $g=(g_1,g_2)\in G$.
A two dimensional $g\Z^4$-rational linear subspace $L\subset\R ^4$ is called $(\rho, A,t)$-exceptional if 
there are $(v_1,0), (0,v_2)\in\Z^4$ satisfying  
\be\label{eq: quasi-null}
\norm{g_1v_1}, \norm{g_2v_2}\leq e^{\rho t} \quad\text{and}\quad \absolute{\sqf(g_1v_1, g_2v_2)}\leq e^{-A\rho t}
\ee
so that $L$ is spanned by $\{(g_1v_1,0), (0,g_2v_2)\}$. 

Given a $(\rho, A,t)$-special subspace $L$, we will refer to $\{(g_1v_1,0), (0,g_2v_2)\}$ as a {\em spanning set} for $L$.   
\end{definition}
}
\addtocounter{subsection}{-1}

Let $f_i\in C_c(\R^2)$, and define $f$ on $\R^4$ by $f(w_1,w_2)=f_1(w_1)f_2(w_2)$.  
For every $h\in \SL_2(\R)$, let
\be\label{eq: def tilde f}
\tilde f_{\rho,A, t}(h;g\Gamma)=\sum_{v\in {\Nt}(g\Z^4)} f(\Delta(h)v).
\ee
where ${\Nt}\Bigl(g\Z^4\Bigr)$ denotes the set of 
vectors in $g\Z^4$ not contained in any $(\rho,A, t)$-special subspace $L$ and also not contained in $\R^2\times\{0\}\cup\{0\}\cup\R^2$. 
In the sequel, we will often drop the dependence on $A$, $\rho$, and $t$ from the notation and denote 
$\tilde f_{\rho,A, t}(h;g\Gamma)$ by $\tilde f(h;g\Gamma)$.

\medskip

The following is one of the main result of this section.

\begin{propos}\label{prop: main}
For all $A_1\geq 10^3$ we have the following: Let $(g_1,g_2)\in G$.  
Then for all small enough $\rho$ and all large enough $t$ at least one of the following holds:
\begin{enumerate} 
\item Let $\mathcal C_t=\{\theta\in[0,2\pi]: \tilde f(a_t\rot_\theta; g\Gamma)\geq e^{A_1\rho t}\}$. Then 
\[
\int_{\mathcal C_t}\tilde f(a_t\rot_\theta;g\Gamma)\diff\!\theta\ll e^{-\rho^3 t/A_1}.
\]
where $\tilde f(h;g\Gamma)=\tilde f_{\rho, A_1, t}(h;g\Gamma)$, see~\eqref{eq: def tilde f}.
\item There exists $Q\in\Mat_2(\Z)$ whose entries are bounded by $e^{100\rho t}$ and $\lambda\in\R$ 
satisfying $\norm{g_2^{-1}g_1-{\lambda}Q}\ll e^{-(A_1-100)\rho t}$.
\end{enumerate}
The implied constants depend polynomially on $\norm{g_1}$ and $\norm{g_2}$. 
\end{propos}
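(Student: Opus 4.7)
The plan is to bound $\tilde f(a_t r_\theta; g\Gamma)$ by a restricted lattice point count and then to show that, under failure of conclusion~(2), the full integral $\int_0^{2\pi}\tilde f(a_t r_\theta;g\Gamma)\diff\theta$ is already polynomially small in $e^t$. Since $\mathcal C_t\subset[0,2\pi]$ and $\tilde f\geq 0$, this implies~(1) a fortiori, and in particular with exponent $\rho^3/A_1$ after standard parameter choices.

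First, since $f=f_1f_2$ is compactly supported, fix $R$ with $\supp f_i\subset B(0,R)$. Then $\tilde f(a_t r_\theta;g\Gamma)\leq\|f\|_\infty\cdot N(t,\theta)$, where $N(t,\theta)$ counts the number of non-zero pairs $(w_1,w_2)\in\Z^2\times\Z^2$ with $(g_1w_1,g_2w_2)$ outside every $(\rho,A_1,t)$-exceptional subspace and $\|a_t r_\theta g_iw_i\|\leq R$ for $i=1,2$. For fixed $0\neq w\in\R^2$, the set $\{\theta:\|a_t r_\theta w\|\leq R\}$ is an arc of measure $\ll Re^{-t}/\|w\|$ centered at the angle $\theta_w$ rotating $w$ to the contracting axis of $a_t$. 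Hence $(w_1,w_2)$ contributes only when the two arcs overlap, forcing
\[
|\sqf(g_1w_1,g_2w_2)|\ll Re^{-t}\cdot\max_i\|g_iw_i\|.
\]

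The sum defining $N(t,\theta)$ is then split by $M=\max_i\|g_iw_i\|$. For short pairs ($M\leq e^{\rho t}$), I apply Lemma~\ref{lem: Mobius} to the primitive direction pair $(u_1,u_2)$: either conclusion~(2) of the proposition holds, or there are at most two $(\rho,A_1,t)$-exceptional primitive direction pairs. Non-exceptional compatible short pairs therefore arise from primitive $(u_1,u_2)$ with $\|g_iu_i\|\leq e^{\rho t}$ and $|\sqf(g_1u_1,g_2u_2)|>e^{-A_1\rho t}$. Counting integer multiples $(n_1u_1,n_2u_2)$ compatible with the constraint and summing the $\theta$-measure $\leq Re^{-t}/\max(|n_1|\|g_1u_1\|,|n_2|\|g_2u_2\|)$ yields a per-direction contribution $\ll R^2 t\,e^{(A_1\rho-2)t}/|\sqf(g_1u_1,g_2u_2)|^{-1}$, bounded by $R^2 t\,e^{(A_1\rho-2)t}$. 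Using Minkowski to count $\ll e^{4\rho t}$ primitive direction pairs yields total short-pair contribution $\ll R^2 t\,e^{((A_1+4)\rho-2)t}$, which is polynomially small provided $\rho<2/(A_1+5)$. For long pairs ($M>e^{\rho t}$), each contributes $\theta$-measure $\leq Re^{-(1+\rho)t}$; summing via the standard Minkowski identity $\sum_{w\in g_1\Z^2,\,e^{\rho t}<\|w\|\leq Re^t}\|w\|^{-1}\ll R$ together with a generic Minkowski bound $|g_2\Z^2\cap a_{-t}r_{-\theta}B(0,R)|\ll R^2$ and a Margulis cusp estimate on the exceptional $\theta$'s, gives total long-pair contribution $\ll R^{O(1)}e^{-\rho t}$.

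Combining yields $\int_0^{2\pi}\tilde f(a_t r_\theta;g\Gamma)\diff\theta\ll e^{-\kappa t}$ with $\kappa=\kappa(\rho,A_1)>0$, whence~(1) follows. The main obstacle is the short-pair analysis: the interplay between the threshold $e^{-A_1\rho t}$ in the non-exceptional condition, the compatibility threshold $Re^{-t}\cdot\max_i\|g_iw_i\|$, and the Minkowski count of short primitive directions must be carefully balanced, and the precise exponent $\rho^3/A_1$ is the net of this optimization (with ample slack, since any positive $\kappa$ suffices). A subsidiary difficulty is the long-pair step, where for the ``cuspidal'' angles at which $a_t r_\theta g_i\Z^2$ has a short vector, one must invoke the standard $\SL_2(\R)/\SL_2(\Z)$ Margulis-function estimate to control $|g_2\Z^2\cap a_{-t}r_{-\theta}B(0,R)|$ on average; this is a well-known Eskin--Margulis-type contraction inequality.
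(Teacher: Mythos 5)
Your proposal has a fatal gap at its core: the claim that, when conclusion~(2) fails, the \emph{full} integral $\int_0^{2\pi}\tilde f(a_t\rot_\theta;g\Gamma)\,\diff\theta$ is polynomially small in $e^t$ is false, and no bookkeeping can repair it. For a typical $\theta$ the lattice pair $(a_t\rot_\theta g_1\Z^2, a_t\rot_\theta g_2\Z^2)$ lies in a fixed compact part of the space, so there are $\asymp R^4$ pairs $(w_1,w_2)$ with both components in $B(0,R)$, almost none of which lie in the (at most two) exceptional subspaces; hence $\tilde f\gg1$ on a set of $\theta$ of measure $\gg1$, and indeed the circular average converges to the positive Siegel term $2\pi\int_X\hat f\,\diff m_X$ (this is exactly what Theorem~\ref{thm: equi of f hat} asserts). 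The restriction to the high-excursion set $\mathcal C_t=\{\tilde f\geq e^{A_1\rho t}\}$ is therefore essential, not a convenience one may discard ``a fortiori.'' The error in your accounting is concrete: in dimension $2$ one has $\sum_{w\in g_1\Z^2,\,\|w\|\leq Re^t}\|w\|^{-1}\asymp Re^t$, not $\ll R$; inserting the correct value, your long-pair bound becomes $O(R^{O(1)})$ rather than $O(e^{-\rho t})$, consistent with the fact that the dominant contribution to the full integral comes precisely from pairs with $\|g_1w_1\|\asymp\|g_2w_2\|\asymp e^t$.

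Two further points where the proposal misses the actual mechanism. First, the Diophantine hypothesis is invoked only for ``short pairs'' ($\max_i\|g_iw_i\|\leq e^{\rho t}$), but those pairs are automatically near-$\sqf$-orthogonal on their overlap arcs and hence are already excluded from $\tilde f$ as exceptional; the place where the arithmetic input is genuinely needed is for vectors of \emph{intermediate} size $e^{\rho t}\leq\|g_1v_1\|\leq e^t$ admitting a near-$\sqf$-orthogonal partner, since these are not exceptional yet produce the large values $\tilde f\geq e^{A_1\rho t}$ at cuspidal angles. The paper's Lemma~\ref{lem: not many solutions} (proved via the M\"obius Lemma~\ref{lem: Mobius}) gives the dichotomy: either the number of such directions at scale $e^{n}$ is $\ll e^{(2-\rho)n}$ -- a power saving that, combined with the elementary sublemma bounding $\int\|a_t\rot_\theta w\|^{-2-2\delta}\diff\theta$ and the lower bound $\|g_1v_1\|\geq e^{\rho t}$ forced by non-exceptionality, makes the contribution of $\mathcal C_t$ small -- or option~(2) holds. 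Second, controlling the product of the two cusp excursions cannot be done by a single ``Margulis cusp estimate'': $\omega_1\omega_2$ is at the $L^2$ borderline, and the paper needs the splitting into the comparability set $\Theta_t(\delta)$ and its complement together with the $L^{1+\delta/2}$ contraction estimates of Lemmas~\ref{lem: L2 bound SL2} and~\ref{lem: alpha1 and alpha3}, plus the observation that off the set $\mathcal B_t$ the maximizing pair realizes both $\omega_1$ and $\omega_2$. None of this structure is present in the proposal, so the argument does not establish the proposition.
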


The proof of this proposition occupies most of this section.

\subsection*{The cusp functions} 
Let $\pcal$ denote the set of primitive vectors in $\Z^2$. 
For any $h\in\SL_2(\R)$, define 
\be\label{eq: define omega-i}
\omega(h\SL_2(\Z))= \sup\Bigl\{1/\norm{hv}: v\in\pcal\Bigr\}. 
\ee

We begin with the following lemma.  

\begin{lemma}[cf.\ Lemma 7.4~\cite{EskinMasur-Upp}]
\label{lem: L2 bound SL2}
For every $0< p<2$, there exists $t_p$ and $b_p$ so that the following holds. 
For every $x\in \SL_2(\R)/\SL_2(\Z)$ and all $t\geq t_p$, we have  
\[
\ave\omega(a_t\rot_\theta x)^{p}\diff\!\theta\leq 2^{-t/t_p} \omega(x)^{p}+ b_p.
\] 
\end{lemma}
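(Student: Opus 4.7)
The plan is to bound $\omega$ pointwise via Minkowski's theorem in dimension two and reduce the circular average to a sum of one-vector integrals. Since any unimodular lattice in $\R^2$ contains at most one pair $\pm v$ of primitive vectors with $\|h_y v\|<1/\sqrt{2}$ (as $\|h_y v_1\|\cdot\|h_y v_2\|\geq |\det(h_y v_1,h_y v_2)|\geq 1$ for linearly independent primitives), one has, for $y=h_y\SL_2(\Z)$,
\[
\omega(y)^p\leq (\sqrt 2)^p+\sum_{[v]\in\pcal/\{\pm 1\}}\mathbf{1}_{\|h_y v\|\leq 1/\sqrt 2}\,\|h_y v\|^{-p}.
\]
I would apply this at $y=a_t\rot_\theta x$ with $x=h\SL_2(\Z)$, swap sum and integral, and estimate each resulting term separately.

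The key one-vector computation is that, writing $\psi=\theta+\arg(hv)$,
\[
\|a_t\rot_\theta hv\|^2=\|hv\|^2\bigl(e^{2t}\cos^2\psi+e^{-2t}\sin^2\psi\bigr).
\]
Substituting $u=e^t\eta$ around the two poles $\psi=\pm\pi/2+\eta$ (the only places where the indicator $\|a_t\rot_\theta hv\|\leq 1/\sqrt 2$ can be satisfied), a direct computation yields, for $\|hv\|\leq e^t/\sqrt 2$,
\[
\int_0^{2\pi}\mathbf{1}_{\|a_t\rot_\theta hv\|\leq 1/\sqrt 2}\,\|a_t\rot_\theta hv\|^{-p}\,\diff\!\theta\;\ll_p\;\|hv\|^{-p}\,e^{-\alpha(p)t},
\]
with $\alpha(p)=\min(p,2-p)>0$ for $p\in(0,2)$ and a harmless logarithmic correction at $p=1$.

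I then split the sum into (i) the unique short class $[v_0]$ with $\|h v_0\|<1/\sqrt 2$ in the cusp case $\omega(x)>\sqrt 2$, whose contribution after integration is at most $C_p\,\omega(x)^p\,e^{-\alpha(p)t}$; and (ii) all other primitive classes with $\|hv\|\in[1/\sqrt 2,e^t/\sqrt 2]$, which are summed dyadically using the lattice point count $\#\{v\in h\Z^2:\|v\|\sim R\}\ll R^2+1$. The exponential decay $e^{-\alpha(p)t}$ of the per-vector integral exactly balances the polynomial growth of the weighted count (this is precisely where the hypothesis $p<2$ is used), producing a bounded total $b_p$ independent of $t$ and $x$. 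Combining yields
\[
\int_0^{2\pi}\omega(a_t\rot_\theta x)^p\,\diff\!\theta\leq C_p\,e^{-\alpha(p)t}\,\omega(x)^p+b_p,
\]
and the claim follows on choosing $t_p$ so that $C_p\,e^{-\alpha(p)t}\leq 2^{-t/t_p}$ for all $t\geq t_p$.

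The hard part is the regime $p\leq 1$, where one cannot afford to drop the indicator and bound each per-vector term by the unrestricted $\|hv\|^{-p}I_p(t)$: the resulting sum over non-$v_0$ primitives would grow like $e^{(2-2p)t}$. One must exploit the arc-localisation $|\cos\psi|\leq 1/(\|hv\|e^t\sqrt 2)$ imposed by the indicator to gain an extra factor $e^{-t}$ per vector, which is then what allows the dyadic sum to converge uniformly in $t$. A cleaner alternative is to prove the lemma first for some $p'\in(1,2)$, where the unrestricted one-vector bound $\|hv\|^{-p'}I_{p'}(t)\sim \|hv\|^{-p'}e^{(p'-2)t}$ already suffices, and then deduce the case $p\in(0,p')$ via H\"older's inequality at a modest cost in the exponent $t_p$.
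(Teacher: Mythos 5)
Your argument is correct in substance, but it is worth noting that the paper does not prove this lemma at all: its ``proof'' is a one-line citation to the literature (the statement is the standard $\SL_2(\R)$ contraction/Margulis-function estimate, cf.\ Lemma~7.4 of Eskin--Masur and Eskin--Mozes), so what you have written is essentially a self-contained reconstruction of the argument behind that citation. Your route is the expected one and it is genuinely elementary in this rank-one setting: the pointwise bound $\omega(y)^p\leq (\sqrt2)^p+\sum 1_{\|h_yv\|\leq 1/\sqrt2}\|h_yv\|^{-p}$ is legitimate because two linearly independent primitive vectors of a unimodular planar lattice cannot both be shorter than $1/\sqrt2$, the one-vector circular integral with the indicator does give the decay you state (with the logarithm at $p=1$), and you correctly isolate the only delicate point, namely that for $p\leq 1$ the unrestricted bound $\|hv\|^{-p}e^{-\min(p,2-p)t}$ is not summable over $\sim e^{(2-p)t}$ vectors, so one must either keep the arc-localisation $|\cos\psi|\ll(\|hv\|e^t)^{-1}$ (which upgrades the per-vector bound to $\ll\|hv\|^{-1}e^{-t}$ for $\|hv\|\gtrsim1$ and makes the dyadic sum uniformly bounded) or prove the estimate for some $p'\in(1,2)$ and deduce $p<p'$ by H\"older; both fixes are valid, and the H\"older reduction is the cleanest way to finish.

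One justification should be patched: the counting bound you invoke, $\#\{v\in h\Z^2:\|v\|\sim R\}\ll R^2+1$, is false uniformly over unimodular lattices (the correct bound carries an extra term $R/\lambda_1$, which dominates at scales $R<1/\lambda_1$ when the lattice has a very short vector). What saves you is that your sum runs only over \emph{primitive} classes of norm at least $1/\sqrt2$: any primitive vector other than $\pm v_0$ has norm at least $1/\lambda_1$ by the same determinant argument you used for uniqueness, so whenever such vectors exist at scale $R$ one has $\lambda_1\geq 1/R$ and hence $R/\lambda_1\leq R^2$, restoring the bound $\ll R^2+1$ for the primitive count. State this explicitly rather than quoting the all-vectors count, and the proof is complete.
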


\begin{proof}
This is by now well known, see e.g.~\cite{EskinMozes-MF}. 
\end{proof}

\subsection*{The sets $\Theta_t(\delta)$ and $\Theta_t'(\delta)$}
To put an emphasis on the product structure of $G$ and $X$, 
we will often write $X=G_1/\Gamma_1\times G_2/\Gamma_2$ where $G_i=\SL_2(\R)$ and $\Gamma_i=\SL_2(\Z)$. 
Moreover, given $g=(g_1,g_2)\in G$, we write 
\be\label{eq: def omegai}
\omega_i(g_i\Gamma_i):=\omega(g_i\SL_2(\Z)). 
\ee

For $i=1,2$, let $x_i\in G_i/\Gamma_i$. For all $t\geq 0$ and every $0<\delta\leq 1/10$, let 
\be\label{eq: def Theta-t}
\Theta_t(\delta)\!=\!\Bigl\{\theta\in[0,2\pi]\!:\! \omega_2(a_t\rot_\theta x_2)^{1-2\delta}\leq \omega_1(a_t\rot_\theta x_1)\leq \omega_2(a_t\rot_\theta x_2)^{1+2\delta}\Bigr\}
\ee
and let $\Theta_t'(\delta)=[0,2\pi]\setminus\Theta_t(\delta)$. 

\medskip

We have the following 

\begin{lemma}\label{lem: alpha1 and alpha3}
Let $0<\delta<1/10$, and put 
\[
p_1=(2-2\delta)(1+\tfrac12\delta) \quad\text{and}\quad p_2=\frac{(2+2\delta)(1+\frac12\delta)}{1+2\delta};
\]
note that $p_1,p_2<2$. Let $t(\delta)=\max(t_{p_1}, t_{p_2})$ and $b(\delta)=\max(b_{p_1},b_{p_2})$
where the notation is as in Lemma~\ref{lem: L2 bound SL2}. Then for all $(x_1,x_2)\in X$ and all $t\geq t(\delta)$ 
\[
\int_{\Theta'_t(\delta)} \Bigl(\omega_1(a_t\rot_\theta x_1)\omega_2(a_t\rot_\theta x_2)\Bigr)^{1+\frac{1}{2}\delta}\diff\!\theta
\leq 2^{-t/t(\delta)}\Bigl(\omega_1(x_1)+\omega_2(x_2)\Bigr)+ 2b(\delta).
\]
\end{lemma}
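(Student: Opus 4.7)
The plan is to split $\Theta'_t(\delta)$ into two pieces based on which of the two inequalities defining $\Theta_t(\delta)$ fails, bound the product $(\omega_1\omega_2)^{1+\delta/2}$ pointwise on each piece by a single power of one of the $\omega_i$'s, and finally invoke Lemma~\ref{lem: L2 bound SL2} on each integral separately. The role of the special exponents $p_1,p_2$ in the hypothesis is to ensure that this reduction lands on powers strictly below $2$, which is exactly the regime in which Lemma~\ref{lem: L2 bound SL2} gives a geometric contraction.

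More precisely, I would write $\Theta'_t(\delta)=\Theta'_{t,1}(\delta)\cup\Theta'_{t,2}(\delta)$, where on $\Theta'_{t,1}(\delta)$ one has $\omega_1(a_tr_\theta x_1)<\omega_2(a_tr_\theta x_2)^{1-2\delta}$ and on $\Theta'_{t,2}(\delta)$ one has $\omega_1(a_tr_\theta x_1)>\omega_2(a_tr_\theta x_2)^{1+2\delta}$. On the first piece,
\[
\bigl(\omega_1\omega_2\bigr)^{1+\tfrac12\delta}\le \omega_2^{(1-2\delta)(1+\frac12\delta)}\cdot\omega_2^{1+\frac12\delta}=\omega_2^{(2-2\delta)(1+\frac12\delta)}=\omega_2^{p_1},
\]
and on the second, using $\omega_2<\omega_1^{1/(1+2\delta)}$,
\[
\bigl(\omega_1\omega_2\bigr)^{1+\tfrac12\delta}\le \omega_1^{(1+\frac12\delta)\bigl(1+\frac{1}{1+2\delta}\bigr)}=\omega_1^{\frac{(2+2\delta)(1+\frac12\delta)}{1+2\delta}}=\omega_1^{p_2}.
\]
An elementary check (expanding both expressions) gives $p_1=2-\delta-\delta^2<2$ and $p_2=2-\tfrac{\delta-\delta^2}{1+2\delta}<2$, so both exponents fall inside the range of Lemma~\ref{lem: L2 bound SL2}.

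With these pointwise bounds in hand, the integral over $\Theta'_t(\delta)$ is dominated by $\int_0^{2\pi}\omega_2(a_tr_\theta x_2)^{p_1}\diff\!\theta+\int_0^{2\pi}\omega_1(a_tr_\theta x_1)^{p_2}\diff\!\theta$, and applying Lemma~\ref{lem: L2 bound SL2} with $p=p_1$ and $p=p_2$ respectively, together with the definitions $t(\delta)=\max(t_{p_1},t_{p_2})$ and $b(\delta)=\max(b_{p_1},b_{p_2})$, gives the claim (the exponents on $\omega_i(x_i)$ on the right-hand side coming out as $p_i$, which in the asserted form are absorbed by the base-point term).

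There is no real obstacle here — the content of the lemma is essentially the algebraic observation that whenever one of $\omega_1,\omega_2$ is significantly smaller than a power of the other, the product $(\omega_1\omega_2)^{1+\delta/2}$ is controlled by a sub-quadratic power of whichever factor dominates. The only care needed is in verifying that both $p_1$ and $p_2$ are strictly below $2$ for $\delta\in(0,\tfrac{1}{10})$, so that the Margulis-type $L^p$ contraction from Lemma~\ref{lem: L2 bound SL2} is available.
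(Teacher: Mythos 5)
Your proof is correct and follows essentially the same route as the paper: split $\Theta'_t(\delta)$ according to which of the two defining inequalities fails, dominate $\bigl(\omega_1\omega_2\bigr)^{1+\frac12\delta}$ pointwise by $\omega_2^{p_1}$ on one piece and by $\omega_1^{p_2}$ on the other, and apply Lemma~\ref{lem: L2 bound SL2} with $p=p_1$ and $p=p_2$. The one cosmetic discrepancy you flag --- that the contraction lemma yields $\omega_i(x_i)^{p_j}$ rather than the first powers appearing in the stated bound --- is present in the paper's own write-up as well (which also contains some index typos in its definition of the two pieces), so it is not a gap particular to your argument.
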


\begin{proof}
Let us write $\Theta'_t(\delta)=\Theta'_{t,1}(\delta)\cup\Theta'_{t,2}(\delta)$, where 
\begin{align*}
\Theta'_{t,1}(\delta)&=\{\theta\in[0,2\pi]:\omega_2(a_t\rot_\theta x_1)< \omega_1(a_t\rot_\theta x_2)^{1-2\delta}\}\\
\Theta'_{t,2}(\delta)&=\{\theta\in[0,2\pi]:\omega_2(a_t\rot_\theta x_1)> \omega_1(a_t\rot_\theta x_2)^{1+2\delta}\}.  
\end{align*}

Using Lemma~\ref{lem: L2 bound SL2}, for every $t>t_{p_1}$ we have  
\begin{align*}
\int_{\Theta_{t,1}'(\delta)} \Bigl(\omega_1(a_t\rot_\theta x_1)\omega_2(a_t\rot_\theta x_2)\Bigr)^{1+\frac12\delta}\diff\!\theta&\leq \ave \omega_1(a_t\rot_\theta x_1)^{p_1}\diff\!\theta\\
&\leq 2^{-t/t_{p_1}}\omega_2(x_2)+b_{p_1}.
\end{align*}
Similarly, for every $t>t_{p_2}$, we have
\begin{align*}
\int_{\Theta_{t,2}'(\delta)} \Bigl(\omega_1(a_t\rot_\theta x_1)\omega_2(a_t\rot_\theta x_2)\Bigr)^{1+\frac12\delta}\diff\!\theta&\leq \ave \omega_2(a_t\rot_\theta x_2)^{p_2}\diff\!\theta\\
&\leq 2^{-t/t_{p_2}}\omega_1(x_1)+b_{p_2}.
\end{align*} 
The claim follows from these two estimates.  
\end{proof}

\subsection*{A Diophantine condition} 
The following lemma is a crucial input in the proof of Proposition~\ref{prop: main}. 

For every $t\geq 1$, let 
\begin{align*}
&\pcal_{t}=\{v\in\pcal: e^{t-1}\leq \norm{v}< e^t\}\\
&\pcal(t)=\{v\in\pcal: \norm{v}< e^t\}.
\end{align*}

\begin{lemma}\label{lem: not many solutions}
The following holds for all $A\geq 10^3$ and all $\rho\leq 1/(100A)$. 
Let $(g_1,g_2)\in G$, there exist 
$t_1\geq 1$, depending on $\rho$ and polynomially on $\norm{g_i}$, so that if 
$t\geq t_1$, then at least one of the following holds:
\begin{enumerate}
\item We have  
\[
\#\Bigl\{v_1\in\pcal_{t}: \exists v_2\in\pcal(t), \absolute{\sqf(g_1v_1,g_2v_2)}\leq e^{-A\rho t}\Bigr\}\ll e^{(2-\rho)t}
\]
where the implied constant depends polynomially on $\norm{g_i}$. 
\item There exist $Q\in\Mat_2(\Z)$ whose entries are bounded by $e^{100\rho t}$ and $\lambda\in\R$ 
satisfying $\norm{g_2^{-1}g_1-{\lambda}Q}\leq e^{-(A-100)\rho t}$.
\end{enumerate}
\end{lemma}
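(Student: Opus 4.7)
The plan is to argue by contrapositive: assume (2) fails and deduce (1). Set $h := g_2^{-1} g_1$, so that by $\SL_2(\R)$-invariance of $\sqf$ the good-pair condition becomes $|\sqf(hv_1, v_2)| \leq c_0 e^{-A\rho t}$ (with $c_0$ polynomial in $\|g_i\|$), and decomposing $hv_1 = \mu v_2 + w$ with $w \perp v_2$ gives $\|w\| \leq c_0 e^{-A\rho t}/\|v_2\|$ and $|\mu| \asymp \|hv_1\|/\|v_2\|$. The goal is to produce, from a large set $S$ of good $v_1$'s, three well-positioned pairs $(v_1^{(j)}, v_2^{(j)})$ satisfying the hypotheses of Lemma~\ref{lem: Mobius}, whose conclusion will contradict the failure of (2).

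Suppose for contradiction that $|S| \geq C_1 e^{(2-\rho)t}$ with $C_1 = C_1(\|g_i\|)$ large. Since primitive vectors in $\pcal_t$ distribute across $\mathbb{P}^1$ with density $\asymp e^{2t}$ per unit arc, pigeon-holing the directions $\overline{v_1}$, $v_1 \in S$, into arcs of length $\delta = e^{(4\rho-2)t}$ produces an arc containing $\gtrsim e^{3\rho t}$ members of $S$. For $t$ large we select three distinct $v_1^{(1)}, v_1^{(2)}, v_1^{(3)}$ from such an arc, with associated $v_2^{(j)}$, so that $1 \leq |\sqf(v_1^{(j)}, v_1^{(k)})| \leq e^{2t}\delta \ll e^{4\rho t}$. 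The M\"obius action of $h$ on $\mathbb{P}^1$ has polynomially bounded distortion in $\|g_i\|$, so the $\overline{hv_1^{(j)}}$ also lie in a short arc; and since $\overline{v_2^{(j)}}$ is exponentially close to $\overline{hv_1^{(j)}}$, the $\overline{v_2^{(j)}}$ do too, giving $|\sqf(v_2^{(j)}, v_2^{(k)})| \ll e^{4\rho t}$. A secondary pigeon-hole arranges the $v_2^{(j)}$ to be pairwise distinct (hence nonparallel primitive integers, so $|\sqf(v_2^{(j)}, v_2^{(k)})| \geq 1$); the essential input is that the number of $v_1 \in S$ sharing a given $v_2$ is $O(1)$ under the failure of (2), because such $v_1$'s occupy a thin strip around the line through $h^{-1}v_2$ whose primitive integer content is small unless $h^{-1} v_2$ admits a short rational approximation --- a situation that, being uniform in $v_2$, would itself yield a rational approximation of $h$ contrary to the failure of (2).

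Finally, Lemma~\ref{lem: Mobius} applies with $s = t$ and $\eta = e^{-\rho t}$ (rescaled by a harmless constant to secure the strict inequality $\eta^A < e^{-s/100}$, borderline at $A\rho = 1/100$). All hypotheses hold: $1 \leq |\sqf| \ll \eta^{-4}$ on both triples, $|\mu_j| \gtrsim 1$, and $\|w^{(j)}\| \ll \eta^A e^{-s}$. The conclusion yields $Q \in \Mat_2(\Z)$ with $\|Q\| \leq \eta^{-100} = e^{100\rho t}$ and $\lambda \in \R$ with $\|h - \lambda Q\| \leq C'\eta^{A-50} \leq e^{-(A-100)\rho t}$, contradicting the failure of (2). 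The main obstacle is the secondary pigeon-hole --- bounding the multiplicity of the map $v_1 \mapsto v_2(v_1)$ via a geometry-of-numbers count of primitive lattice points in the relevant strip --- together with the tight parameter matching at $\rho = 1/(100A)$, which leaves essentially no slack in applying Lemma~\ref{lem: Mobius} and forces constants to be tracked with care.
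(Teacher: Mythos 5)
Your overall architecture --- arguing by contradiction, pigeonholing directions to extract three pairwise nearly parallel $v_1^{(j)}\in\pcal_t$ with $1\le|\sqf(v_1^{(j)},v_1^{(k)})|\ll\eta^{-4}$, transferring the near-parallelism to the witnesses $v_2^{(j)}$, and feeding the three pairs into Lemma~\ref{lem: Mobius} --- is the same as the paper's. The gap is in your treatment of the witnesses. First, with $s=t$ the hypothesis of Lemma~\ref{lem: Mobius} you must verify is $\|w^{(j)}\|\le C\eta^Ae^{-t}$, whereas (as you record in your first paragraph) the a priori bound is only $\|w^{(j)}\|\ll\eta^A/\|v_2^{(j)}\|$; so you need the \emph{selected} witnesses to satisfy $\|v_2^{(j)}\|\gg e^t$ up to an $\eta^{O(1)}$ factor, and nothing in your argument secures this --- in your final paragraph the factor $\|v_2\|^{-1}$ silently becomes $e^{-t}$. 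Second, your device for making the $v_2^{(j)}$ pairwise non-proportional, namely that under the failure of (2) each $v_2$ serves only $O(1)$ many $v_1\in S$, is false: for a fixed short $v_2$ the eligible $v_1$ fill a strip of width $\asymp\eta^A/\|v_2\|$ and length $e^t$ around the line $\R h^{-1}v_2$, and for a typical direction this strip contains on the order of $\eta^Ae^t/\|v_2\|$ (possibly nearly $e^{(1-A\rho)t}$) primitive vectors of norm $\asymp e^t$. This has nothing to do with (2): a single direction $h^{-1}v_2$ being well approximable is a one-point condition on the M\"obius map of $h$ and cannot produce $Q\in\Mat_2(\Z)$ with $\|h-\lambda Q\|$ small --- that is precisely why Lemma~\ref{lem: Mobius} needs three pairs --- so ``being uniform in $v_2$'' does not rescue the claim.

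Both problems are fixed by one counting step your proposal is missing: before pigeonholing, discard every good $v_1$ all of whose witnesses are short. For a fixed $v_2$ with $\|v_2\|\le\eta^2e^t$, either all integer points of the corresponding strip lie on a single line through the origin (at most two primitive ones), or by Minkowski the number of $v_1\in\pcal_t$ it serves is $\ll\eta^Ae^t/\|v_2\|$; summing over $\|v_2\|\le\eta^2e^t$ costs $\ll\eta^4e^{2t}+\eta^{A+2}e^{2t}=o(\eta e^{2t})$, so $\gg\eta e^{2t}$ good $v_1$ survive, each with a witness of norm $\ge\eta^2e^t$. For such witnesses $\|w\|\le\eta^{A-2}e^{-t}$ (apply Lemma~\ref{lem: Mobius} with $A-2$ in place of $A$; the conclusion is still $\le e^{-(A-100)\rho t}$ for large $t$), and two non-proportional $v_1,v_1'$ cannot share such a witness, since that would force $1\le|\sqf(v_1,v_1')|\ll\eta^{A-2}<1$; hence inside your arc of $\gtrsim\eta^{-3}$ surviving vectors you can pick three with pairwise non-proportional long witnesses, and the lemma applies. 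This is in substance the reduction the paper performs when it first prunes the set of good $v_1$ and then works only with pairs satisfying $|\mu|\asymp1$ and $\|w_{1,2}\|\ll\eta^Ae^{-t}$, i.e.\ with witnesses of norm comparable to $e^t$ --- exactly the property your selection must, but does not, guarantee.
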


\begin{proof}
For simplicity in the notation, let us write $\eta=e^{-\rho t}$. Let $A\geq10^3$, and assume that   
\begin{multline}\label{eq: part 1 fails}
\#\Bigl\{v_1\in\pcal_{t}: \exists v_2\in\pcal(t), |\sqf(g_1v_1,g_2v_2)|\leq \eta^A\Bigr\}>\\
E(\norm{g_1}\norm{g_2})^E\eta e^{2t}.
\end{multline}
We will show that if $E$ is large enough, then part~(2) holds.
\medskip

Let us write  
\[
h:=g_2^{-1}g_1=\begin{pmatrix} a & b \\ c & d\end{pmatrix}.
\] 
Then~\eqref{eq: part 1 fails} and the fact that for any $q \in \SL(2,\R)$, \  $\Delta(q) \in\SO(\sqf)$ imply that if $t$ is large enough, 
depending on $\norm{h}$, for $\gg\eta e^{2t}$ 
many $v_1=(\cox_1,\coy_1)\in\pcal_t$ both of the following hold 
\begin{itemize}
\item We have $\absolute{c\cox_1+d\coy_1}\geq \eta^2 e^t$. 
\item There exists at least one $(\cox_2,\coy_2)\in\pcal(t)$ so that
\be\label{eq: well approximation}
\absolute{\sqf(h(\cox_1,\coy_1), (\cox_2,\coy_2))}\leq \eta^{A}.
\ee
\end{itemize}

Moreover, the fact that there are $\gg\eta e^{2t}$ vectors satisfying these two 
conditions implies that there are $v_1, v_1', v_1''\in\pcal_t$ satisfying the above two conditions so that  
\be\label{eq: v1, v1', v1''}
1\leq\absolute{\sqf(v,w)}\ll \eta^{-4}, \qquad \text{for $v, w\in\{v_1,v_1',v_1''\}$.}
\ee 

Let us fix three vectors $v_1,v_1',v_1''$ satisfying~\eqref{eq: v1, v1', v1''}, and let $v_2, v_2',v_2''$ 
be the corresponding vectors in $\pcal(t)$ satisfying~\eqref{eq: well approximation}, respectively. Then  
\be\label{eq: well approximation'}
hv_1=\mu v_2 + w_{1,2}
\ee
where $\mu\in\R$ satisfies $\absolute{\mu}\asymp 1$ and $\norm{w_{1,2}}\ll \eta^Ae^{-t}$ (recall that the implicit constants in these inequalities are allowed to depend polynomially on $\norm{h}$). Similarly, 
\[
hv'_1=\mu' v'_2 + w'_{1,2}\quad\text{and}\quad hv''_1=\mu'' v''_2 + w''_{1,2}
\]  
where $\mu',\mu''\in\R$ satisfy $\absolute{\mu'},\absolute{\mu''}\asymp 1$ and 
$\norm{w'_{1,2}}, \norm{w''_{1,2}}\ll \eta^Ae^{-t}$. 

With this notation we have  
\be\label{eq: well approximation''}
h(v_1\; v'_1)=(v_2\; v'_2)\begin{pmatrix}\mu & 0\\ 0 &\mu'\end{pmatrix}+ O(\eta^Ae^{-t})
\ee 
and similarly for $v_1, v''_1$ and $v'_1, v''_1$. Thus by \eqref{eq: v1, v1', v1''} 
\be\label{eq: v2', v2''}
1\leq\absolute{\sqf(v_2,v'_2)},\absolute{\sqf(v_2,v''_2)},\absolute{\sqf(v'_2,v''_2)}\ll \eta^{-4}.
\ee

In view of~\eqref{eq: v1, v1', v1''},~\eqref{eq: well approximation'},~\eqref{eq: well approximation''} and~\eqref{eq: v2', v2''} the conditions in Lemma~\ref{lem: Mobius} hold. The claim thus follows from Lemma~\ref{lem: Mobius} so long as $t$ is large enough to account for the constant $C$ in that lemma.
\end{proof}

\subsection*{Proof of Proposition~\ref{prop: main}}
Recall that $g=(g_1,g_2)$. Put 
\[
x_i=g_i\SL_2(\Z),\qquad \text{for $i=1,2$}.
\]
Let $A_1\geq 10^4$, $0<\rho<10^{-4}$ (small), and $t\geq 1$ (large) be so that Lemma~\ref{lem: not many solutions} holds for these choices. 
Put $\delta=2\rho^2/A_1$, and define $\Theta_t(\delta)$ and $\Theta_t'(\delta)$ as in~\eqref{eq: def Theta-t} with $t$ and $\delta$ and $x_i$. 
That is,
\[
\Theta_t(\delta)=\Bigl\{\theta\in[0,2\pi]: \omega_2(a_t\rot_\theta x_2)^{1-2\delta}\leq \omega_1(a_t\rot_\theta x_1)\leq \omega_2(a_t\rot_\theta x_2)^{1+2\delta}\Bigr\},
\]
and $\Theta'_t(\delta)=[0,2\pi]\setminus \Theta_t(\delta)$.
 
Apply Lemma~\ref{lem: not many solutions} with $A=A_1$ and $\rho$. 
If part~(2) in that lemma holds, then part~(2) in Proposition~\ref{prop: main} holds and the proof is complete. 
Thus, assume for the rest of the argument that part~(1) in Lemma~\ref{lem: not many solutions} holds. We will show that part~(1) in the Proposition~\ref{prop: main} holds.

Motivated by the definition of $\tilde f$ and Lemma~\ref{lem: special subspace}, define
\be\label{eq: def tilde omega}
\tilde\omega(a_t\rot_\theta;g\Gamma)=\sup\Bigl\{\Bigl(\|a_t\rot_\theta g_1v_1\|\|a_t\rot_\theta g_2v_2\|\Bigr)^{-1}: (v_1,v_2)\in\pcal^2(g)\Bigr\}
\ee
where $\pcal$ is the set of primitive vectors in $\Z^2$ and $\pcal^2(g)$ denotes the set of $(v_1,v_2)\in\pcal^2$ so that $\{(g_1v_1,0), (0,g_2v_2)\}$ is not a spanning set for any $(\rho, A_1, t)$-special subspace of $g\Z^4$, see Definition~\ref{def: special}.

It follows from the definition that  
\be\label{eq: tilde omega and prod of omega-i}
\tilde\omega\Bigl(a_t\rot_\theta; g\Gamma\Bigr)\leq \omega_1(a_t\rot_\theta x_1)\omega_2(a_t\rot_\theta x_2).
\ee
Put 
$\mathcal B_t=\{\theta\in[0,2\pi]:\tilde\omega(a_t\rot_\theta;g\Gamma)\!<\!\omega_1(a_t\rot_\theta x_1)\omega_2(a_t\rot_\theta x_2)\}$.

By a variant of Schmidt's Lemma, see also~\cite[Lemma 3.1]{EMM-Upp}, and the definition of $\tilde f$, we have     
\be\label{eq: Schmidt lemma}
\tilde f(a_t\rot_\theta;g\Gamma)\ll \tilde\omega\Bigl(a_t\rot_\theta ;g\Gamma\Bigr). 
\ee
Put $\tilde{\mathcal C}_t=\{\theta\in[0,2\pi]: \tilde\omega(a_t\rot_\theta;g\Gamma)\geq e^{A_1\rho t}\}$.
In view of~\eqref{eq: Schmidt lemma} and with this notation, it suffices to show that  
\be\label{eq: integral of tilde omega}
\int_{\tilde{\mathcal C}_t}\tilde\omega(a_t\rot_\theta;g\Gamma)\diff\!\theta\ll e^{-\rho^2 t/A_1}.
\ee

\subsection*{Contribution of $\mathcal B_t$}
Recall that if $\omega(h\SL_2(\Z))\geq 2$ for some $h\in \SL_2(\R)$, then there is some $v_h\in\pcal$ so that 
\be\label{eq: only one short vector}
\|hv_h\|^{-1}=\omega(h\SL_2(\Z))\quad\text{and}\quad \text{$\|hv\|>1/2$ for all $v_h\neq v\in\pcal$}.
\ee 

Let $\theta\in\mathcal B_t$. By the definition of $\tilde\omega$, there exist $v_1, v_2\in\pcal$ so that 
\[
\tilde\omega(a_t\rot_\theta; g\Gamma)=\norm{a_t\rot_\theta g_1v_1}^{-1}\norm{a_t\rot_\theta g_2v_2}^{-1}. 
\]
Since $\tilde\omega(a_t\rot_\theta; g\Gamma)<\omega_1(a_t\rot_\theta g_1\Gamma_1)\omega_2(a_t\rot_\theta g_2\Gamma_2)$, we conclude that 
\[
\min\Bigl\{\norm{a_t\rot_\theta g_1v_1}^{-1},\norm{a_t\rot_\theta g_2v_2}^{-1}\Bigr\}\leq 2.
\]
Therefore, for all such $\theta$, we have 
\[
\tilde\omega(a_t\rot_\theta; g\Gamma)\leq 2\max\{\omega_1(a_t\rot_\theta g_1\Gamma_1), \omega_2(a_t\rot_\theta g_2\Gamma_2)\}.
\]
Thus using Lemma~\ref{lem: L2 bound SL2}, we have 
\be\label{eq: contribution of Bt}
\begin{aligned}
\int_{\mathcal B_t\cap \tilde{\mathcal C}_t}&\tilde\omega(a_t\rot_\theta; g\Gamma)\diff\!\theta\leq e^{-\frac{A_1\rho t}2}\int_{\mathcal B_t}\tilde\omega(a_t\rot_\theta; g\Gamma)^{3/2}\diff\!\theta\\
&\leq 2 e^{-\frac{A_1\rho t}2}\int_0^{2\pi}\omega_1(a_t\rot_\theta x_1)^{\frac32}+\omega_1(a_t\rot_\theta x_2)^{\frac32}\diff\!\theta\ll e^{-\frac{A_1\rho t}2}.
\end{aligned}
\ee

Let $\Theta_t(\theta)$ and $\Theta'_t(\delta)$ be as above, and put 
\[
\tilde{\mathcal C}_t(\delta):=\tilde{\mathcal C}_t\cap \mathcal B_t^\complement\cap \Theta_t(\delta)\quad\text{ and }\quad\tilde{\mathcal C}_t'(\delta):=\tilde{\mathcal C}_t\cap \mathcal B_t^\complement\cap \Theta_t'(\delta).
\]

We consider the contribution of these two sets to $\int\tilde\omega$ separately --- indeed, controling the contribution of $\tilde{\mathcal C}_t(\delta)$ occupies bulk of the proof.  

\subsection*{Contribution of $\tilde{\mathcal C}_t'(\delta)$} 
By Lemma~\ref{lem: alpha1 and alpha3}, for all $t$ large enough, we have 
\[
\int_{\Theta'_t(\delta)}\Bigl(\omega_1(a_t\rot_\theta x_1)\omega_2(a_t\rot_\theta x_2)\Bigr)^{1+\frac12\delta}\diff\!\theta\ll 1
\]
From this and~\eqref{eq: tilde omega and prod of omega-i}, we conclude that 
\be\label{eq: contribution of C'}
\begin{aligned}
\int_{\tilde{\mathcal C}'_t(\delta)}&\tilde\omega(a_t\rot_\theta; g\Gamma)\diff\!\theta\leq \int_{\tilde{\mathcal C}'_t(\delta)}\omega_1(a_t\rot_\theta x_1)\omega_2(a_t\rot_\theta x_2)\diff\!\theta\\
&\leq e^{-\delta \rho A_1t/2} \int_{\Theta'_t(\delta)}\Bigl(\omega_1(a_t\rot_\theta x_1)\omega_2(a_t\rot_\theta x_2)\Bigr)^{1+\frac12\delta}\diff\!\theta\ll e^{-\rho^3 t}.
\end{aligned}
\ee

\subsection*{Contribution of $\tilde{\mathcal C}_t(\delta)$} 
Recall that
\[
\Theta_t(\delta)=\Bigl\{\theta\in[0,2\pi]: \omega_2(a_t\rot_\theta x_2)^{1-2\delta}\leq \omega_1(a_t\rot_\theta x_1)\leq \omega_2(a_t\rot_\theta x_2)^{1+2\delta}\Bigr\},
\]
and $\tilde{\mathcal C}_t(\delta)=\tilde{\mathcal C}_t\cap \mathcal B_t^\complement\cap\Theta_t(\delta)$. Note that the vectors which contribute to 
\be\label{eq: int tilde omega Ct}
\int_{\tilde{\mathcal C}_t(\delta)}\tilde\omega(a_t\rot_\theta ;g\Gamma)\diff\!\theta
\ee
satisfy $\Bigl\{(g_1v_1,g_2v_2): \norm{g_1v_1}, \norm{g_2v_2}\leq e^t\Bigr\}$.
It is more convenient to consider the cases $\norm{g_1v_1}\geq \norm{g_2v_2}$ and $\norm{g_1v_1}\leq \norm{g_2v_2}$ separately. 
As the arguments are similar in both cases, we assume $\norm{g_1v_1}\geq \norm{g_2v_2}$ for the rest of the proof.  

Recall our notation: for $t\geq 1$ 
\[
\pcal_t=\{v\in \pcal: e^{t-1}\leq \norm{v}< e^t\},
\]
and $\pcal(t)=\{v\in \pcal: \norm{v}\leq e^t\}$.

For every $n\in \N$ with $n\leq t+\log\|g_1\|+1=:t_1$, 
we investigate the contribution of $\pcal_n$ to~\eqref{eq: int tilde omega Ct}. 
For any $v_1\in \pcal_n$, let 
\[
I_{v_1}=\{\theta\in [0,2\pi]: \norm{a_t\rot_\theta g_1v_1}\leq 1/10\}.
\] 
Then the intervals $I_{v_1}$ are disjoint. 
Let $\tilde\pcal_n=\{v_1\in \pcal_n: I_{v_1}\cap\tilde{\mathcal C}_t(\delta)\neq \emptyset\}$.

Fix some $n\in\N$, $n\leq t_1$. Let $v_1\in\tilde\pcal_n$, and let $\theta\in I_{v_1}\cap\tilde{\mathcal C}_t(\delta)$. 
Then there exists $v_2\in\pcal$ so that 
\[
\tilde\omega(a_t\rot_\theta; g\Gamma)=\frac{1}{\norm{a_t\rot_\theta g_1v_1}\norm{a_t\rot_\theta g_2v_2}}. 
\]
Since $\theta\in\mathcal B_t$, we have 
$\tilde\omega(a_t\rot_\theta; g\Gamma)=\omega_1(a_t\rot_\theta x_1)\omega_2(a_t\rot_\theta x_2)$. 
Thus 
\be\label{eq: the is in Bt use}
\omega_i(a_t\rot_\theta x_i)=\norm{a_t\rot_\theta g_iv_i}^{-1}\quad\text{for $i=1,2$}. 
\ee

In view of~\eqref{eq: the is in Bt use}, and the definitions of $\mathcal B_t$ and $\Theta_t(\theta)$, thus  
\be\label{eq: L2 + epsilon : Theta-t}
\int_{\tilde{\mathcal C}_t(\delta)}\tilde\omega(a_t\rot_\theta ;g\Gamma)\diff\!\theta
\leq\sum_n\sum_{\tilde\pcal_n} \int_{I_{v_1}} \|a_t\rot_\theta g_1v_1\|^{-2-2\delta}. 
\ee

We also make some observations. 
Fix some $n\in\N$, $n\leq t_1$. Let $v_1\in\tilde\pcal_n$ and $\theta\in I_{v_1}\cap\tilde{\mathcal C}_t(\delta)$, and let $v_2\in\pcal$ be so that~\eqref{eq: the is in Bt use} holds. That is, 
$\omega_i(a_t\rot_\theta x_i)=\norm{a_t\rot_\theta g_iv_i}^{-1}$, for $i=1,2$,
and  
\[
\tilde\omega(a_t\rot_\theta; g\Gamma)=\Bigl(\norm{a_t\rot_\theta g_1v_1}\norm{a_t\rot_\theta g_2v_2}\Bigr)^{-1}.
\]
Since $\theta\in\tilde{\mathcal C_t}$, we have $\tilde\omega(a_t\rot_\theta;g\Gamma)\geq e^{A_1\rho t}$. This gives
\[
\norm{a_t\rot_\theta g_1v_1}\norm{a_t\rot_\theta g_2v_2}\leq e^{-A_1\rho t},
\] 
which implies that 
\[
\absolute{\sqf\Bigl(\darot(g_1v_1,g_2v_2)\Bigr)}=\absolute{\sqf(a_t\rot_\theta g_1v_1, a_t\rot_\theta g_2v_2)}\leq e^{-A_1\rho t}.
\]
Since $\darot\in\SO(\sqf)$, we conclude from the above that 
\be\label{eq: gv1 v2 are almost parallel}
\sqf(g_1v_1, g_2v_2)\leq  e^{-A_1\rho t}.
\ee
We claim: 
\be\label{eq: v1 is not too short}
\|g_1v_1\|\geq e^{\rho t}. 
\ee
Indeed if $\|g_1v_1\|< e^{\rho t}$, then since $\|g_2v_2\|\leq \|g_1v_1\|$, it follows  from~\eqref{eq: gv1 v2 are almost parallel} that 
$\{(g_1v_1,0), (0,g_2v_2)\}$ spans a $(\rho, A_1, t)$-special subspace. This contradicts the definition of $\tilde\omega$ and establishes~\eqref{eq: v1 is not too short}. 

\medskip 

Let us now return to estimating~\eqref{eq: L2 + epsilon : Theta-t}; we will estimate the sum on the right side of~\eqref{eq: L2 + epsilon : Theta-t} using the following elementary fact.

\begin{sublemma}
Let $t>0$, and let $w\in\R^2$ be a non-zero vector. Then  
\[
\ave\|a_t\rot_\theta w\|^{-2-2\delta}\diff\!\theta\leq \hat Ce^{4\delta t}\|w\|^{-2-2\delta}
\]
where $\hat C$ is absolute. 
\end{sublemma}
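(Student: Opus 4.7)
The plan is to reduce the estimate to a one-variable integral that can be handled by a standard substitution. First, since the integral is $2\pi$-periodic and rotationally invariant in the sense that replacing $w$ by $\rot_{\theta_0}w$ corresponds to a shift $\theta\mapsto \theta-\theta_0$ in the integrand, I may assume without loss of generality that $w=\|w\|e_1$. A direct calculation then gives
\[
\|a_t\rot_\theta w\|^2 = \|w\|^2\bigl(e^{2t}\cos^2\theta + e^{-2t}\sin^2\theta\bigr),
\]
so it suffices to bound
\[
I(t,\delta):=\frac{1}{2\pi}\int_0^{2\pi}\bigl(e^{2t}\cos^2\theta + e^{-2t}\sin^2\theta\bigr)^{-1-\delta}\,\diff\!\theta
\]
by a constant multiple of $e^{4\delta t}$.

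Using four-fold symmetry, I restrict integration to $[0,\pi/2]$ and apply the substitution $u = e^{-2t}\tan\theta$, which maps $[0,\pi/2)$ bijectively onto $[0,\infty)$. A short computation gives
\[
e^{2t}\cos^2\theta + e^{-2t}\sin^2\theta = \frac{e^{2t}(u^2+1)}{1+e^{4t}u^2},\qquad \diff\!\theta = \frac{e^{2t}\,\diff\!u}{1+e^{4t}u^2},
\]
so that, after simplification,
\[
I(t,\delta) \ll e^{-2\delta t}\int_0^\infty (u^2+1)^{-1-\delta}(1+e^{4t}u^2)^{\delta}\,\diff\!u.
\]
The factor $(1+e^{4t}u^2)^\delta$ encodes the anisotropy coming from $a_t$, and the substitution is chosen precisely so that this is the only residual source of $t$-dependence.

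To finish, I use the elementary inequality $(a+b)^\delta\le a^\delta+b^\delta$ valid for $0\le\delta\le 1$ and $a,b\ge 0$, which gives $(1+e^{4t}u^2)^\delta\le 1+e^{4\delta t}u^{2\delta}$. Splitting accordingly, the integral reduces to the two convergent pieces $\int_0^\infty(u^2+1)^{-1-\delta}\,\diff\!u$ and $\int_0^\infty u^{2\delta}(u^2+1)^{-1-\delta}\,\diff\!u$, each bounded by an absolute constant (the integrands decay like $u^{-2-2\delta}$ and $u^{-2}$ at infinity, respectively, uniformly for $\delta$ bounded away from $1/2$). This yields $I(t,\delta)\ll e^{2\delta t}$, which is in fact stronger than the stated bound $\hat C e^{4\delta t}$. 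The argument is a direct calculation and presents no real obstacle; the only judicious choice is the substitution $u=e^{-2t}\tan\theta$, which is the one naturally adapted to the $a_t$-geometry.
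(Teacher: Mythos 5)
Your proof is correct, and it goes by a somewhat different route than the paper's. The paper, after normalizing $w=(0,1)$, simply splits $[0,2\pi]$ into a window of length $\asymp e^{(-2+2\delta)t}$ around the angles where $r_\theta w$ lies in the contracted direction (bounding the integrand there trivially by $e^{(2+2\delta)t}$) and its complement, where it uses $\|a_t\rot_\theta w\|\gg e^{t}\,d(\theta,\text{bad angles})$; this yields $e^{4\delta t}+e^{-4\delta^2 t}\ll e^{4\delta t}$. You instead compute $\|a_t\rot_\theta w\|^2=\|w\|^2(e^{2t}\cos^2\theta+e^{-2t}\sin^2\theta)$ exactly and perform the substitution $u=e^{-2t}\tan\theta$, reducing everything to $e^{-2\delta t}\int_0^\infty(1+u^2)^{-1-\delta}(1+e^{4t}u^2)^{\delta}\,\diff\!u$ and then to two uniformly bounded integrals via $(a+b)^{\delta}\le a^{\delta}+b^{\delta}$. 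Your computation is correct (I checked the Jacobian and the identity for the norm), and it buys the sharp exponent $e^{2\delta t}$, whereas the paper's cruder two-region split gives only $e^{4\delta t}$ but avoids any change of variables; either bound suffices for the application, where $\delta$ is tiny.

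Two minor remarks, neither a gap. The inequality $(a+b)^{\delta}\le a^{\delta}+b^{\delta}$ needs $0\le\delta\le1$, which you state and which holds in the paper's range ($\delta\le 1/10$); it would be worth saying explicitly that the sublemma is only invoked for such $\delta$. Also, your parenthetical about uniformity "for $\delta$ bounded away from $1/2$" is unnecessary (and slightly misleading): for all $\delta\in[0,1]$ one has $u^{2\delta}(1+u^2)^{-1-\delta}\le\min(1,u^{-2})$, so $\int_0^\infty u^{2\delta}(1+u^2)^{-1-\delta}\,\diff\!u\le 2$, and likewise $\int_0^\infty(1+u^2)^{-1-\delta}\,\diff\!u\le\pi/2$, so both constants are absolute on the whole range $0\le\delta\le1$.
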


First note that~\eqref{eq: gv1 v2 are almost parallel} and the fact that part~1 in Lemma~\ref{lem: not many solutions} holds 
imply that exist $t_0$ and $C$ so that for all $t_0\leq n\leq t_1$, we have  
\be\label{eq: number of tilde Delta n}
\#\tilde\pcal_n\leq C e^{(2-\rho)n}.
\ee 
Also recall from~\eqref{eq: v1 is not too short} that $\norm{g_1v_1}\geq e^{\rho t}$, which in particular implies that $\norm{v_1}\gg e^{\rho t}$. Since $v_1\in\pcal_n$, we conclude that $n\geq \rho t+O(1)$. 
Thus~\eqref{eq: number of tilde Delta n} and the Sublemma imply that  
\be\label{eq: most vectors}
\begin{aligned}
\sum_{v_1\in\tilde\pcal_n}\int_{I_{v_1}}\|a_t\rot_\theta g_1v_1\|^{-2-2\delta}\diff\!\theta&\ll e^{(2-\rho)n} e^{4\delta t}e^{(-2-2\delta)n}\\
&\ll e^{-\rho^2 t} e^{4\delta t}\leq e^{-2\delta t}
\end{aligned}
\ee
in the last inequality, we used $\rho^2=A_1\delta/2\geq 100\delta$ and assumed $t$ is large.  

We now sum over all $n\leq t_1$ and get that 
\[
\sum_n\sum_{\tilde\pcal_n} \int_{I_{v_1}} \|a_t\rot_\theta g_1v_1\|^{-2-2\delta}\ll te^{-2\delta t}\ll e^{-\delta t}.
\]
This and~\eqref{eq: L2 + epsilon : Theta-t} complete the proof in this case. 

In combination with \eqref{eq: contribution of C'} and~\eqref{eq: contribution of Bt}, the proof is complete. 
\qed

\begin{proof}[Proof of the Sublemma]
Without loss of generality, we may assume $w=(0,1)$. Put 
\[
I=\Bigl[e^{(-2+2\delta)t}, 2\pi-e^{(-2+2\delta)t}\Bigr]\qquad\text{and}\qquad I'=[0,2\pi]\setminus I.
\]
Then 
\begin{align*}
\ave \frac{\diff\!\theta}{\|a_t\rot_\theta w\|^{2+2\delta}}&\ll \int_{I'}\frac{\diff\!\theta}{\|a_t\rot_\theta w\|^{2+2\delta}}+
\int_I\frac{\diff\!\theta}{\|a_t\rot_\theta w\|^{2+2\delta}}\\
&\ll e^{(-2+2\delta)t} e^{(2+2\delta)t}+\int_I\frac{\diff\!\theta}{\|a_t\rot_\theta w\|^{2+2\delta}}\\
&\leq e^{4\delta t}+\int_I\frac{\diff\!\theta}{\|a_t\rot_\theta w\|^{2+2\delta}}.
\end{align*}
We now compute the integral over $I$. Note that $\|a_t\rot_\theta w\|^{2+2\delta}\gg e^{(2+2\delta)t}\theta^{2+2\theta}$. Therefore, 
\begin{align*}
\int_I\frac{\diff\!\theta}{\|a_t\rot_\theta w\|^{2+2\delta}}&\ll e^{(-2-2\delta)t}\int_I\theta^{-2-2\delta}\diff\!\theta\\
&\ll e^{(-2-2\delta)t}e^{(1+2\delta)(2-2\delta)t}\ll e^{-4\delta^2 t}.
\end{align*}
The proof is complete. 
\end{proof}

We end this section with the proof of Lemma~\ref{lem: gen bd for int hat f}.

\begin{proof}[Proof of Lemma~\ref{lem: gen bd for int hat f}]
We begin with part~(1). Recall that $f_i$ is the characteristic function of $\{w\in\R^2: \norm{w}\leq R\}$, and let $f=f_1f_2$. 
Again by a variant of Schmidt's Lemma, we have 
\[
\hat f(\darot g\Gamma')\leq \omega_1(g_1\SL_2(\Z))\omega_2(g_2\SL_2(\Z))
\]
Let $\delta=\eta/10$. As it was done in~\eqref{eq: def Theta-t}, define 
\[
\Theta_t(\delta)\!=\!\Bigl\{\theta\in[0,2\pi]\!:\! \omega_2(a_t\rot_\theta x_2)^{1-2\delta}\leq \omega_1(a_t\rot_\theta x_1)\leq \omega_2(a_t\rot_\theta x_2)^{1+2\delta}\Bigr\}
\]
and let $\Theta_t'(\delta)=[0,2\pi]\setminus\Theta_t(\delta)$ where $x_i=g_i\SL_2(\Z)$.  
Then by Lemma~\ref{lem: alpha1 and alpha3}, we have for all $t\geq t(\delta)$ 
\be\label{eq: upp bd int f hat Theta'}
\int_{\Theta'_t(\delta)} \hat f(\darot g\Gamma')\diff\!\theta\leq \int_{\Theta'_t(\delta)} \Bigl(\omega_1(a_t\rot_\theta x_1)\omega_2(a_t\rot_\theta x_2)\Bigr)\diff\!\theta\ll1
\ee
the implied constant depends polynomially on the injectivity radius of $g\Gamma'$.
 
We now find an upper bound for the integral over $\Theta_t(\delta)$:
\[
\int_{\Theta_t(\delta)} \hat f(\darot g\Gamma')\diff\!\theta\leq \int \omega_1(a_tr_\theta x_1)^{2+2\delta}\diff\!\theta
\]
This, the sublemma, and standard arguments (which simplify significantly thanks to~\eqref{eq: only one short vector}), see e.g.~\cite{EskinMozes-MF}, imply that  
\[
\int_{\Theta_t(\delta)} \hat f(\darot g\Gamma')\diff\!\theta\ll e^{4\delta t}
\]
The claim in part~(1) of the lemma follows. 

We now turn to the proof of part~(2). 
Let $(v_1,0)$ and $(0,v_2)$ be as in the statement. 
For $i=1,2$ let $w_i=g_iv_i$. By a variant of Schmidt's Lemma,
\be\label{eq: upp bd int f hat schmidt}
\hat f(\theta)\leq \norm{a_t r_\theta w_1}^{-1} \norm{a_t r_\theta w_2}^{-1}. 
\ee 
For $i=1,2$, set  
\[
I_i=\Bigl\{\theta: R^{-1}e^{-\eta t}/10\leq \norm{a_t r_\theta w_i}\Bigr\}
\]
If $\theta\not\in I_1\cap I_2$, then $\hat f(\theta)>e^{\eta t}$. 
This,~\eqref{eq: upp bd int f hat schmidt}, and the definition of $\mathcal C_L$ imply 
\[ 
\int_{\mathcal C_L}\hat f(\theta)\leq \int_{I_1\cap I_2} \frac{1}{\norm{a_t r_\theta w_1}\norm{a_t r_\theta w_2}}.
\]
Thus, using Cauchy-Schwarz inequality, we need to find an upper bound for 
\[ 
\biggl(\int_{I_1} \frac{\diff\!\theta}{\norm{a_t r_\theta w_1}^{2}}\biggr)^{1/2} \biggl(\int_{I_2} \frac{\diff\!\theta}{\norm{a_t r_\theta w_2}^{2}}\biggr)^{1/2}.
\]

The computation is similar to the one in the proof of the sublemma. 
Indeed, we may assume $w_i=(0,1)$; then there is $R^{-1}\ll c<1$ so that 
\[
I_i\subset [ce^{-(1+\eta)t}, 2\pi-ce^{-(1+\eta)t}].
\] 
From this, we conclude that 
\[
\int_{I_i} \frac{\diff\!\theta}{\norm{a_t r_\theta w_i}^{2}}\ll e^{(-1+\eta)t},
\]
as it was claimed. 
\end{proof}


\section{Proof of Theorem~\ref{thm: equi of f hat}}\label{sec: proof of equi}
In this section, we will prove Theorem~\ref{thm: equi of f hat}. The proof combines a lower bound estimate, which will be proved using Theorem~\ref{thm: r effective equid}, with an upper bound estimate, which follows from Proposition~\ref{prop: main}, as we now explicate.

\begin{proof}[Proof of Theorem~\ref{thm: equi of f hat}]
Recall that $f_i\in C_c^\infty(\R^2)$, and $f$ is defined on $\R^4$ by $f(w_1,w_2)=f_1(w_1)f_2(w_2)$. We put
\be\label{eq: def hat f '}
\hat f(g'\Gamma')=\sum_{v\in g'\Lambda_{\rm nz}} f(v)
\ee
where  
$\Lambda = \{(v_1+v_2, \omega(v_1-v_2)): v_1,v_2\in \Z^2\}\subset \R^4$,
\[
\Gamma'=\{(\gamma_1,\gamma_2)\in\SL_2(\Z)\times\SL_2(\Z): \gamma_1 \equiv \omega \gamma_2 \omega \pmod{2}\}
\]
stabilizes $\Lambda$, and $g'=(g_1',g_2')\in G$. We also put $X=G/\Gamma'$.

Let $A$ and $\rho$ be as in the statement, and let $t>0$ be a parameter which is assumed to be large. Let $\hat A$ be a constant which will be explicated later, and let $g=(g_1,g_2)\in G$ satisfy the following:   
for every $Q\in\Mat_2(\Z)$ with $e^{\rho t/\hat A}\leq \norm{Q}\leq e^{\rho t}$ and all $\lambda\in\R$ we have  
\be\label{eq: far from rational proof stronger}
\norm{g_2^{-1}g_1-{\lambda}Q}>\norm{Q}^{-A/1000}.
\ee

We claim that~\eqref{eq: far from rational proof stronger} implies the following: 

\begin{sublemma} 
Let $g=(g_1, g_2)$ satisfy~\eqref{eq: far from rational proof stronger}.
There exists $A_1\geq \max(4D,A)$, where $D$ is as in Theorem~\ref{thm: r effective equid} so that the following holds. 
For all $t$ so that $t>4D\log t$ and for every $x\in X$ with $\vol(Hx)\leq e^{\rho t/A_1}$, we have 
\[
d(g\Gamma', x)> e^{-t/2}.
\]
\end{sublemma}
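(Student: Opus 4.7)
The plan is to argue by contradiction. Suppose there exists $x\in X$ with $\vol(Hx)\leq e^{\rho t/A_1}$ and $d(g\Gamma', x)\leq e^{-t/2}$. Choose a representative $g'=(g_1',g_2')$ of $x$ with $\|g_i'-g_i\|\ll e^{-t/2}$, where the implied constants are polynomial in $\|g_i\|$. The periodicity of $Hx$ means $\stab_H(x)$ is a lattice in $H=\Delta(\SL_2(\R))$. For every $\Delta(h_0)\in\stab_H(x)$, the relation $\Delta(h_0)g'\in g'\Gamma'$ gives $(g_i')^{-1}h_0 g_i'\in\SL_2(\Z)$ for $i=1,2$. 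Setting $h:=(g_2')^{-1}g_1'$, the lattice $\Lambda_1:=(g_1')^{-1}\Delta^{-1}(\stab_H(x))g_1'\subset\SL_2(\Z)$ satisfies $h\Lambda_1 h^{-1}\subset\SL_2(\Z)$, so $h$ lies in the commensurator of $\SL_2(\Z)$ inside $\SL_2(\R)$, namely $\PGL_2^+(\Q)$. Writing $h=\lambda Q$ with $Q\in\Mat_2(\Z)$ primitive, $\det Q>0$, and $\lambda=(\det Q)^{-1/2}$ is the key structural step.

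Next, I would convert the volume bound into arithmetic bounds on $Q$. The index $[\SL_2(\Z):\Lambda_1]$ is comparable to $\vol(Hx)$, and since $\Lambda_1\subset\SL_2(\Z)\cap Q^{-1}\SL_2(\Z)Q$, whose index in $\SL_2(\Z)$ is at least of order $\det Q$ (this is essentially the degree of the Hecke correspondence attached to $Q$), we deduce $\det Q\ll\vol(Hx)\leq e^{\rho t/A_1}$. Combined with $\|h\|\ll\|g\|^2$ and the identity $\|Q\|=\sqrt{\det Q}\cdot\|h\|$, this gives
\[
\|Q\|\ll\|g\|^2\, e^{\rho t/(2A_1)}.
\]

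Finally, I would invoke the Diophantine hypothesis \eqref{eq: far from rational proof stronger}. From $\|g_i'-g_i\|\ll e^{-t/2}$ we get
\[
\|g_2^{-1}g_1-\lambda Q\|=\|g_2^{-1}g_1-(g_2')^{-1}g_1'\|\ll e^{-t/2}.
\]
Since $\|Q\|$ may lie below the range $[e^{\rho t/\hat A},e^{\rho t}]$ on which \eqref{eq: far from rational proof stronger} is stated, I rescale: choose an integer $n\geq 1$ so that $n\|Q\|\in[e^{\rho t/\hat A},2e^{\rho t/\hat A}]$, which is possible once $A_1$ is chosen large enough that $\|Q\|\leq e^{\rho t(1-1/\hat A)}$. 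Applying \eqref{eq: far from rational proof stronger} to the pair $(nQ,\lambda/n)$ yields
\[
\|g_2^{-1}g_1-\lambda Q\|>\|nQ\|^{-A/1000}\gg e^{-\rho t A/(1000\hat A)}.
\]
Provided $\hat A$ is chosen (as a function of $A$) sufficiently large that $\rho A/(1000\hat A)<1/4$, the two bounds are incompatible for $t$ large, producing the contradiction.

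The main obstacle is the passage from the orbit-volume bound to the norm bound on the commensurator element $Q$, which relies on the Hecke-theoretic interpretation of closed $H$-orbits in $\SL_2(\R)\times\SL_2(\R)/\Gamma'$. A subsidiary issue is the rescaling step: since the volume hypothesis does not prevent $\|Q\|$ from being much smaller than $e^{\rho t/\hat A}$ (for example $Q=I$ corresponds to closed orbits with $g_1'=g_2'$), the Diophantine condition must be applied to an integer multiple $nQ$, and one must verify that the resulting lower bound is still strong enough to defeat the distance bound $e^{-t/2}$.
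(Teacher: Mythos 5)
Your argument is correct and follows the same overall strategy as the paper's proof (contradiction; the closed $H$-orbit forces $(g_2')^{-1}g_1'$ into the commensurator of $\SL_2(\Z)$, hence equal to $\lambda Q$ with $Q\in\Mat_2(\Z)$; the orbit-volume bound controls $Q$; the Diophantine hypothesis \eqref{eq: far from rational proof stronger} then contradicts $d(g\Gamma',x)\leq e^{-t/2}$). Where you genuinely diverge is in the middle step: the paper first uses non-divergence to replace $x$ by a representative $(h_1,h_2)$ with $\norm{h_i}\ll1$ and then quotes \cite[Lemma 16.2]{LMW22} for a two-sided polynomial comparison $\norm{Q'}^{A_2}\ll\vol(Hx)\ll\norm{Q'}^{A_3}$, finally conjugating back by elements of $\Gamma'$; you instead work directly with the representative near $g\Gamma'$ and prove the only inequality actually needed, $\det Q\ll\vol(Hx)$, by identifying $\vol(Hx)$ with $[\SL_2(\Z):\Lambda_1]$ and using $\Lambda_1\subset\SL_2(\Z)\cap Q^{-1}\SL_2(\Z)Q$, whose index is the Hecke degree $\psi(\det Q)\geq\det Q$ for primitive $Q$; the norm bound then comes from $\norm{Q}=(\det Q)^{1/2}\norm{h}$. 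This is more self-contained (no non-divergence input, no external lemma) at the cost of constants depending polynomially on $\norm{g_i}$, which is harmless since $t$ is allowed to be large depending on $\log\norm{g_i}$. You also make explicit the rescaling of $Q$ needed to enter the range of \eqref{eq: far from rational proof stronger}, which the paper only mentions in the proof of Lemma~\ref{lem: exceptional subspaces t-ell}.

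Two small points to tighten. First, the condition you state for the rescaling (``possible once $\norm{Q}\leq e^{\rho t(1-1/\hat A)}$'') is not the right dichotomy: an integer multiple $nQ$ can be placed in $[e^{\rho t/\hat A},2e^{\rho t/\hat A}]$ only when $\norm{Q}\leq 2e^{\rho t/\hat A}$; in the complementary case $\norm{Q}\in(e^{\rho t/\hat A},e^{\rho t}]$ (which your bound $\norm{Q}\ll\norm{g}^2e^{\rho t/(2A_1)}$ guarantees for large $t$) no rescaling is needed and \eqref{eq: far from rational proof stronger} applies directly, giving the even stronger lower bound $\gg\norm{g}^{-A/500}e^{-\rho t/2000}$ since $A_1\geq A$. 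Either branch beats $e^{-t/2}$, so the argument closes; just phrase it as this case distinction. Second, the requirement ``$\hat A$ large enough that $\rho A/(1000\hat A)<1/4$'' should be checked against the $\hat A$ actually used, since in the sublemma $\hat A$ enters through the hypothesis; this is automatic because in the proof of Theorem~\ref{thm: equi of f hat} one has $\hat A\geq A$ and $\rho\leq10^{-4}$, so the condition holds with much room to spare.
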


We first assume the sublemma and complete the proof of the theorem. In view of the sublemma, part~(1) in Theorem~\ref{thm: r effective equid} holds with $R=e^{\rho t/A_1}$ and $t$. Indeed, $D\rho/A_1\leq 1/4$ and $t^D\leq e^{t/4}$, which imply   
\[
R^Dt^D e^{-t}= e^{D\rho t/A_1} t^D e^{-t}\leq e^{-t/2};
\]
hence, part~(2) in Theorem~\ref{thm: r effective equid} cannot hold. 

For every $S$, let $1_{X_S}\leq \varphi_S\leq 1_{X_{S+1}}$ be a smooth function with $\Sob(\varphi_S)\ll S^\star$, where 
\[
X_\bullet=\{x=(x_1, x_2)\in X: \max(\omega_1(x_1), \omega_2(x_2))\leq \bullet\},
\]
see~\eqref{eq: def omegai} --- since $\Gamma'$ is a finite index subgroup of $\SL_2(\Z)\times\SL_2(\Z)$ this is well-defined.
Put $\hat f_S=\varphi_S\hat f$; we let $N$ be so that $\Sob(\hat f_S)\ll S^N\Sob(f)$. 

Put $\eta=\kappa_0\rho/(2NA_1)$, where $\kappa_0$ is as in Theorem~\ref{thm: r effective equid}. We will show the claim in the theorem holds with 
\[
\hat A= 3NAA_1/\kappa_0, \quad \delta_1=\eta,\quad\text{and}\quad \delta_2=\eta^3/A^3.
\]
First note that  
\be\label{eq: delta1 rho cond}
\rho/\hat A=\kappa_0\rho/(3NAA'_1)\leq \eta/A=\delta_1/A\leq \rho/100.
\ee

We now turn to the rest of the argument. Apply Lemma~\ref{lem: special subspace} with $(g_1, g_2)$ and the triple $(\eta/A,A, t)$. In view of~\eqref{eq: delta1 rho cond} and~\eqref{eq: far from rational proof stronger}, Lemma~\ref{lem: special subspace} implies that there are at most two $(\eta/A,A, t)$-special subspaces. 

Denote these subspaces by $L$ and $L'$ if they exist. For every $\theta\in[0,2\pi]$, we write 
\[
\hat f(\Delta(a_t\rot_\theta)g\Gamma')=\hat f_S(\Delta(a_t\rot_\theta)g\Gamma')+\hat f_{\rm cusp}(\Delta(a_t\rot_\theta)g\Gamma')+ \hat f_{\rm sp}(\Delta(a_t\rot_\theta)g\Gamma')
\]
where $\hat f_S=\varphi_S\hat f$, $\hat f_{\rm cusp}$ is the contribution of $g\Lambda_{\rm nz}\setminus(L\cup L')$ to $\hat f-\hat f_S$, and $\hat f_{\rm sp}$ is the contribution of $g\Lambda_{\rm nz}\cap (L\cup L')$ to $\hat f-\hat f_S$.

By Theorem~\ref{thm: r effective equid}, applied with $R=e^{\rho t/A'}$, 
for any smooth function $\xi$ on $[0,2\pi]$ we have 
\begin{multline}\label{eq: proof main term}
\biggl|\ave \hat f_S(\darot g\Gamma')\xi(\theta)\diff\!\theta- \ave\xi\diff\!\theta\int_X\hat f_S\diff\!m_X\biggr|\ll\\ 
\Sob(\hat f_S)\Sob(\xi)e^{-\kappa_0 \rho t/A'}\ll S^N\Sob(f) \Sob(\xi)e^{-\kappa_0 \rho t/A'}.
\end{multline}
If we choose $S=e^{\eta t}=e^{\kappa_0 \rho t/(2NA')}$, 
the above is $\ll \Sob(f)\Sob(\xi)e^{-\eta t/2}$. 

Moreover, by Lemma~\ref{lem: L2 bound SL2} applied with $p=3/2$ and the Chebyshev's inequality, we have 
\be\label{eq: lem L2 bound SL2 used}
\int_{\{\theta: \darot g\Gamma'\notin X_S\}} S \diff\!\theta\ll S^{-3/2} S=S^{-1/2}.
\ee
This and~\eqref{eq: proof main term}, reduce the problem to investigating the integral of $\hat f-\hat f_S=\hat f_{\rm cusp}+\hat f_{\rm sp}$ over 
$\hat{\mathcal C}:=\{\theta\in[0,2\pi]: \hat f-\hat f_S\geq S\}$.

Let $\tilde f$ be as in~\eqref{eq: def tilde f} with $\eta/A$, $A$, and $t$. That is:
\[
\tilde f(h;g\Gamma)=\sum_{v\in {\Nt}(g\Z^4)} f(\Delta(h)v)
\]
where ${\Nt}\Bigl(g\Z^4\Bigr)$ denotes the set of 
vectors in $g\Z^4$ not contained in any $(\eta/A,A, t)$-special subspaces and also not contained in $\R^2\times\{0\}\cup\{0\}\cup\R^2$. 

Let $\tilde{\mathcal C}_t=\{\theta\in[0,2\pi]: \tilde f(a_t\rot_\theta; g\Gamma)\geq 
e^{\eta t}=S\}$. 
By the definitions, 
\[
\int_{\hat{\mathcal C}}\hat f_{\rm cusp}(\Delta(a_t\rot_\theta)g\Gamma)\xi(\theta)\diff\!\theta\leq \norm{\xi}_{\infty}\int_{\tilde{\mathcal C}_t}\tilde f(a_t\rot_\theta;g\Gamma')\diff\!\theta.
\]
In view of~\eqref{eq: delta1 rho cond}, $e^{100\eta t/A}$ is in the range where~\eqref{eq: far from rational proof stronger} holds, thus Proposition~\ref{prop: main}, applied with $\eta/A$ and $A$, implies 
\[
\int_{\tilde{\mathcal C}_t}\tilde f(\Delta(a_t\rot_\theta)g\Gamma')\diff\!\theta\ll e^{-\eta^3 t/A^3}.
\]
From these two, we conclude that 
\be\label{eq: estimate for f hat cusp}
\int_{\hat{\mathcal C}}\hat f_{\rm cusp}(\Delta(a_t\rot_\theta) g\Gamma)\diff\!\theta\ll \norm{\xi}_\infty e^{-\eta^3 t/A^3}.
\ee

In view of~\eqref{eq: proof main term},~\eqref{eq: lem L2 bound SL2 used} and~\eqref{eq: estimate for f hat cusp}, we have 
\begin{multline*}
\biggl|\ave\hat f(\Delta(a_t\rot_\theta) g\Gamma)\xi(\theta)\diff\!\theta-\ave\xi\diff\!\theta\int_X\hat f_R\diff\!m_X\biggr|\\
= \int_{\mathcal C} \hat f_{\rm sp}(\Delta(a_t\rot_\theta) g\Gamma)\xi(\theta)\diff\!\theta+ O(\Sob(f)\Sob(\xi)e^{-\eta^2 t/A^3})
\end{multline*}
where $\mathcal C=\{\theta: \hat f_{\rm sp}(\Delta(a_t\rot_\theta)> e^{\eta t}\}$.

This completes the proof if we let $\delta_1=\eta$ and $\delta_2=\eta^3/A^3$.   
\end{proof}

\begin{proof}[Proof of the Sublemma]
Let $x=(h_1,h_2)\Gamma'$ be so that $Hx$ is periodic. 
In view of (the by now standard) non-divergence results, we may assume $\norm{h_i}\ll 1$ where the implied constant is absolute, see e.g.~\cite[\S3]{LM-PolyDensity}.

Since $\Gamma'$ is a finite index subgroup of $\SL_2(\Z)\times \SL_2(\Z)$, we conclude
\[
\{(h,h): h\in \SL_2(\R)\}\bigcap \Bigl(h_1\SL_2(\Z)h_1^{-1}\Bigr)\times \Bigl( h_2\SL_2(\Z)h_2^{-1}\Bigr)
\]
is a lattice in $\{(h,h): h\in \SL_2(\R)\}$. This implies that  $h_1\SL_2(\Z)h_1^{-1}$ and 
$h_2\SL_2(\Z)h_2^{-1}$ are commensurable. Hence, $h_2^{-1}h_1$ belongs to the image of $\GL_2^+(\Q)$ in $\SL_2(\R)$, i.e., the commensurator of $\SL_2(\Z)$ in $\SL_2(\R)$. 

Let $Q'\in\Mat_2(\Z)$ be so that $h_2^{-1}h_1={\lambda}Q'$,
where $\lambda=(\det Q')^{1/2}$. Since $\norm{h_i}\ll 1$, we have  
\be\label{eq: norm Q' vol Hx}
\norm{Q'}^{A_2}\ll \vol(Hx)\ll\norm{Q'}^{A_3},
\ee
where $A_2\leq 1\leq A_3$ and the implied constants are absolute, 
see e.g.~\cite[Lemma 16.2]{LMW22}.

We will show the sublemma holds with $A_1=4DA/A_2$. 
Assume now contrary to our claim in the sublemmsa that 
$\vol(Hx)\leq e^{\rho t/A_1}$, for some $A_1$ which will be determined later, and that $d_X(g\Gamma', x)\leq e^{-t/2}$.  

Thus $g_1=\epsilon_1 h_1\gamma_1$ and $g_2=\epsilon_2 h_2\gamma_2$ where $\norm{\epsilon_i}\ll e^{-t/2}$ and $(\gamma_1,\gamma_2)\in\Gamma'$. 
Since $\norm{h_i}\ll 1$, we conclude $\norm{\gamma_i}\ll \norm{g_i}$. Moreover, we have 
\be\label{eq: g2g1 h2h1}
g_2^{-1}g_1= \epsilon \gamma_2^{-1}h_2^{-1} h_1\gamma_1
\ee
where $\norm{\epsilon}\ll e^{-t/2}$ and the implied constants depend on $\norm{g_i}$. Put $Q=\gamma_2^{-1}Q'\gamma_1$. Then 
\[
\norm{Q}\ll\norm{Q'}\ll e^{\rho t/A_1A_2}\leq e^{\rho t/A}
\]
where we used~\eqref{eq: norm Q' vol Hx}, $\vol(Hx)\ll e^{\rho t/A_1}$ and assumed $t$ is large. Moreover, using~\eqref{eq: g2g1 h2h1} and~\eqref{eq: norm Q' vol Hx}, we conclude that 
\be\label{eq: g2g1 and h2h1 gives Q}
\norm{g_2^{-1}g_1-{\lambda}Q}\ll e^{-t/2}\norm{Q'}\ll e^{-t/2}\cdot e^{\rho t/ (A_1A_2)}
\ee
where the implied constants depend on $\norm{g_i}$. 

Assuming $t$ is large enough to account for the implied constant and using $A_1=4DA/A_2$, the left side of~\eqref{eq: g2g1 and h2h1 gives Q} is $<e^{-\rho t}$. Thus~\eqref{eq: g2g1 and h2h1 gives Q} contradicts~\eqref{eq: far from rational proof stronger} and finishes the proof of the theorem.   
\end{proof}

\bibliographystyle{halpha}
\bibliography{papers}

\end{document}